\theoremstyle{plain}
\newtheorem{Theorem}{Theorem}[section]
\newtheorem{Lemma}{Lemma}[section]
\newtheorem{Proposition}{Proposition}[section]
\newtheorem{Corollary}{Corollary}[section]
\newtheorem{Definition}{Definition}[section]
\newtheorem{TheoremA} {Theorem}
\theoremstyle{remark}
\newtheorem{remark}{Remark}
\numberwithin{equation}{section}
\numberwithin{figure}{section}
\numberwithin{remark}{section}
\begin{document}
\begin{sloppypar}

	\title{Regularity of solutions to degenerate normalized $p$-Laplacian equation with general variable exponents}
	
	\author{Jiangwen Wang}
	\address{School of Mathematics and Shing-Tung Yau Center of Southeast University, Southeast University, Nanjing 211189, P.R. China}
	\email{jiangwen\_wang@seu.edu.cn} 	

    \author{Yunwen Yin}
    \address{School of Mathematics, Southeast University, Nanjing 211189, P.R. China}
    \email{yunwenyin@seu.edu.cn}

	\author{Feida Jiang$^*$}
	\address{School of Mathematics and Shing-Tung Yau Center of Southeast University, Southeast University, Nanjing 211189, P.R. China}
	\email{jiangfeida@seu.edu.cn}	
	
	\subjclass[2010]{35A01; 35D40; 35J70; 35J92}

	%\communicated{}
	\date{\today}
	\thanks{*corresponding author}
	
		\keywords{normalized $p$-Laplacian operator, degenerate elliptic equation, viscosity solutions, free transmission}

	\begin{abstract}
	In this paper, we consider degenerate quasilinear elliptic models of normalized $ p$-Laplacian type. We establish local $C^{1,\alpha'}$ regularity of viscosity solutions by making use of the compactness argument, scaling techniques and the localized oscillating method. In addition, we also obtain almost optimal pointwise $C^{1,\tau} $ regularity for degenerate free transmission problem related to normalized $ p$-Laplacian. Our argument is based on a new improved oscillation-type estimate combined with a localized analysis.
	\end{abstract}

	\maketitle		
%\tableofcontents

\section{Introduction}
In this paper, we are concerned with local regularity properties for solutions of the degenerate normalized $ p $-Laplacian equation with general variable exponents
\begin{align}\label{intro:eq1}
-\bigg\{|Du|^{\alpha(x,u)}+a(x)|Du|^{\beta(x,u)}\bigg\}\Delta_{p}^{\rm{N}} u =f(x) \ \text{in} \ B_{1}\subset \mathbb{R}^{n},
\end{align}
where $ n\geq 2$, $ 0 \leq a(x)\in C(B_{1}) $, $ 0 <a_{1} \leq \alpha(x,u) \leq \beta(x,u)\leq a_{2} <\infty $, $ 1 <p<\infty $ and $ \Delta_{p}^{\rm{N}} $ denotes the normalized $ p$-Laplacian operator given by
\begin{equation}\label{introd:eq1}
\Delta_{p}^{\rm{N}} u:= \Delta u +(p-2)\bigg \langle D^{2}u\frac{Du}{|Du|},\frac{Du}{|Du|} \bigg \rangle .
 \end{equation}

The regularity for solutions to the equation \eqref{intro:eq1} has attracted much attention along the last years due to intrinsic with Tug-of-War games, as well as related topics like double-phase models in divergence form. We shall first list some of the regularity results for equation \eqref{intro:eq1} in different situations of $\alpha(x,u)$ and $a(x)$ as follows.

When $ \alpha(x,u):=p-2, a(x)=0 $, equation \eqref{intro:eq1} corresponds to the classical $ p$-Laplacian equation
$$ -\Delta_{p}u:=-\text{div}(|Du|^{p-2}Du)=f(x) \ \ \text{in} \ B_{1},  $$
whose regularity of gradient was studied in \cite{KU77, W94}.

When $ \alpha(x,u):=0, a(x)=0 $, equation \eqref{intro:eq1} corresponds to the normalized $ p$-Laplacian equation $ -\Delta_{p}^{\rm{N}} u=f $. This type of equation is closely related to the stochastic tug-of-war games \cite{PS08,MP10,BG15} and the image processing problems \cite{Does}. Interior H\"{o}lder gradient regularity for viscosity solutions to this equation was analyzed by Attouchi et al in \cite{AME17} by using an improvement-of-flatness approach. Moreover, in the case when $ p>2 $, under a weaker norm of $ f $, the authors also got the same regularity result in \cite{AME17} employing divergence form theory and nonlinear potential theory. Later, Banerjee and Munive \cite{BM20} proved gradient continuity estimates for viscosity solutions of $ -\Delta_{p}^{\rm{N}} u=f $ in terms of the scaling critical $ L(n,1) $-norm of $ f $(a Lorentz space), which improved the regularity result in \cite{AME17}.

When $\alpha(x,u):=\gamma>-1 ,a(x)=0 $, equation \eqref{intro:eq1} is related to the degenerate or singular normalized $ p$-Laplacian equation
\begin{equation}\label{intro:eq2}
   -|Du|^{\gamma}\Delta_{p}^{\rm{N}}u:=-|Du|^{\gamma}\bigg(\Delta u +(p-2)\bigg \langle D^{2}u\frac{Du}{|Du|},\frac{Du}{|Du|} \bigg \rangle \bigg)=f \ \text{in} \ B_{1} .
\end{equation}
In the restricted case $ p\geq 2 $ and $ -1<\gamma \leq 0 $, Birindelli and Demengel \cite{IF10} showed the local H\"{o}lder regularity of the gradient for solutions to \eqref{intro:eq2} by using  approximations and a fixed point argument. Later, the result was extended to the full range $ \gamma >-1 $ and $ p>1 $ in \cite{AE18}. For a fully nonlinear operator instead of $ \Delta_{p}^{\rm{N}}  $ in \eqref{intro:eq1}, the first H\"{o}lder gradient regularity for a viscosity solution $ u $ of
\begin{equation*}\label{intro:eq3}
 |Du|^{\gamma}F(D^{2}u) = f(x)
\end{equation*}
was established in \cite{LS13} when $\gamma >0 $ and $ F $ is a uniformly elliptic operator. Since then, there are many papers concerning fully nonlinear elliptic equations with generalized degeneracy or singularity, we refer to \cite{De21, YRZ21, SSKS221, AEGE21} for results on interior regularity, \cite{DV21, DRRV23, 2DV21} for free boundary problems, and \cite{BDR23} for singularly perturbed problem. It should be noted that $ \Delta_{p}^{\rm{N}} u $ is, in general, discontinuity at the set $ \{Du=0\}$, and therefore the results of \cite{De21, YRZ21, SSKS221, AEGE21} cannot be applied directly to the operator $ \Delta_{p}^{\rm{N}}  $.

Motivated by the results in \cite{AE18, LS13, De21, YRZ21, SSKS221, AEGE21}, we naturally consider the degenerate normalized $ p$-Laplacian equation with general variable exponents of the form \eqref{intro:eq1}. To the best of our knowledge, gradient H\"{o}lder regularity of viscosity solutions for \eqref{intro:eq1} is unknown in the current literature. By making use of the compactness argument, scaling techniques and the localized oscillating method, we show that the viscosity solutions for \eqref{intro:eq1} are locally of class $ C^{1,\alpha'}(B_{1}) $ in this paper.

Before stating our main result, we make some basic assumptions. We first assume that
\begin{equation}\label{pre:eq1}
1<p<\infty .
\end{equation}
Concerning the nonhomogeneous degenerate terms in (\ref{intro:eq1}), we shall require that the general variable exponents $ \alpha(x,u), \beta(x,u)$ fulfill
\begin{equation}\label{pre:eq2}
0 <a_{1} \leq \alpha(x,u) \leq \beta(x,u)\leq a_{2} <\infty
\end{equation}
for some positive constants $a_1$ and $a_2$, and the modulating coefficient $ a(x) $ satisfies
\begin{equation}\label{pre:eq3}
0 \leq a(x)\in C(B_{1})
\end{equation}
and the source term $ f $ satisfies
\begin{equation}\label{pre:eq4}
f \in C(B_{1}) \cap L^{\infty}(B_{1}).
\end{equation}

Now we state our first main result.

\begin{Theorem}(Local $ C^{1, \alpha'}$ regularity)
\label{thm1}
Under the assumptions \eqref{pre:eq1}--\eqref{pre:eq4}, suppose that $ u $ is a viscosity solution to \eqref{intro:eq1}, then there exists $ \alpha' =\alpha'(p,n,a_{1},a_{2})\in \big(0,\frac{1}{1+a_{2}}\big] $ such that for any ball $ B \subset\subset B_{1} $, we have $ u \in C^{1,\alpha'}(B) $ with the estimate
\begin{equation}
\label{eq4}
||u||_{C^{1,\alpha'}(B)} \leq C\bigg( ||u||_{L^{\infty}(B_{1})}+\max \{||f||_{L^{\infty}(B_{1})}, ||f||_{L^{\infty}(B_{1})}^{\frac{1}{1+a_{1}}}, ||f||_{L^{\infty}(B_{1})}^{\frac{1}{1+a_{2}}} \} \bigg),
\end{equation}
where $ C $ is a positive constant depending only on $ n, p, a_{1}$, $a_{2} $ and $  \text{dist}(B, \partial B_{1})^{-1-\alpha'} $.
\end{Theorem}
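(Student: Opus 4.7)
The plan is to prove Theorem \ref{thm1} by a standard \textbf{compactness plus improvement-of-flatness iteration}, adapted to the doubly degenerate structure with variable exponents. The two structural constraints that control everything are \eqref{pre:eq2} (so the exponents stay in $[a_1,a_2]$ after any freezing) and the upper bound $\alpha'\le 1/(1+a_2)$ in the statement (which is exactly what makes the rescaled equation retain the same form).

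First I would normalize. Replacing $u$ by $u/K$ with $K:=\|u\|_{L^\infty(B_1)}+\varepsilon^{-1}\|f\|_{L^\infty(B_1)}+1$ for a small $\varepsilon>0$ chosen below, and using that powers of $K$ appearing from the factors $|D(u/K)|^{\alpha(x,u)}$ are bounded thanks to \eqref{pre:eq2}, I may assume $\mathrm{osc}\, u\le 1$ and $\|f\|_{L^\infty}\le\varepsilon$ while the equation retains the form \eqref{intro:eq1}. Next I would prove an \emph{approximation lemma}: for every $\delta>0$ there exists $\varepsilon>0$ such that any viscosity solution of \eqref{intro:eq1} with $\|u\|_{L^\infty(B_1)}\le 1$ and $\|f\|_{L^\infty(B_1)}\le\varepsilon$ is $\delta$-close in $L^\infty(B_{1/2})$ to a viscosity solution $h$ of $-\Delta_p^N h=0$. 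The proof is by contradiction: a sequence of counterexamples $u_k$ would be equicontinuous (by the a priori $C^{0,\gamma}$ estimate available for such degenerate-$\Delta_p^N$ equations, as in \cite{IF10, AE18}), hence, up to a subsequence, $u_k\to u_\infty$ uniformly. The limit solves $-\Delta_p^N u_\infty=0$ by stability of viscosity solutions, the only subtle point being that the coefficient $|Du|^{\alpha}+a|Du|^{\beta}$ is bounded away from zero on any test function with non-vanishing gradient, which is all that matters for the viscosity formulation. Now the Hölder gradient regularity for the normalized $p$-Laplacian established in \cite{AME17, BM20} furnishes $\alpha_0\in(0,1)$ and an affine $\ell$ with $\|h-\ell\|_{L^\infty(B_\rho)}\le C\rho^{1+\alpha_0}$; choosing $\rho$ small, then $\delta$ small, converts this into
\begin{equation*}
\sup_{B_\rho}|u-\ell|\le \rho^{1+\alpha'},\qquad |\ell(0)|+|D\ell|\le C_0,
\end{equation*}
for any $\alpha'\in(0,\min\{\alpha_0,1/(1+a_2)\}]$.

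The iterative step rescales via $u_1(x):=\rho^{-(1+\alpha')}[u(\rho x)-\ell(\rho x)]$. A direct computation shows $u_1$ solves an equation of exactly the form \eqref{intro:eq1} with new exponents $\tilde\alpha(x,s):=\alpha(\rho x,\rho^{1+\alpha'}s+\ell(\rho x))$ and $\tilde\beta$ defined analogously (this is the ``localized oscillating'' part: the exponents must be frozen along the iterate), a new coefficient $\tilde a(x)=a(\rho x)\rho^{\alpha'(\tilde\beta-\tilde\alpha)}\ge 0$, and a new source
\begin{equation*}
\tilde f(x)=\rho^{1-\alpha'+\alpha'\tilde\alpha(x,u_1(x))}\,f(\rho x).
\end{equation*}
The constraint $\alpha'(1+a_2)\le 1$ guarantees that the $\rho$-power on $\tilde f$ is $\le 0$-safe so that $\|\tilde f\|_{L^\infty}\le \|f\|_{L^\infty}\le \varepsilon$; the hypotheses \eqref{pre:eq1}--\eqref{pre:eq4} and \eqref{pre:eq2} are preserved with the same constants $a_1,a_2$. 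Applying the approximation/flatness step to $u_1$ and iterating produces a Cauchy sequence of affine functions $\ell_k$ satisfying $\|u-\ell_k\|_{L^\infty(B_{\rho^k})}\le \rho^{k(1+\alpha')}$, which yields pointwise $C^{1,\alpha'}$ at the chosen center with the claimed quantitative bound; a covering and translation argument then produces the uniform estimate \eqref{eq4} on any $B\subset\subset B_1$.

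The main obstacle I anticipate is the degeneracy at $\{Du=0\}$ combined with the fact that the exponent in the prefactor depends on $u$ itself. This is what forces the ``localized'' aspect: because $\tilde\alpha,\tilde\beta$ depend on $u_1$ (through $\ell$ and through the rescaling), one must verify that the smallness thresholds $\varepsilon,\delta,\rho$ chosen at the first step remain valid at every subsequent step, uniformly in $k$. This is precisely what \eqref{pre:eq2} ensures, but tracking the dependencies carefully through the scaling of $\tilde f$ and the $\rho^{\alpha'(\tilde\beta-\tilde\alpha)}$ factor attached to $\tilde a$ is the delicate bookkeeping on which the whole scheme rests. Verifying that $u_1$ is still an admissible viscosity solution (as opposed to merely a formal computation) requires a touching-function argument that accounts for the possibility $Du=0$; this is where the approximation lemma's robustness, rather than strict uniform ellipticity, is essential.
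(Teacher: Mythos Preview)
Your overall strategy is reasonable, but the iteration step contains a concrete error that breaks the scheme as written. When you rescale via $u_1(x)=\rho^{-(1+\alpha')}[u(\rho x)-\ell(\rho x)]$ with $\ell$ affine and $q:=D\ell$, a direct computation gives $Du(\rho x)=\rho^{\alpha'}Du_1(x)+q$, so the degeneracy prefactor becomes $|Du_1+\xi|^{\tilde\alpha}+\tilde a|Du_1+\xi|^{\tilde\beta}$ with $\xi:=\rho^{-\alpha'}q$, \emph{not} $|Du_1|^{\tilde\alpha}+\tilde a|Du_1|^{\tilde\beta}$. Thus $u_1$ does not solve an equation ``of exactly the form \eqref{intro:eq1}''; it solves a $\xi$-perturbed version, and $|\xi_k|$ may blow up like $\rho^{-k\alpha'}$ along the iteration. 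Consequently your approximation lemma and your equicontinuity input must be proved for the $\xi$-perturbed equation, \emph{uniformly in $\xi\in\mathbb{R}^n$}, and the limiting profile depends on whether $\xi_k$ stays bounded or not. The references you cite for $C^{0,\gamma}$ compactness do not supply this uniformly-in-$\xi$ estimate for the present structure.

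The paper addresses exactly this point, but by a different route: it first observes that any viscosity solution of \eqref{intro:eq1} is a viscosity subsolution of the $\min$-equation \eqref{1.10} and a supersolution of the $\max$-equation \eqref{1.11}, where the exponents are the \emph{constants} $a_1,a_2$. This eliminates the $(x,u)$-dependence of $\alpha,\beta$ altogether and reduces matters to proving $C^{1,\alpha'}$ for solutions of the pair \eqref{1.10}--\eqref{1.11} (Theorem~\ref{pro2}). For that pair, the $\xi$-perturbed equations \eqref{loc:eq3}--\eqref{loc:eq4} are handled via an Ishii--Lions argument split into $|\xi|>A_0$ (Lipschitz) and $|\xi|\le A_0$ (H\"older), giving compactness uniform in $\xi$; the improvement-of-flatness lemma then treats the bounded-$\xi$ and unbounded-$\xi$ limits separately. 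If you want to repair your direct approach, you must incorporate the $\xi$-shift into every step and prove the corresponding uniform-in-$\xi$ compactness; the min/max reformulation is what the paper uses to avoid tracking the $(x,u)$-dependent exponents through the iteration.
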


We remark that in Theorem \ref{thm1} we only require that $\alpha(x,u)$ and $\beta(x,u)$ have positive lower and upper bounds. Due to the generality of degeneracy term $ \alpha(x,u) $ and $ \beta(x,u) $, our main result Theorem \ref{thm1} above embraces the partial regularity results previously obtained in \cite[Theorem 1.1]{AE18}.

Note that this generalization of partial regularity result from \cite{AE18} to Theorem \ref{thm1} here is nontrivial. Indeed, due to the abstract form of $ \alpha(x,u) $ and $ \beta(x,u)$, implementing the strategy from \cite{AE18} becomes a delicate task. In addition, equation \eqref{intro:eq1} is no longer homogeneous caused by the presence of different and variable gradient power, which makes the scaling process more tricky. In contrast with the single power degeneracy case \cite{AE18, LS13}, the quantities involving the gradient variable on the left hand side of \eqref{intro:eq1} are not identically preserved after scaling. Furthermore, although such the operator $ \Delta_{p}^{\rm{N}} $ has a uniformly elliptic structure (see Remark \ref{rk6}), the strategy of \cite{LS13} cannot be applied directly to (\ref{intro:eq1}) since the discontinuity at the set $ \{Du=0\} $. This challenge necessitates the development of a new technique to address the problem effectively.

To overcome these difficulties, we adopt the idea developed in \cite{DJ22}. More precisely, we shall consider the existence and regularity of viscosity solutions to a Dirichlet problem associated with the anisotropic free transmission problems
\begin{align}
\label{eq6}
 \left\{
     \begin{aligned}
     & -\bigg\{ |Du|^{a_{1}\chi_{\{u>0\}}+a_{2}\chi_{\{u<0\}}}+a(x)\chi_{\{u>0\}}|Du|^{a_{1}}
     \\
      & \qquad   \qquad   \qquad  \qquad   \qquad   \qquad   +a(x)\chi_{\{u<0\}}|Du|^{a_{2}}\bigg\}\Delta_{p}^{\rm{N}}u=f(x) \qquad \ \text{in} \ \ B_{1}         \\
     & u=g  \qquad \qquad \qquad \qquad \qquad \qquad \qquad \qquad \qquad \qquad \qquad \qquad \qquad \ \   \text{on}  \  \partial B_{1}                  ,          \\
     \end{aligned}
     \right.
\end{align}
where $ a_{1}, a_{2} $ are nonnegative constants, $ 0\leq a(x) \in C(B_{1}) $, $f(x) \in C(B_{1})\cap L^{\infty}(B_{1}) $ and $ g \in C(\partial B_{1})  $. The existence of solutions to problem \eqref{eq6} can be accomplished via Perron's method and fixed point argument, see for instance \cite{De22, HPRS21, PS22}. Once this matter has been settled, the regularity for solution obtained above relies on a key observation, which is that the viscosity solution $ u\in C(B_{1}) $ of \eqref{eq6} turns out to be a viscosity sub-solution and viscosity super-solution to
\begin{align}\label{1.10}
\begin{split}
\min_{i=0,1,2} \big  \{-|Du|^{a_{i}}\Delta_{p}^{\rm{N}}u \big \}  = ||f||_{L^{\infty}(B_{1})}
\end{split}
\end{align}
and
\begin{align}\label{1.11}
\begin{split}
\max_{i=0,1,2} \big \{ -|Du|^{a_{i}}\Delta_{p}^{\rm{N}}u \big \}  =-||f||_{L^{\infty}(B_{1})},
\end{split}
\end{align}
respectively, where $ a_{0} = 0 $.
Under these two viscosity inequalities \eqref{1.10} and \eqref{1.11}, we combine the arguments established in \cite{De22, AE18} and recent work of \cite{HPRS21} to get $ C^{1,\alpha'} $ regularity of solution $ u $ obtained above.

To be more precise, our goal is to show the graph of $ u $ can be approximated by an affine function with an error bounded by $ Cr^{1+\alpha'} $ in any ball of radius $ r $. For this purpose, we first show that H\"{o}lder regularity of solution $ u $ for perturbed equation
\begin{align}\label{Intro:eqN}
\begin{split}
\min \bigg \{-\Delta_{p,\xi}^{\rm{N}}u, -|Du+\xi|^{a_{1}}\Delta_{p,\xi}^{\rm{N}}u,
-|Du+\xi|^{a_{2}} \Delta_{p,\xi}^{\rm{N}}u \bigg \} = ||f||_{L^{\infty}(B_{1})}
\end{split}
\end{align}
and
\begin{align}\label{Intro:eqM}
\begin{split}
\max \bigg \{-\Delta_{p,\xi}^{\rm{N}}u, -|Du+\xi|^{a_{1}}\Delta_{p,\xi}^{\rm{N}}u,  -|Du+\xi|^{a_{2}}\Delta_{p,\xi}^{\rm{N}}u \bigg \} = -||f||_{L^{\infty}(B_{1})},
\end{split}
\end{align}
where $ \xi $ is an arbitrary vector in $ \mathbb{R}^{n} $ and
\begin{equation*}
\Delta_{p,\xi}^{\rm{N}}u:= \Delta u + (p-2)\bigg\langle D^{2}u\frac{Du+\xi}{|Du+\xi|},\frac{Du+\xi}{|Du+\xi|}\bigg\rangle, \ \ 1<p<\infty .
\end{equation*}
This estimate can be carried out using the method introduced by Ishii and Lions \cite{HL90} for large/small scopes. Then, resorting to compactness result, which, together with the regularity of the limiting solutions (see Lemma \ref{lem4.21}) to show improvement of flatness for solutions of \eqref{Intro:eqN} and \eqref{Intro:eqM} in Lemma \ref{lem4.22}. Next we prove $ C^{1,\alpha'}$ regularity of solutions to \eqref{1.10} and \eqref{1.11} via iteration manner and Lemma \ref{lem4.22}. In the end, $ C^{1,\alpha'} $ regularity for solutions to \eqref{intro:eq1} is an outcome of Theorem \ref{pro2}, see Section \ref{Section 5}.

It can be seen from our Theorem \ref{thm1} and the previous results \cite{De21, YRZ21, AE18, AEGE21, LS13, ART15} that there is an intrinsic dependence between the obtained regularity and the rate of degeneracy, see Table \ref{Table1} below, for details.
\begin{table}[!htp]\label{Table1}
   \centering
   {
    \linespread{1.0}  \selectfont
    \caption{An intrinsic dependence between the obtained regularity and the rate of degeneracy. Here $ \alpha_{0} $ is the optimal H\"{o}lder exponent for solutions to $ F$ harmonic function.}
   \begin{tabular}{||c|c|c||}
        \hline
        Degenerate equation & Regularity of $ u $ & Reference  \\
        \hline
        \multirow{2}{*}{$ |Du|^{\gamma} F(D^{2}u) =f $} & $ C^{1,\alpha}_{loc}, \ 0< \alpha \leq \frac{1}{1+\gamma}$ &  \cite[Theorem 1]{LS13}  \\
        %\cline{2-3}
         & $ C^{1,\alpha}_{loc}, \ \alpha = \min \big\{ \alpha_{0}^{-}, \frac{1}{1+\gamma}   \big\}$ &  \cite[Theorem 3.1]{ART15}  \\
        \hline
        $ \big\{|Du|^{\gamma_{1}} + a(x) |Du|^{\gamma_{2}} \big\}F(D^{2}u) =f $ & $ C^{1,\alpha}_{loc}, \ 0< \alpha \leq \frac{1}{1+\gamma_{1}}$ & \cite[Theorem 1]{De21} \\
        \hline
        $ |Du|^{\gamma(x)} F(D^{2}u) = f $  & $ C^{1,\alpha}_{loc}, \ \alpha = \min \big\{ \alpha_{0}^{-}, \frac{1}{1+||\gamma||_{\infty}}   \big\} $ &  \cite[Theorem 1.2]{AEGE21}  \\
         \hline
         $ \big\{|Du|^{\gamma_{1}(x)} + a(x) |Du|^{\gamma_{2}(x)} \big\}F(D^{2}u) =f $ & $ C^{1,\alpha}_{loc}, \ \alpha = \min \big\{ \alpha_{0}^{-}, \frac{1}{1+||\gamma_{1}||_{\infty}}   \big\} $  & \cite[Theorem 1.1]{YRZ21} \\
         \hline
         $ |Du|^{\gamma} \Delta_{p}^{\rm{N}} u =f $ & $ C^{1,\alpha}_{loc}, \ \ 0< \alpha \leq \frac{1}{1+\gamma} $  &  \cite[Theorem 1.1]{AE18}  \\
         \hline
         $ -\big\{|Du|^{\alpha(x,u)}+a(x)|Du|^{\beta(x,u)}\big\}\Delta_{p}^{\rm{N}} u =f(x)$  &  $ C^{1,\alpha'}_{loc}, \ \ 0< \alpha' \leq \frac{1}{1+a_{2}}  $  &  Theorem \ref{thm1}  \\
         \hline
   \end{tabular}
   }
\end{table}

Therefore, if this rate is varied over the domain, it is natural to expect regularity results that vary over the domain as well. The first breakthrough in this direction came up in the work of Jesus in \cite{DJ22}, where he showed pointwise optimal $ C^{1,\alpha} $ regularity for degenerate fully nonlinear equation. More precisely, he showed the following (see Theorem 2 in \cite{DJ22}):

\begin{TheoremA}(\cite[Theorem 2]{DJ22})\label{extra:thm1}
Let $ u \in C(B_{1}) $ be a viscosity solution to
\begin{equation*}
  -|Du|^{\beta(x,u,Du)}F(D^{2}u)=f(x) \ \ \text{in} \ \ B_{1},
\end{equation*}
where $ \beta(x,u,Du)= \sum_{i=0}^{N}\beta_{i}(x)\chi_{G_{i}(u,Du)}(x) $, $ \{G_{i}(u,Du)\}_{i=0}^{N} $ are disjoint sets in $ B_{1}$, $ \{\beta_{i}(x)\}_{i=0}^{N} $ are uniformly bounded from above and below, $ F $ is a uniformly elliptic operator, and $ f \in C(B_{1}) \cap L^{\infty}(B_{1})$. Assume also $ \{\beta_{i}(x)\}_{i=0}^{N} $ have modulus of continuity $ \omega $ satisfying
\begin{equation*}
  \limsup_{t\rightarrow 0} \ln(t^{-1}) \omega(t) =0.
\end{equation*}
Then for every $ x_{0} \in B_{1/2}$, it holds $ u $ is $C^{1,\alpha}(x_{0}) $, and
\begin{equation*}
  ||u||_{C^{1,\alpha}(x_{0})} \leq C \big( ||u||_{L^{\infty}(B_{1})} + ||f||_{L^{\infty}(B_{1})}   \big),
\end{equation*}
where
\begin{equation*}
  \alpha = \min_{i=0,1,\cdots,N} \bigg\{\alpha_{0}^{-}, \frac{1}{1+\beta_{i}(x_{0})}     \bigg\}.
\end{equation*}
\end{TheoremA}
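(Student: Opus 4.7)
The plan is to prove the pointwise $C^{1,\alpha}$ estimate at each $x_0\in B_{1/2}$ by an iterated flatness-improvement scheme tuned to the pointwise exponent $\alpha:=\min\{\alpha_0^-,1/(1+\bar\beta)\}$ with $\bar\beta:=\max_{i}\beta_i(x_0)$. Concretely, I would construct $\rho\in(0,1/2)$ and affine functions $\ell_k(x)=a_k+b_k\cdot(x-x_0)$ with
\[ \sup_{B_{\rho^k}(x_0)}|u-\ell_k|\le C\rho^{k(1+\alpha)}, \qquad |b_{k+1}-b_k|\le C\rho^{k\alpha}, \]
from which the pointwise estimate follows by telescoping.

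The first ingredient is an approximation lemma: if $\|u\|_\infty\le 1$, $\|f\|_\infty\le\varepsilon$ and $\operatorname{osc}_{B_1}\beta_i\le\varepsilon$ for every $i$, then $u$ is $\eta$-close in $B_{1/2}$ to some $F$-harmonic function. I would argue by compactness: a hypothetical violating sequence $u_j$ has a uniform limit $u_\infty$, and by testing with smooth functions touching $u_\infty$ (using the standard quadratic-perturbation trick at critical test points to handle the degenerate set $\{Du=0\}$) one sees that $u_\infty$ solves $F(D^2 u_\infty)=0$ in the viscosity sense. Since $F$-harmonic functions are $C^{1,\alpha_0^-}$, choosing $\rho$ small enough gives the first affine correction $\ell_1$. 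The induction step scales to
\[ v_k(y):=\rho^{-k(1+\alpha)}\left[u(x_0+\rho^k y)-\ell_k(x_0+\rho^k y)\right], \]
which solves an equation of the same structure with $F$ replaced by its translate $F_{\xi_k}$ at $\xi_k:=\rho^{-k\alpha}b_k$ (the same device appearing in \eqref{Intro:eqN}--\eqref{Intro:eqM}), a rescaled source of size $\rho^{k(1-\alpha(1+\bar\beta))}\|f\|_\infty\le\|f\|_\infty$ since $\alpha(1+\bar\beta)\le 1$, and exponents $\tilde\beta_i^k(y)=\beta_i(x_0+\rho^k y)$ with oscillation at most $\omega(\rho^k)$. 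The log-Dini hypothesis $\ln(\rho^{-k})\omega(\rho^k)\to 0$ is precisely what guarantees that the approximation lemma applies uniformly at every scale, closing the induction.

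The main obstacle, where I expect the bulk of the technical work to sit, is the indicator decomposition $\beta(x,u,Du)=\sum_i\beta_i(x)\mathbb{I}_{G_i(u,Du)}(x)$: the sets $G_i$ depend nonlinearly on $u$ and $Du$ and deform in an uncontrolled way under the rescaling above, while several branches may coexist in every neighborhood of $x_0$. My remedy would be to avoid committing to a single branch and instead sandwich the equation between the minimum and the maximum over the branch operators, in direct analogy with the passage from \eqref{eq6} to \eqref{1.10}--\eqref{1.11}. This forces only the worst-case exponent $\bar\beta$ to enter the scaling, which explains why the pointwise regularity is governed by $\min_i 1/(1+\beta_i(x_0))$, and it allows the approximation lemma to be proved uniformly in the choice of branch.
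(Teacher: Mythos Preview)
This statement is not proved in the paper: Theorem~A is quoted verbatim from \cite[Theorem 2]{DJ22} as background and motivation for the paper's own Theorem~\ref{thm2}. The paper's only comment on its proof is the single line ``the author adapted a strategy (the approximation method and iteration argument) from \cite{ART15, AEGE21, YRZ21, SSKS221}'', and the paper then pointedly develops a \emph{different} route---the improved oscillation estimate of Proposition~\ref{prop6.3} combined with a case split on the size of $|Du(x_0)|$---for its analogous Theorem~\ref{thm2}.

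Your outline follows the original \cite{DJ22} scheme rather than the paper's new one, and as a sketch it is sound: approximation by $F$-harmonic functions via compactness, one step of flatness improvement, then iterate after the min/max sandwich over branches. Two technical points deserve care. First, the rescaled source is not literally $\rho^{k(1-\alpha(1+\bar\beta))}\|f\|_\infty$: the exponent actually appearing is $\tilde\beta_i^k(y)=\beta_i(x_0+\rho^k y)$, so one obtains
\[
\rho^{k(1-\alpha(1+\tilde\beta_i^k))}\|f\|_\infty\le \rho^{-k\alpha\,\omega(\rho^k)}\|f\|_\infty,
\]
and it is precisely the log-Dini hypothesis (via $k\,\omega(\rho^k)\to 0$, cf.\ Remark~\ref{rk3}) that keeps this uniformly bounded---not merely the inequality $\alpha(1+\bar\beta)\le 1$. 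Second, the compactness step in your approximation lemma presupposes a uniform (in the slope $\xi$) H\"older estimate for solutions of the perturbed min/max inequalities; this is the analogue of the Ishii--Lions argument in Section~\ref{Section 4.1} and must be supplied separately. Also, since $F$ here depends only on $D^2u$, your phrase ``$F$ replaced by its translate $F_{\xi_k}$'' is superfluous---only the degeneracy prefactor $|Dv_k+\xi_k|^{\tilde\beta}$ changes under rescaling.
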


The model in Theorem \ref{extra:thm1} being studied is a diffusion process, which degenerates as a power of the gradient. The degeneracy law depends on the division of regions, which is discontinuous along $ G_{0}$.
This is a typical {\it transmission problem} describing the diffusion process within heterogeneous media. Applications include thermal and electromagnetic conductivity as well as composite materials, such as fiber-reinforced structures.
We refer the readers to \cite{B10} a more detailed exploration of this topic.

In the proof of Theorem \ref{extra:thm1}, the author adapted a strategy (the approximation method and iteration argument) from \cite{ART15, AEGE21, YRZ21, SSKS221}. Motivated by \cite{DJ22}, in this paper, we will establish a similar pointwise estimate as in Theorem \ref{extra:thm1} above for equation \eqref{intro:eq1}. It is noteworthy to mention that our approach is based on a new improved oscillation-type estimate combined with a localized analysis, which is totally different from the approach in Theorem \ref{extra:thm1}.

In order to state our second main results, we first make some essential assumptions.

Let $ \Omega_{i}(u) \subset B_{1}, i=1,2,\cdots,M $ be disjoint sets which depend on the solution $ u $, and define $ \Omega_{0}(u) := B_{1} \setminus \cup_{i=1}^{M}\Omega_{i} $. Assume also $ \alpha(x,u), \beta(x,u) $ have the form
\begin{equation}\label{intro:eq115}
\alpha(x,u)=\sum_{i=0}^{M}\alpha_{i}(x)\chi_{\Omega_{i}(u)},  \ \
\beta(x,u)=\sum_{i=0}^{M}\beta_{i}(x)\chi_{\Omega_{i}(u)}
\end{equation}
with
\begin{equation}\label{intro:eq116}
0<a_{1}\leq \alpha_{i}(x) \leq \beta_{i}(x)\leq a_{2} <+\infty,
\end{equation}
where $\chi_{\Omega_i}$ are characteristic functions of $\Omega_i $.
Moreover, we require additional assumptions on $ \alpha_{i}(x) $ and $ \beta_{i}(x),i=0,1,\cdots,M $.
Suppose that there is a non-decreasing function $ \omega: [0,+\infty ) \rightarrow  [0,+\infty ) $ such that
\begin{equation}\label{intro:eq117}
|\alpha_{i}(x)-\alpha_{i}(y)|+|\beta_{i}(x)-\beta_{i}(y)| \leq  \omega (|x-y|), \ i=0,1,\cdots,M
\end{equation}
and $ \omega$ satisfies the balancing condition
\begin{equation}\label{intro:eq118}
\limsup_{t\rightarrow 0} \omega(t) \ln(t^{-1}) =0.
\end{equation}

The second main result of this paper is the following:
\begin{Theorem}(Almost optimal pointwise $ C^{1,\tau} $ regularity)
\label{thm2}
Suppose $ u $ is a bounded viscosity solution of \eqref{intro:eq1} and the assumptions \eqref{intro:eq115}--\eqref{intro:eq118} hold. Then for every $x_{0} \in B_{1/2}  $, we have that $ u $ is $ C^{1,\tau}(x_{0}) $. More precisely, we have
\begin{equation}
\label{eq16}
||u||_{C^{1,\tau}(x_{0})} \leq C \left(1+||u||_{L^{\infty}(B_{1})}+||f||_{L^{\infty}(B_{1})}^{\gamma}\right),
\end{equation}
where
$$ \tau = \min_{i=0,1,\cdots,M} \bigg\{ \widehat{\beta_{0}}^{-}, \frac{1}{1+\alpha_{i}(x_{0})} \bigg\}, \ \ \gamma= \frac{1}{1+\min\limits_{i=0,1,\cdots,M}\inf_{B_{1}}\alpha_{i}(x)} ,$$
and $ C=C(n,p,a_{1},a_{2},\omega)$ and $ \widehat{\beta_{0}} \in (0,1) $ is the constant in the homogeneous case of Lemma \ref{lem2.3}.
\end{Theorem}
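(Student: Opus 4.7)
The plan is to fix $x_0 \in B_{1/2}$ and, after translation, assume $x_0=0$, then construct by induction a sequence of affine functions $\ell_k(x)=a_k + b_k\cdot x$ satisfying
\begin{equation*}
\sup_{B_{\rho^k}}|u-\ell_k| \leq \rho^{k(1+\tau)} \quad \text{and} \quad |a_{k+1}-a_k|+\rho^k|b_{k+1}-b_k| \leq C\rho^{k(1+\tau)}
\end{equation*}
for a fixed dyadic ratio $\rho \in (0,1/2)$. The sequences $a_k,b_k$ would converge, and the limit affine function is the $C^{1,\tau}$-approximant at the origin. The factor $\|f\|_{L^\infty}^{\gamma}$ in the bound \eqref{eq16} enters at an initial normalization step: scaling $v(x)=u(x)/K$ with $K \asymp 1+\|u\|_{L^\infty(B_1)}+\|f\|_{L^\infty(B_1)}^{\gamma}$ and $\gamma = 1/(1+\min_{i,x}\alpha_i(x))$ compensates for the gradient-power degeneracy in \eqref{intro:eq1}, reducing matters to the normalized regime $\|v\|_{L^\infty},\|\tilde f\|_{L^\infty}\leq \varepsilon$, in analogy with the scaling used in Theorem \ref{thm1}.

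The core is an iterative improvement-of-flatness in which, at scale $r_k=\rho^k$, the variable exponents $\alpha_i(x),\beta_i(x)$ are \emph{frozen} at the origin, setting $\alpha_i^{0}:=\alpha_i(0)$ and $\beta_i^{0}:=\beta_i(0)$. The rescaled function $v_k(y)=(u(\rho^k y)-\ell_k(\rho^k y))/\rho^{k(1+\tau)}$ can be shown to satisfy the sub-/super-solution pair \eqref{1.10}-\eqref{1.11} with $\alpha_i^{0},\beta_i^{0}$ replacing the original exponents, modulo a perturbation term. Using the identity $|Dv_k|^{\alpha_i(x)}=|Dv_k|^{\alpha_i^{0}}|Dv_k|^{\alpha_i(x)-\alpha_i^{0}}$ together with the modulus \eqref{intro:eq117}, one would estimate this perturbation, after passing to logarithms, by $C\omega(\rho^k)\log(\rho^{-k})$; the balancing condition \eqref{intro:eq118} ensures it tends to zero as $k\to\infty$. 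Thus for $k$ large enough the hypothesis of the constant-exponent improvement-of-flatness (Lemma \ref{lem4.22}, applied with frozen $\alpha_i^{0},\beta_i^{0}$) is met and produces the next affine $\ell_{k+1}$.

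The exponent $\tau=\min_i\{\widehat{\beta_0}^{-},\,1/(1+\alpha_i(0))\}$ naturally arises from splitting the iteration into two alternatives at each scale. When $|Dv_k|$ is bounded away from zero, the normalized $p$-Laplacian dominates and the limiting profile enjoys the $C^{1,\widehat{\beta_0}}$ regularity of Lemma \ref{lem2.3}. When $|Dv_k|$ degenerates, the terms $|Du|^{\alpha_i}$ control the scaling, forcing the rate $1/(1+\alpha_i(0))$ for each label $i$ that is relevant at that scale. Taking the minimum across both regimes and across all labels $i$ gives $\tau$; it is necessary to minimize over all $i$ rather than only the unique $i$ with $0\in\Omega_i(u)$ because successive rescalings of the ball may intersect several regions $\Omega_i(u)$, and the free transmission structure forces the slowest rate to dominate.

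The main obstacle is the joint control of the perturbation generated by the variable exponents $\alpha_i(x),\beta_i(x)$ and by the possibly rough geometry of the level sets $\Omega_i(u)$. Unlike in the fully nonlinear case of \cite{DJ22}, here the sub-/super-solution inequalities \eqref{1.10}-\eqref{1.11} must be preserved under each rescaling, and the interplay between the rescaling factor $\rho^{k(1+\tau)}$ and the gradient power $|Dv_k|^{\alpha_i^{0}}$ is not self-similar except in the frozen-exponent viewpoint at the particular rate $\tau$. The balancing condition \eqref{intro:eq118} is sharp for the accumulated series $\sum_k \omega(\rho^k)\log(\rho^{-k})$ to be controllable; combined with the uniform ellipticity of $\Delta_p^{N}$ away from $\{Du=0\}$, it allows the iteration to close at any rate strictly below the target $\tau$, yielding the pointwise estimate \eqref{eq16}.
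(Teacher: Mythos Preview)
Your proposal follows the affine-iteration strategy of \cite{DJ22}: subtract linear functions $\ell_k$ at each dyadic scale, freeze the exponents at $x_0$, and feed the resulting $\xi$-perturbed problem into an improvement-of-flatness lemma of the type \eqref{loc:eq3}--\eqref{loc:eq4}. This is a legitimate alternative, and the paper in fact develops all the $\xi$-perturbed machinery one would need for it in Section~\ref{Section 4}. The paper, however, deliberately takes a different route. Rather than building affine approximants, it proves an oscillation estimate of the form
\[
\sup_{B_{\rho^{k}}(x_{0})}|u-u(x_{0})| \;\le\; \rho^{k(1+\tau_{k})} \;+\; |Du(x_{0})|\sum_{i=0}^{k-1}\rho^{k+i\tau_{k}}
\]
with a \emph{variable} exponent sequence $\tau_{k}\nearrow\tau$ (Proposition~\ref{prop6.3}), obtained by approximating $u$ directly by a normalized $p$-harmonic profile in $C^{1}$ (Proposition~\ref{prop6.2}) without any affine subtraction or gradient shift. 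The extra term $|Du(x_{0})|\cdot(\dots)$ is not iterated away; instead the final argument splits \emph{once}, a posteriori, into $|Du(x_{0})|\le r^{\tau}$ (where the oscillation bound alone closes) and $|Du(x_{0})|>r^{\tau}$ (where the equation becomes uniformly non-degenerate in a small ball and Pucci-extremal estimates apply).

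One point in your sketch should be corrected: the two constraints forming $\tau$ do not arise from a per-scale dichotomy on $|Dv_{k}|$. In both your route and the paper's, $\tau<\widehat{\beta_{0}}$ comes from the regularity of the limiting $p$-harmonic profile in the flatness step, while $\tau\le 1/(1+\alpha_{i}(x_{0}))$ comes from keeping the rescaled source $\rho^{k[1-\tau(1+\alpha_{i}(\rho^{k}x))]}\|f\|_{L^{\infty}}$ bounded through the iteration; both constraints act simultaneously at every scale, and the balancing condition \eqref{intro:eq118} controls the drift $\rho^{-k\tau\omega(\rho^{k})}$ in the latter. The paper's route buys an iteration that never touches the $\xi$-perturbed compactness (Proposition~\ref{prop6.2} is for the unshifted equation), at the price of carrying the gradient term and the variable $\tau_{k}$; your route would reuse Lemma~\ref{lem4.22} more directly and is closer to the existing literature.
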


Note that the constant $ \widehat{\beta_{0}} $ in Theorem \ref{thm2} is the nearly optimal H\"{o}lder exponent to the gradient for solutions of the normalized $p$-harmonic function, see Section \ref{Section 2}.

\vspace{2mm}

Before proceeding further, we make the following important remarks.

\begin{remark}\label{rk3}
The condition \eqref{intro:eq118} admits an equivalent assertion in \cite[Section 2]{DJ22}: for any fixed $ \delta_{1} $ such that if $ \rho \leq \delta_{1} $, then for every $ k \in \mathbb{N} $,
$$ k\omega(\rho^{k}) \leq \frac{\widehat{\beta_{0}}-\tau}{2},$$
where $ \tau $ and $\widehat{\beta_{0}}$ are the constants in Theorem \ref{thm2}.
\end{remark}

\begin{remark}\label{rk4}
Let us consider variable exponents satisfying the Log-condition(see \cite[Section 5]{BDRR23}):
\begin{equation*}
|\alpha_{i}(x)-\alpha_{i}(y)| + |\beta_{i}(x)-\beta_{i}(y)| \leq \frac{\omega^{*}(|x-y|)}{|\log(|x-y|^{-1})|}, \ i=0,1,\cdots,M, \ \forall x, y \in B_{1}, \ x \neq y
\end{equation*}
for a universal modulus of continuity $ \omega^{*}:  [0,+\infty ) \rightarrow  [0,+\infty )$. Note that the function $ r \mapsto \frac{\omega^{*}(r)}{|\log(r^{-1})|}$ is nondecreasing on $ (0,r^{*})$ for some $r^*>0$ with $ \lim_{r\rightarrow 0} \frac{\omega^{*}(r)}{|\log(r^{-1})|} =0$.
Hence the assumption \eqref{intro:eq118} does hold. Particularly, such a condition plays a decisive role in proving higher regularity of solutions to equations with variable exponents (see \cite[Section 4]{MP21}).
\end{remark}

\vspace{2mm}

{\bf {Novelty of this paper.}}
Here, we will briefly explain the new features in this paper.

\vspace{2mm}

(1) The analysis in \cite[Theorem 1.1]{AE18} is limited to the case when $ a(x) =0 $ and $ \alpha(x, u)$ is a constant. Theorem \ref{thm1} generalizes the partial regularity result in \cite{AE18} by allowing $ 0 \leq a(x) \in C(B_{1})$. We only need to assume the positive lower and upper bounds of $ \alpha(x,u) $ and $\beta(x,u)$. No continuity requirements of $ \alpha(x,u) $ and $\beta(x,u)$ are needed. Moreover, this finding allows us to discover an interesting new proof based on two core viscosity inequalities \eqref{1.10} and \eqref{1.11}.

\vspace{2mm}

(2) Theorem \ref{thm2} establishes asymptotically optimal pointwise $ C^{1,\tau}$ regularity for problem \eqref{intro:eq1}, which gives an explicit characterization of $ \tau $ in terms of the degeneracy rates. Even for the simplest model when $ M=2 $, $ \Omega_{1}(u):= \{u>0\}  $, $ \Omega_{2}(u):= \{u<0\} $, $ \Omega_{0}(u):= \{u=0\} $ and $ \alpha_{i}, \beta_{i} $ are constants, $ i=0,1,2 $, there seems no result on the study of pointwise regularity of solutions for \eqref{intro:eq1}.

\vspace{2mm}

(3) Proving Theorem \ref{thm2} is the most delicate part of this paper. Unlike \cite[Theorem 3.1]{ART15}, \cite[Theorem 1.2]{AEGE21}, \cite[Theorem 1.1]{YRZ21}, \cite[Theorem 2]{DJ22} and \cite[Theorem 1.1]{SSKS221}, our approach does not make use of the compactness of solutions for perturbed equation, the approximation method and iteration argument. Our strategy is based on improved oscillation-type estimate (see \cite{AME17, JG20}) combined with a localized analysis (see \cite{ATU17, AEH15, DV21}). We believe that this alternate viewpoint would definitely be of independent interest.
\begin{enumerate}[-]
  \item (\underline{improved oscillation-type estimate}). We aim to perform a geometric decay argument along those points where the gradient becomes very small (in a suitable manner). For this purpose, we first show that the solutions $ u $ of \eqref{intro:eq1} can be approximated by normalized $ p $-harmonic function in a $ C^{1}_{loc} $ fashion. This approximation result ensures an interesting oscillation estimate for solutions to \eqref{intro:eq1} near the critical set $ \{x:Du(x)=0\} $, i.e,
      \begin{equation*}
        \sup_{B_{\rho}(x_{0})} |u(x)-u(x_{0})| \leq \rho^{1+\tau'} +|Du(x_{0})|\rho
      \end{equation*}
     for every $ 0< \tau' < \widehat{\beta_{0}} $ and $ \rho \in (0,\frac{1}{2}) $. Then by iterating the oscillation estimate above, we obtain
     \begin{equation}\label{Intro:eq120}
       \sup_{B_{\rho^{k}}(x_{0})} |u(x)-u(x_{0})| \leq \rho^{k(1+\tau_{k})} +|Du(x_{0})| \sum_{i=0}^{k-1} \rho^{k+i\tau_{k}}
     \end{equation}
   for a non-decreasing sequence $ \{ \tau_{k}\} $.

  \item (\underline{localized analysis}). Whenever $ |Du(x_{0})| $ is small, by using improved oscillation-type estimate in \eqref{Intro:eq120}, we obtain that $ u $ is $ C^{1,\tau}(x_{0}) $. On the other hand, whenever $ |Du(x_{0})| $ is large, then the equation becomes non-degenerate and classical estimate can apply.
\end{enumerate}

{\bf{State-of-the-art.}} In recent years, there has been increasing attention on equations driven by degenerate operators.

Regarding obstacle-type problems, Da Silva-Vivas in \cite{DV21} and \cite{2DV21} established existence and optimal regularity estimates for degenerate elliptic models in non-divergence form:
\begin{equation*}
   \min \{f-|\nabla u|^{\gamma} \Delta u, u -\phi \} = 0 \ \ \text{in} \ \ B_{1} \ \ \left(\text{resp.} \ \ |\nabla u|^{\gamma} = 1\chi_{\{u>\phi\}}\right),
\end{equation*}
where $ \gamma > 0 $, $ f \in L^{\infty}(B_{1}) $, and $ \phi \in C^{1,\beta}(B_{1}) $ with $ \beta \in (0,1] $. Their results show solutions belong to $ C^{1,\min\{\beta,\frac{1}{1+\gamma}\}} $, with \cite{2DV21} employing a geometric approach to derive sharp regularity estimates for $ |\nabla u|^{\gamma} = 1\chi_{\{u>\phi\}} $. Complementary to this, Da Silva et al. \cite{DLRR21} obtained geometric regularity estimates for dead-core problems governed by fully nonlinear elliptic operators of degenerate/singular type. These advances are contextualized by broader studies on $ p$-Laplacian tug-of-war games \cite{P24} and divergence-form quasilinear elliptic models \cite{MZ97}, collectively enriching the framework for analyzing degenerate normalized $ p$-Laplacian operators.

The field has seen extensive developments in elliptic and parabolic normalized $ p $-Laplacian equations, including degenerate/singular cases \cite{MPZ24, PM22, A20, CTL19, TL17, FZ23, LY23, DDS25}. Such as, in the elliptic case, Miao-Peng-Zhou \cite{MPZ24} established a nonlinear Calder\'{o}n-Zygmund $ L^{2}$-theory to the Dirichlet problem
\begin{equation*}
  - |\nabla u|^{\gamma} \Delta_{p}^{\rm{N}} u = f \in L^{2}(B_{1}) \ \ \text{in} \ \ B_{1};  \ \  u = 0 \ \ \text{on} \ \ \partial B_{1}
\end{equation*}
for $ n \geq 2, p>1$ and a large range of $ \gamma > -1 $. Very recently, in the parabolic setting, Bessa-Da Silva-S\'{a} in \cite{DDS25} showed the existence and sharp geometric regularity estimates for bounded solutions of a class of quasilinear parabolic equations in non-divergence form with non-homogeneous degeneracy:
\begin{equation*}
  \partial_{t} u = (|\nabla u|^{\mathcal{P}}+ \mathcal{A}(x,t)|\nabla u|^{\mathcal{Q}}) \Delta_{p}^{\rm{N}} u + f(x,t)  \ \ \text{in} \ \ Q_{1} = B_{1} \times (-1,0],
\end{equation*}
where $ p \in (1,\infty) $, $ \mathcal{P},\mathcal{Q} \in [0, \infty)$, and the functions $ \mathcal{A}, f: Q_{1} \rightarrow \mathbb{R} $ are suitably defined. In such a content, their approach is based on geometric tangential methods, combining a refined oscillation mechanism, compactness arguments, and scaling techniques.

Two key directions emerge for further investigation: first, extending Theorem \ref{thm1} to scenarios with $ p >2$, $ a(x)=0$, $ \alpha(x,u):=a_{1}>0$, and $ f \in C(B_{1}) \cap L^{q'}(B_{1})$ ($ q' >0$); second, exploring $ C^{1,\alpha} $ regularity for Neumann/oblique boundary problems of degenerate/singular normalized $ p $-Laplacian equations \textemdash~both currently unexplored and slated for future work.

{\bf{Organization of the paper.}} In Section \ref{Section 2}, we introduce some definitions of viscosity solutions and collect two useful lemmas that will be needed in the proofs of the main theorems. The existence of solution of \eqref{eq6} is given in Section \ref{Section 3}. Section \ref{Section 4} is dedicated to the proof of local $ C^{1,\alpha'}$ regularity to \eqref{eq6}. We complete the proof of Theorem \ref{thm1} in Section \ref{Section 5}. In Section \ref{Section 6}, we are devoted to presenting the proof of Theorem \ref{thm2}.

{\bf{Notation.}} The following notations are also used in this article.

\begin{itemize}
\item For $ r >0 $, $ B_{r}(x) $ denotes the open ball of radius $ r $ centered at $ x $. We simply use $ B_{r} $ to denote the open ball $ B_{r}(0)$.

\item $ \textbf{I}\rm{d}_{n} $ denotes the $n\times n$ identity matrix.

\item $ \chi_{\rm{E}} $ denotes the characteristic function of measurable set $ E $.

\item For $ \alpha \in (0,1) $, we shall write
$$ ||u||_{C^{0,\alpha}(B)}:= ||u||_{L^{\infty}(B)} + \sup_{x,y\in B}\frac{|u(x)-u(y)|}{|x-y|^{\alpha}} $$
 and
 $$ ||u||_{C^{1,\alpha}(B)}:=\sup _{r>0,x\in B} \inf_{p\in \mathbb{R}^{n},c\in \mathbb{R} } \sup_{z\in B_{r}(x)\cap B} \frac{|u(z)-p\cdot z-c|}{r^{1+\alpha}} , $$
where $ B \subset B_{1} $ is an open ball.

\item We say $ u $ is $ C^{1,\alpha} $ at $ x_{0} \in B_{1} $ if $ u \in C^{1} $ in a neighborhood of $ x_{0} $ and
$$ ||u||_{C^{1,\alpha}(x_{0})} := \sup_{y \in B_{r}(x_{0})}|u(y)|+\sup_{y \in B_{r}(x_{0})}|Du(y)|+\sup_{y\in B_{r}(x_{0}),r>0}\frac{|Du(y)-Du(x_{0})|}{|y-x_{0}|^{\alpha}}<\infty . $$

\item $ C $ shall denote a generic positive constant which may vary in different inequalities.
\end{itemize}

\vspace{3mm}

\section{Preliminaries}\label{Section 2}

We shall split this section into two parts: First, we give the definitions of the viscosity solution for the normalized $ p $-Laplacian equation and equation $ (\ref{intro:eq1})$. Then, we collect several useful lemmas, which will be used in the proofs of the main theorems.

\subsection{Definitions of viscosity solutions}
We define the viscosity solutions of the normalized $ p$-Laplacian equation and the equation \eqref{intro:eq1}.

\begin{Definition}(\cite[Section 2]{CIL92})
\label{def1}
Let $ 1<p< \infty $. An upper semicontinuous function $ u $ is a viscosity sub-solution of the equation $ -\Delta_{p}^{\rm{N}}u =f  $ if for all $ x_{0} \in B_{1}  $ and $ \varphi \in C^{2}(B_{1}) $ such that $ u- \varphi $ attains a local maximum at $ x_{0}$, one has
\begin{equation*}
\left\{
     \begin{aligned}
     & -\Delta_{p}^{\rm{N}}\varphi(x_{0}) \leq f(x_{0}) ,  \qquad   \qquad  \qquad  \qquad \qquad  \qquad \text{if} \ D \varphi(x_{0}) \neq 0,     \\
     &  -\Delta \varphi(x_{0}) - (p-2) \lambda_{max}(D^{2}\varphi(x_{0}) ) \leq f(x_{0}), \ \ \ \ \ \ \text{if} \ D \varphi(x_{0}) = 0 \ \text{and} \  p\geq 2 ,    \\
     & -\Delta \varphi(x_{0}) - (p-2) \lambda_{min}(D^{2}\varphi(x_{0}) ) \leq f(x_{0}), \ \ \ \ \ \ \text{if} \ D \varphi(x_{0}) = 0 \ \text{and} \  1< p < 2 .     \\
     \end{aligned}
     \right.
\end{equation*}
A lower semicontinuous function $ u $ is a viscosity super-solution of the equation $ -\Delta_{p}^{\rm{N}}u =f  $ if for all $ x_{0} \in B_{1}  $ and $ \varphi \in C^{2}(B_{1}) $ such that $ u- \varphi $ attains a local minimum at $ x_{0}$, one has
\begin{equation*}
\left\{
     \begin{aligned}
     & -\Delta_{p}^{\rm{N}}\varphi(x_{0})  \geq f(x_{0}) ,  \qquad   \qquad  \qquad  \qquad \qquad  \qquad \text{if} \ D \varphi(x_{0}) \neq 0,     \\
     &  -\Delta \varphi(x_{0}) - (p-2) \lambda_{min}(D^{2}\varphi(x_{0}) ) \leq f(x_{0}), \ \ \ \ \ \ \text{if} \ D \varphi(x_{0}) = 0 \ \text{and} \  p\geq 2 ,    \\
     & -\Delta \varphi(x_{0}) - (p-2) \lambda_{max}(D^{2}\varphi(x_{0}) ) \leq f(x_{0}), \ \ \ \ \ \ \text{if} \ D \varphi(x_{0}) = 0 \ \text{and} \  1< p < 2 .     \\
     \end{aligned}
     \right.
\end{equation*}

We say that $ u $ is a viscosity solution of $ -\Delta_{p}^{\rm{N}}u =f  $ in $ B_{1} $ if it is both a viscosity sub-solution and a viscosity super-solution.
\end{Definition}

For $ 0 <a_{1} \leq \alpha(x,u) \leq \beta(x,u)\leq a_{2} <\infty $, the operator $$ -\big\{|Du|^{\alpha(x,u)}+a(x)|Du|^{\beta(x,u)}\big\}\Delta_{p}^{\rm{N}} u $$ is well-defined (in the sense that it is not singular) even if $ Du=0 $. We then use the standard definition of viscosity solution in \cite{HL90} to define the viscosity solution of equation \eqref{intro:eq1}.

\begin{Definition}(\cite[Section 2]{HL90})
\label{def1}
An upper semicontinuous function $ u $ is a viscosity sub-solution of \eqref{intro:eq1} in $ B_{1} $ if for all $ \varphi \in C^{2}(B_{1}) $ such that $ u-\varphi $ has a local maximum at $ x_{0}\in B_{1} $ and
\begin{equation*}
-\big\{|D\varphi(x_{0})|^{\alpha(x_{0}, u(x_{0}))}+a(x_{0})|D\varphi(x_{0})|^{\beta(x_{0},u(x_{0}))}\big\}\Delta_{p}^{\rm{N}} \varphi(x_{0})  \leq f(x_{0}).
\end{equation*}
A lower semicontinuous function $ u $ is a viscosity super-solution of (\ref{intro:eq1}) in $ B_{1} $ if for all $ \varphi \in C^{2}(B_{1}) $ such that $ u-\varphi $ has a local minimum at $ x_{0}\in B_{1} $ and
\begin{equation*}
-\big\{|D\varphi(x_{0})|^{\alpha(x_{0},u(x_{0}))}+a(x_{0})|D\varphi(x_{0})|^{\beta(x_{0},u(x_{0}))}\big\}\Delta_{p}^{\rm{N}} \varphi(x_{0})  \geq f(x_{0}).
\end{equation*}

We say that $ u $ is a viscosity solution of \eqref{intro:eq1} in $ B_{1} $ if it is both a viscosity sub-solution and a viscosity super-solution.
\end{Definition}

\begin{remark}
When $ \alpha(x,u), \beta(x,u) \in C^{0}(B_{1})   $ and $ -1 < \alpha_{\min}\leq\alpha(x,u) \leq \beta(x,u) \leq \beta_{\max} < 0    $, the definition of viscosity solution to $ ( \ref{intro:eq1} )$ can be adapted from the definition used by Birindelli-Demengel in \cite{IF10, IF06, IF04}.
A lower semicontinuous function $ u: \overline{B}_{1} \rightarrow \mathbb{R} $ is called a viscosity super-solution of $ ( \ref{intro:eq1} )$ if for each $ x_{0} \in B_{1}  $ either there exists $ \delta >0 $ such that $ u $ is constant in $ B(x_{0},\delta) $ and $ f(x) \leq 0 $ for all $ x \in B(x_{0},\delta) $ or for all $ \varphi \in C^{2}(B_{1}) $ such that $ u- \varphi$ has a local minimum at $ x_{0} $ and $ D \varphi(x_{0}) \neq 0   $, it holds true that
\begin{equation*}
-\big\{|D\varphi(x_{0})|^{\alpha(x_{0},u(x_{0}))}+a(x_{0})|D\varphi(x_{0})|^{\beta(x_{0},u(x_{0}))}\big\}\Delta_{p}^{\rm{N}} \varphi(x_{0})  \geq f(x_{0}).
\end{equation*}
A viscosity sub-solution of $ ( \ref{intro:eq1} )$ can be defined analogously. A function $ u$ is called a viscosity solution to $ ( \ref{intro:eq1} )$ if and only if it is both a viscosity super-solution and a viscosity sub-solution.
\end{remark}

\begin{remark}\label{rk6}
We say the normalized $ p $-Laplacian operator is uniformly elliptic in the sense that
\begin{equation*}
\label{eq0}
\mathcal{M}^{-}_{\lambda,\Lambda}(D^{2}u) \leq \Delta_{p}^{\rm{N}}u \leq \mathcal{M}^{+}_{\lambda,\Lambda}(D^{2}u),
\end{equation*}
where
$$ \mathcal{M}^{-}_{\lambda,\Lambda}(D^{2}u) £º= \inf_{A \in A_{\lambda,\Lambda}} \text{Tr}(AD^{2}u), \  \mathcal{M}^{+}_{\lambda,\Lambda}(D^{2}u) £º= \sup_{A \in A_{\lambda,\Lambda}} \text{Tr}(AD^{2}u)    $$
and $ A_{\lambda,\Lambda} $ is a set of symmetric $ n \times n $ matrices, whose eigenvalues belong to the interval $ [\lambda,\Lambda] $. Indeed, the normalized $ p $-Laplacian operator can be written in the form
$$ \Delta_{p}^{\rm{N}}u = \text{Tr}\left((\textbf{I}d_{n}+(p-2)\frac{Du}{|Du|}\otimes \frac{Du}{|Du|} )D^{2}u\right) ,  $$
then it is easy to check that $ \lambda =\min \big \{ 1,p-1  \big \}  $ and $ \Lambda =\max \big \{ 1,p-1  \big \} $.
\end{remark}

\subsection{Auxiliary lemmas}
We recall two lemmas related to the normalized $ p$-Laplacian, which will be used in later sections.

The first lemma is a useful variant of the cutting lemma, which is the key ingredient in the proof of Proposition \ref{prop6.2}.
\begin{Lemma}\label{lem2.2}
Assume that $ u $ is a viscosity sub-solution to
$$ \min_{i=0,1,\cdots,M} \bigg \{-\big(|Du|^{\alpha_{i}(x)} + a(x)|Du|^{\beta_{i}(x)} \big)   \Delta_{p}^{\rm{N}} u \bigg\}  = 0 \ \ \text{in} \ \ B_{1}      $$
and $ u $ is a viscosity super-solution to
$$ \max_{i=0,1,\cdots,M} \bigg \{-\big(|Du|^{\alpha_{i}(x)} + a(x)|Du|^{\beta_{i}(x)} \big)\Delta_{p}^{\rm{N}} u \bigg\} = 0 \ \ \text{in} \ \ B_{1},  $$
where $ \alpha_{i}(x), \beta_{i}(x) $ are given in \eqref{intro:eq115}--\eqref{intro:eq118}, $ i=0,1,\cdots,M $. Then $ u $ fulfills
$$ - \Delta_{p}^{\rm{N}}u =0 \ \ \text{in} \ \ B_{1}  $$ in the viscosity sense.
\end{Lemma}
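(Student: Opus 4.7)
The plan is to verify that $u$ is both a viscosity subsolution and a viscosity supersolution of $-\Delta_p^N u = 0$. By the symmetric structure of the two hypotheses (min-equation versus max-equation), it suffices to carry out the subsolution half in detail; the supersolution half is identical with the obvious sign changes.

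Let $\varphi \in C^2(B_1)$ touch $u$ from above at some $x_0 \in B_1$. I would split the argument along whether the test gradient vanishes at $x_0$. The non-degenerate case $D\varphi(x_0) \neq 0$ is immediate: each weight
\[
c_i(x_0) := |D\varphi(x_0)|^{\alpha_i(x_0)} + a(x_0)|D\varphi(x_0)|^{\beta_i(x_0)}
\]
is strictly positive because $\alpha_i, \beta_i > 0$ by \eqref{intro:eq116} and $|D\varphi(x_0)| > 0$. The sub-solution property of the min equation at $x_0$ gives $\min_i \{-c_i(x_0)\Delta_p^N \varphi(x_0)\} \leq 0$, and if $\Delta_p^N\varphi(x_0)$ were negative then every bracket would be strictly positive, a contradiction. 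Hence $-\Delta_p^N\varphi(x_0)\leq 0$, which is the subsolution test at a nonzero-gradient point.

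The degenerate case $D\varphi(x_0) = 0$ is the main obstacle, since in this case the min-equation inequality degenerates to $0 \leq 0$ and carries no information. I would handle it by a classical linear perturbation. First replace $\varphi$ by $\varphi + |x - x_0|^4$ (this leaves $D\varphi(x_0)$ and $D^2\varphi(x_0)$ unchanged and makes $x_0$ a strict local maximum of $u - \varphi$ with a quartic lower bound). Next, for a unit vector $\xi \in \mathbb{R}^n$ and small $\epsilon > 0$ set
\[
\varphi_{\epsilon,\xi}(x) := \varphi(x) + \epsilon\, \xi \cdot (x - x_0),
\]
and choose a maximum point $x_\epsilon$ of $u - \varphi_{\epsilon,\xi}$ on a fixed small closed ball around $x_0$. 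Standard viscosity stability gives $x_\epsilon \to x_0$ as $\epsilon \to 0$, and because $D\varphi(x_0) = 0$ one has $D\varphi_{\epsilon,\xi}(x_\epsilon) = D\varphi(x_\epsilon) + \epsilon\xi$ with $D\varphi(x_\epsilon) \to 0$. With a quantitative control on the strict maximum, this ensures that $D\varphi_{\epsilon,\xi}(x_\epsilon)\neq 0$ for all small $\epsilon$ and that its unit direction converges to $\xi$. Applying the non-degenerate case to $\varphi_{\epsilon,\xi}$ at $x_\epsilon$ and passing to the limit $\epsilon \to 0$ yields
\[
\Delta\varphi(x_0) + (p-2)\bigl\langle D^2\varphi(x_0)\xi,\xi\bigr\rangle \geq 0 .
\]
Choosing $\xi$ as a principal eigenvector of $D^2\varphi(x_0)$ and exploiting the monotonicity of $(p-2)\langle D^2\varphi(x_0)\xi,\xi\rangle$ in the eigenvalues then promotes this to the required degenerate viscosity subsolution inequality (with $\lambda_{\max}$ when $p \geq 2$, with $\lambda_{\min}$ when $1 < p < 2$).

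The hardest step is the quantitative reasoning hidden in the degenerate case: controlling $|x_\epsilon - x_0|$ finely enough that the linear perturbation $\epsilon\xi$ dominates $D\varphi(x_\epsilon)$ so that the normalized gradient $D\varphi_{\epsilon,\xi}(x_\epsilon)/|D\varphi_{\epsilon,\xi}(x_\epsilon)|$ genuinely converges to $\xi$. If the elementary linear perturbation does not suffice, a doubling-of-variables argument together with Ishii's lemma supplies the needed matrix estimates and direction control. The supersolution direction is obtained identically, invoking the super-solution hypothesis on the max equation in place of the min equation.
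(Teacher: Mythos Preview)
Your proposal follows the same overall strategy as the references the paper cites (\cite[Lemma~2.6]{AE18} and \cite[Lemma~5]{DJ22}): the non-degenerate case $D\varphi(x_0)\neq 0$ is immediate from positivity of the weights, and the degenerate case $D\varphi(x_0)=0$ is handled by perturbing the test function so that the gradient becomes nonzero. So the approach is correct and matches the intended one.

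There is, however, an unnecessary complication in your degenerate step. You aim to force the normalized direction $D\varphi_{\epsilon,\xi}(x_\epsilon)/|D\varphi_{\epsilon,\xi}(x_\epsilon)|$ to converge to a prescribed eigenvector $\xi$, and you correctly flag this as delicate (with the quartic bump one only gets $|x_\epsilon-x_0|=O(\epsilon^{1/3})$, so $D\varphi(x_\epsilon)=O(\epsilon^{1/3})$ may dominate $\epsilon\xi$). The point is that this direction control is not needed. Once you know $D\varphi_{\epsilon,\xi}(x_\epsilon)\neq 0$, the non-degenerate case gives
\[
\Delta\varphi_{\epsilon,\xi}(x_\epsilon)+(p-2)\big\langle D^2\varphi_{\epsilon,\xi}(x_\epsilon)\,\eta_\epsilon,\eta_\epsilon\big\rangle\ \ge\ 0
\]
for \emph{some} unit vector $\eta_\epsilon$. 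For $p\ge 2$ one bounds $(p-2)\langle D^2\varphi_{\epsilon,\xi}(x_\epsilon)\eta_\epsilon,\eta_\epsilon\rangle\le (p-2)\lambda_{\max}(D^2\varphi_{\epsilon,\xi}(x_\epsilon))$, and for $1<p<2$ by $(p-2)\lambda_{\min}$; since $D^2\varphi_{\epsilon,\xi}(x_\epsilon)\to D^2\varphi(x_0)$ (the quartic contribution is $O(|x_\epsilon-x_0|^2)\to 0$), passing to the limit yields exactly the degenerate subsolution inequality in Definition~\ref{def1}. No information about $\eta_\epsilon$ beyond $|\eta_\epsilon|=1$ is used, so the ``hardest step'' you identify disappears. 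The only residual technicality is ensuring $D\varphi_{\epsilon,\xi}(x_\epsilon)\neq 0$ along a sequence $\epsilon\to 0$, which is routine (e.g.\ if $x_\epsilon=x_0$ then the gradient equals $\epsilon\xi\neq 0$; otherwise one may vary $\xi$ or argue by the usual jet perturbation).
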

The proof of this lemma closely parallels that of \cite[Lemma 5]{DJ22} or \cite[Lemma 2.6]{AE18}. We shall omit the proof.

In the end, we present an important lemma involving the nearly optimal regularity for solutions to the normalized $ p$-Poisson equation, which shall be used in the proof of Theorem \ref{thm2}. We refer the readers to \cite[Theorem 1.3]{AME17} for the details.
\begin{Lemma}\label{lem2.3}
Fixing an arbitrary constant $ \xi \in (0,\widehat{\alpha_{1}})$, where $ \widehat{\alpha_{1}} $ is the optimal H\"{o}lder exponent for gradients of $ p$-harmonic functions in terms of a priori estimate. Then the following conclusions hold:
\begin{itemize}
    \item[{\it (a).}] If $ p>1$, and $ f \in C(B_{1}) \cap L^{\infty}(B_{1}) $, then viscosity solutions to $ -\Delta_{p}^{\rm{N}}u=f $ are in $ C^{1,\widehat{\beta_{0}}}_{loc}(B_{1}) $, where $ \widehat{\beta_{0}}= \widehat{\alpha_{1}}-\xi   $;

    \item[{\it (b).}] If $ p>2 $, $ q > \max (2,n,p/2) $ and $ f \in C(B_{1}) \cap L^{q}(B_{1}) $, then viscosity solutions to $ -\Delta_{p}^{\rm{N}}u=f $ are in $ C^{1,\widehat{\beta_{0}}}_{loc}(B_{1}) $, where $ \widehat{\beta_{0}} := \min \{\widehat{\alpha_{1}}-\xi, 1-n/q\} $.
\end{itemize}
In particular, if the equation is homogeneous, namely $ f=0 $, then viscosity solutions to $ -\Delta_{p}^{\rm{N}}u=0 $ are in $ C^{1,\widehat{\beta_{0}}}_{loc}(B_{1}) $, where $ \widehat{\beta_{0}}= \widehat{\alpha_{1}} -\xi   $.
\end{Lemma}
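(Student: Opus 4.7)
The plan is to follow the improvement-of-flatness scheme of Caffarelli adapted to the normalized $p$-Laplacian as in [AME17], producing an affine approximation at every interior point with quantitative error $r^{1+\widehat{\beta_0}}$ and then summing a geometric series. First I would reduce by the dilation $u \mapsto u/K$ with $K = \|u\|_{L^\infty(B_1)} + \|f\|_{L^\infty(B_1)}/\varepsilon_0$ to the normalized setting $\|u\|_{L^\infty(B_1)}\le 1$ and $\|f\|_{L^\infty(B_1)}\le \varepsilon_0$, where $\varepsilon_0$ will be chosen small depending on the prescribed closeness to a $p$-harmonic function.

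The core step is an approximation lemma: for any $\delta>0$ there exists $\varepsilon_0$ such that any viscosity solution of $-\Delta_p^N u = f$ with the normalized data above is within $L^\infty(B_{1/2})$-distance $\delta$ of some normalized $p$-harmonic function $h$. I would prove this by a compactness-contradiction argument. The uniform ellipticity recorded in Remark \ref{rk6} together with a Krylov--Safonov / Ishii--Lions H\"older estimate forces a hypothetical contradicting sequence $\{u_k\}$ to be precompact in $C^0_{\mathrm{loc}}$; the standard stability theory for viscosity solutions of singular operators (in the Ohnuma--Sato formulation needed because $\Delta_p^N$ is discontinuous at $\{Du=0\}$) then shows that any limit is $p$-harmonic, contradicting the contradiction hypothesis. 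Combining this with the well-known $C^{1,\widehat{\alpha_1}}$ estimate for $p$-harmonic functions (Ural'tseva--Evans) yields, for small enough $r$ and $\delta$, an affine function $\ell$ with
\[
\sup_{B_r} |u-\ell|\le r^{1+\widehat{\beta_0}}, \qquad |\ell(0)|+|D\ell|\le C.
\]

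To iterate, I would rescale by $u_1(x):=(u(rx)-\ell(rx))/r^{1+\widehat{\beta_0}}$. The main obstacle here, and the reason the argument is not a carbon copy of the Caffarelli scheme, is that $\Delta_p^N$ is \emph{not} invariant under subtraction of affine functions because it uses the direction $Du/|Du|$. I would resolve this exactly as suggested by the perturbed operators $\Delta_{p,\xi}^N$ introduced in \eqref{Intro:eqN}--\eqref{Intro:eqM}: set $\xi_1=D\ell$ and verify both the approximation lemma and the $p$-harmonic $C^{1,\widehat{\alpha_1}}$ estimate for the family $\{-\Delta_{p,\xi}^N u = f\}_{\xi\in\mathbb{R}^n}$ with constants \emph{uniform} in $\xi$. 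Given this uniformity, the iteration runs: at step $k$ one finds $\ell_k$ with $\sup_{B_{r^k}}|u-\ell_k|\le r^{k(1+\widehat{\beta_0})}$ and $|D\ell_{k+1}-D\ell_k|\lesssim r^{k\widehat{\beta_0}}$, whose sum converges and characterizes $Du(0)$; the same argument translated to any $x_0\in B_{1/2}$ gives the local $C^{1,\widehat{\beta_0}}$ estimate. The homogeneous case $f=0$ is the same argument with $\varepsilon_0$ playing no role.

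Finally, for part (b) I would invoke the divergence-form representation available in the super-quadratic range $p>2$: viscosity solutions of $-\Delta_p^N u = f$ are (up to the factor $(p-2)|Du|^{p-2}$) weak solutions of a genuine $p$-Laplace equation with right-hand side controlled in $L^q$, and nonlinear Calder\'on--Zygmund / nonlinear potential estimates then produce an extra $C^{0,1-n/q}$ modulus for $Du$; taking the minimum with $\widehat{\alpha_1}-\xi$ yields the stated $\widehat{\beta_0}$. Beyond uniformity of the approximation in $\xi$ mentioned above, the genuine difficulty I anticipate is the stability step when $p<2$, where $\Delta_p^N$ is singular at $\{Du=0\}$ and one must rule out spurious test-function behaviour along the singular set; this is precisely where the Ohnuma--Sato refinement of the standard viscosity stability theorem is indispensable.
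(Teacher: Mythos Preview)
The paper does not prove this lemma at all: it is stated as a preliminary result and the authors simply write ``We refer the readers to \cite[Theorem 1.3]{AME17} for the details.'' Your sketch is essentially an outline of the argument in that reference (improvement-of-flatness via compactness, handling the affine subtraction through the perturbed operators $\Delta_{p,\xi}^N$, and the divergence-form route for part (b)), so there is nothing to compare against in the present paper; your approach matches the cited source.
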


\vspace{3mm}

\section{The existence of problem \eqref{eq6} }\label{Section 3}
	~~~~In this section, we show the existence of viscosity solution to problem \eqref{eq6}, which is formulated in the following theorem:

\begin{Theorem}
\label{pro1}
Assume the conditions $ (\ref{pre:eq1})-(\ref{pre:eq4})$ hold, and normalized $ p$-Laplacian operator $ \Delta_{p}^{\rm{N}}$ is given in \eqref{introd:eq1}. Then there exists a viscosity solution $ u \in C(B_{1}) $ to \eqref{eq6}, and $ u $ is a viscosity sub-solution of
\begin{equation}\label{Section3:eq1}
\min_{i=0,1,2} \bigg \{-\big(|Du|^{a_{i}}+a(x)|Du|^{a_{i}}\big)\Delta_{p}^{\rm{N}}u \bigg \}=||f||_{L^{\infty}(B_{1})},
\end{equation}
and is a viscosity super-solution of
\begin{equation}\label{Section3:eq2}
\max_{i=0,1,2} \bigg \{-\big(|Du|^{a_{i}}+a(x)|Du|^{a_{i}}\big)\Delta_{p}^{\rm{N}}u \bigg \}=-||f||_{L^{\infty}(B_{1})},
\end{equation}
where $ a_{0} = 0 $.
\end{Theorem}

The proof is based on the approach in \cite[Section 3]{De22},  and \cite[Theorem 1]{HPRS21}, see also \cite[Theorem 1]{PS22}. First, we introduce a family of approximating problems and establish a comparison principle. Then, by the standard construction of a viscosity super-solution and a viscosity sub-solution, we can prove the existence and uniqueness of viscosity solution to the family of approximating problems via Perron's method. Using Schauder's fixed point theorem and a limiting procedure, we obtain a viscosity solution to problem \eqref{eq6}.
We shall present these steps in two subsections.

\subsection{Approximating problems and comparison principle}

First, we consider $ v \in C(\overline{B_{1}})$ such that $ v=g $ on $ \partial B_{1}$ and $ h_{\epsilon}^{v}(x) $ is defined by
\begin{equation*}
h_{\epsilon}^{v} = \zeta_{\epsilon}^{v} \ast g_{\epsilon}^{v},
\end{equation*}
where
\begin{equation*}
g_{\epsilon}^{v}=\max \left(\min (\frac{v+\epsilon}{2\epsilon},1), 0 \right) \ \ \ \text{in} \ \ B_{1}, \ g_{\epsilon}^{v}=0 \ \ \text{in} \ \ \mathbb{R}^{n} \setminus B_{1},
\end{equation*}
and $ \zeta_{\epsilon}^{v} $ is the standard mollifier function with $\epsilon>0$. Setting
$$  a_{\epsilon}^{v}(x)= a_{1}h_{\epsilon}^{v}(x) +a_{2}(1-h_{\epsilon}^{v}(x)),  $$
it is obvious that $$ a_{1} \leq a_{\epsilon}^{v}(x) \leq a_{2}    .$$ We consider a family of approximating problems
\begin{equation}\label{exi:eq1}
F_{\epsilon}^{v}(|Du|)\left(\epsilon u -\Delta u -(p-2) \bigg \langle D^{2}u \frac{Du}{\epsilon+|Du|},\frac{Du}{\epsilon+|Du|}\bigg\rangle \right)=f(x) \ \ \text{in} \ \ B_{1},
\end{equation}
where
\begin{equation*}
\begin{aligned}
F_{\epsilon}^{v}(|Du|) : = &(\epsilon+|Du|)^{a_{\epsilon}^{v}(x)}+ \big[\epsilon+a(x)h_{\epsilon}^{v}(x)\big](\epsilon+|Du|)^{a_{1}} \\
&   + \big[\epsilon+a(x)(1-h_{\epsilon}^{v}(x))\big](\epsilon+|Du|)^{a_{2}}.
\end{aligned}
\end{equation*}

The next lemma is an important comparison principle of problem \eqref{exi:eq1}.

\begin{Lemma}\label{lemma3.2}
Suppose $ 0 \leq a(x) \in C(B_{1}) $ and $ f(x) \in C(B_{1}) $. Let $ u $ be an upper semi-continuous viscosity sub-solution to \eqref{exi:eq1} and $ w $ be a lower semi-continuous viscosity super-solution to \eqref{exi:eq1}. If $ u \leq w $ on $ \partial B_{1}$, then $ u \leq w $ in $ B_{1} $.
\end{Lemma}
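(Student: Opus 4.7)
My plan is to run the classical Ishii--Lions doubling-of-variables comparison argument. Two structural features of the regularized equation \eqref{exi:eq1} make this clean: the zeroth-order term $\epsilon u$ provides strict monotonicity in $u$, and after the $\epsilon$-regularization the second-order operator is uniformly elliptic with the modulating factor $F_\epsilon^v(x,|\cdot|)$ bounded uniformly away from zero (e.g.\ $F_\epsilon^v \geq \epsilon^{a_2}$ when $\epsilon<1$). Together, these allow the standard contradiction scheme to close without any special structural obstruction.

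I would argue by contradiction: suppose $M := \sup_{\overline{B_1}} (u-w) > 0$. The boundary condition $u\le w$ on $\partial B_1$ and upper semi-continuity of $u-w$ force $M$ to be attained at some interior point $\hat x \in B_1$. Following the standard penalization, consider
\begin{equation*}
\Psi_j(x,y) := u(x) - w(y) - \tfrac{j}{2}|x-y|^2
\end{equation*}
on $\overline{B_1}\times \overline{B_1}$, and let $(x_j,y_j)$ be a maximum. Standard estimates then give $j|x_j-y_j|^2 \to 0$, both $x_j,y_j \to \hat x \in B_1$, and $u(x_j)-w(y_j) \to M$. Applying the Crandall--Ishii lemma (see \cite{CIL92}) produces symmetric matrices $X_j \le Y_j$ such that $(p_j,X_j) \in \overline{J}^{2,+}u(x_j)$ and $(p_j,Y_j) \in \overline{J}^{2,-}w(y_j)$, where $p_j := j(x_j-y_j)$.

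Plugging these jets into the sub- and super-solution inequalities, dividing by the positive factor $F_\epsilon^v$, and subtracting yields
\begin{equation*}
\epsilon \bigl(u(x_j) - w(y_j)\bigr) \;\le\; \operatorname{tr}(A_j X_j) - \operatorname{tr}(A_j Y_j) + \frac{f(x_j)}{F_\epsilon^v(x_j,|p_j|)} - \frac{f(y_j)}{F_\epsilon^v(y_j,|p_j|)},
\end{equation*}
where $A_j := I + (p-2)\tfrac{p_j\otimes p_j}{(\epsilon+|p_j|)^2}$ has eigenvalues in $[\min(1,p-1),\max(1,p-1)]$ and is in particular positive semidefinite. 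The matrix inequality $X_j\le Y_j$ then forces $\operatorname{tr}(A_j X_j)\le \operatorname{tr}(A_j Y_j)$, so only the $f$-difference survives on the right. Passing $j \to \infty$, the left side tends to $\epsilon M > 0$, while the right side tends to $0$, giving the desired contradiction.

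The main obstacle I anticipate is justifying $\tfrac{f(x_j)}{F_\epsilon^v(x_j,|p_j|)} - \tfrac{f(y_j)}{F_\epsilon^v(y_j,|p_j|)} \to 0$, since $|p_j|$ may blow up with $j$. The factor $F_\epsilon^v$ contains terms of the form $(\epsilon+|p_j|)^{a_\epsilon^v(x)}$, and a change of exponent of size $|x_j-y_j|$ produces an error of order $|x_j-y_j|\log(\epsilon+|p_j|)$. This is controlled because $h_\epsilon^v$ is Lipschitz (being a mollification) and $a\in C(B_1)$ is uniformly continuous on compact sets, so $a_\epsilon^v$ is Lipschitz; combined with $|x_j-y_j| = o(j^{-1/2})$ and $|p_j|=O(\sqrt{j})$ (from $j|x_j-y_j|^2\to 0$), we obtain $|x_j-y_j|\log(\epsilon+|p_j|)\to 0$, which together with the lower bound $F_\epsilon^v \geq \epsilon^{a_2}$ and continuity of $f$ closes the argument.
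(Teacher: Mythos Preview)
Your proposal is correct and follows essentially the same approach as the paper: both run the standard Ishii--Lions doubling-of-variables argument, exploit the proper term $\epsilon u$ for strict monotonicity, divide through by the positive factor $F_\epsilon^v$, use $X_j\le Y_j$ together with positive definiteness of the coefficient matrix to discard the second-order terms, and then control the residual $f/F_\epsilon^v$ difference via continuity of $f$, $a$, and $h_\epsilon^v$ combined with $|x_j-y_j|\to 0$. The paper carries out the estimate of this last error term in slightly more detail (splitting it into three pieces according to the three summands in $F_\epsilon^v$), but your observation that $|x_j-y_j|\log(\epsilon+|p_j|)\to 0$ handles the variable-exponent piece just as well.
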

\begin{proof}
We prove by contradiction. Suppose $ \max_{B_{1}} (u-w) = a_{0} >0   $. For $ \delta >0 $, we define
$$ \varphi_{\delta}(x,y)= u(x)-w(y)-\frac{|x-y|^{2}}{2\delta}.$$
If there exists $ (x_{\delta},y_{\delta}) \in \overline{B_{1}} \times \overline{B_{1}} $ such that
\begin{equation}\label{exi:eq2}
\max_{(x,y)\in \overline{B_{1}} \times \overline{B_{1}}} \varphi_{\delta}(x,y) = \varphi_{\delta}(x_{\delta},y_{\delta}) \geq a_{0},
\end{equation}
from \cite[Lemma 3.1]{CIL92}, we have
\begin{equation}\label{exi:eq3}
\lim_{\delta \rightarrow 0}\frac{|x_{\delta}-y_{\delta}|^{2}}{\delta} =0.
\end{equation}
Notice that $ x_{\delta},y_{\delta}$ must belong to the interior of $ B_{1} $. Otherwise $ \varphi_{\delta}(x_{\delta},y_{\delta}) \leq 0 $, which is a contradiction to \eqref{exi:eq2}. By Ishii-Lions Lemma \cite[Theorem 3.2]{CIL92}, the limiting subjet $ (\frac{x_{\delta}-y_{\delta}}{\delta},X)$ of $ u $ at $ x_{\delta}$ and the limiting superjet $ (\frac{x_{\delta}-y_{\delta}}{\delta},Y)$ of $ u $ at $ y_{\delta} $ exist, which satisfy
\begin{equation}\label{exi:eq4}
-\frac{1}{\delta} \textbf{I}\rm{d}_{n} \leq
\begin{pmatrix}
X  &   0   \\
0  &    -Y
\end{pmatrix}
\leq \frac{1}{\delta}
\begin{pmatrix}
\textbf{I}\rm{d}_{n}   &   -\textbf{I}\rm{d}_{n}   \\
-\textbf{I}\rm{d}_{n} &   \textbf{I}\rm{d}_{n}
\end{pmatrix}.
\end{equation}
Moreover, we have the following two viscosity inequalities
\begin{equation}\label{exi:eq5}
H_{\epsilon;x_{\delta}}^{v}\left(\frac{|x_{\delta}-y_{\delta}|}{\delta}\right) \left(\epsilon u(x_{\delta}) -\Delta u(x_{\delta}) -(p-2) \bigg \langle X \frac{\frac{x_{\delta}-y_{\delta}}{\delta}}{\epsilon+|\frac{x_{\delta}-y_{\delta}}{\delta}|},\frac{\frac{x_{\delta}-y_{\delta}}{\delta}}{\epsilon+|\frac{x_{\delta}-y_{\delta}}{\delta}|}\bigg\rangle \right)\leq f(x_{\delta}),
\end{equation}
and
\begin{equation}\label{exi:eq6}
H_{\epsilon;y_{\delta}}^{v}\left(\frac{|x_{\delta}-y_{\delta}|}{\delta}\right) \left(\epsilon w(y_{\delta}) -\Delta w(y_{\delta}) -(p-2) \bigg \langle Y \frac{\frac{x_{\delta}-y_{\delta}}{\delta}}{\epsilon+|\frac{x_{\delta}-y_{\delta}}{\delta}|},\frac{\frac{x_{\delta}-y_{\delta}}{\delta}}{\epsilon+|\frac{x_{\delta}-y_{\delta}}{\delta}|}\bigg\rangle \right) \geq f(y_{\delta}),
\end{equation}
where
\begin{align}\label{exi:eq7}
\begin{split}
 H_{\epsilon;x_{\delta}}^{v}\left(\frac{|x_{\delta}-y_{\delta}|}{\delta}\right) := &\left(\epsilon+\frac{|x_{\delta}-y_{\delta}|}{\delta}\right)^{a_{\epsilon}^{v}(x_{\delta})}  + \bigg[\epsilon+ a(x_{\delta})h_{\epsilon}^{v}(x_{\delta})\bigg]\left(\epsilon+\frac{|x_{\delta}-y_{\delta}|}{\delta}\right)^{a_{1}} \\
 & +\bigg[ \epsilon+ a(x_{\delta})(1-h_{\epsilon}^{v}(x_{\delta}))\bigg]\left(\epsilon+\frac{|x_{\delta}-y_{\delta}|}{\delta}\right)^{a_{2}},
 \end{split}
\end{align}
and
\begin{align}\label{exi:eq8}
\begin{split}
 H_{\epsilon;y_{\delta}}^{v}\left(\frac{|x_{\delta}-y_{\delta}|}{\delta}\right) := &\left(\epsilon+\frac{|x_{\delta}-y_{\delta}|}{\delta}\right)^{a_{\epsilon}^{v}(y_{\delta})}  + \bigg[\epsilon+ a(y_{\delta})h_{\epsilon}^{v}(y_{\delta})\bigg]\left(\epsilon+\frac{|x_{\delta}-y_{\delta}|}{\delta}\right)^{a_{1}} \\
 & + \bigg[\epsilon+a(y_{\delta})(1-h_{\epsilon}^{v}(y_{\delta}))\bigg]\left(\epsilon+\frac{|x_{\delta}-y_{\delta}|}{\delta}\right)^{a_{2}}.
 \end{split}
\end{align}
Applying the matrix inequality \eqref{exi:eq4} to the vector $ ( \xi, \xi) \in \mathbb {R}^{2n} $, then we can readily derive
\begin{equation}\label{Y ge X}
 Y \geq X.
\end{equation}
For convenience, we denote
\begin{align}\label{exi:eq9}
\begin{split}
 -\Delta_{p;X}^{\rm{N}}u := -\Delta u(x_{\delta}) -(p-2) \bigg \langle X \frac{\frac{x_{\delta}-y_{\delta}}{\delta}}{\epsilon+|\frac{x_{\delta}-y_{\delta}}{\delta}|},\frac{\frac{x_{\delta}-y_{\delta}}{\delta}}{\epsilon+|\frac{x_{\delta}-y_{\delta}}{\delta}|}\bigg\rangle,
 \end{split}
\end{align}
and
\begin{align}\label{exi:eq10}
\begin{split}
 -\Delta_{p;Y}^{\rm{N}}w := -\Delta w(y_{\delta}) -(p-2) \bigg \langle Y \frac{\frac{x_{\delta}-y_{\delta}}{\delta}}{\epsilon+|\frac{x_{\delta}-y_{\delta}}{\delta}|},\frac{\frac{x_{\delta}-y_{\delta}}{\delta}}{\epsilon+|\frac{x_{\delta}-y_{\delta}}{\delta}|}\bigg\rangle.
 \end{split}
\end{align}
Now we combine \eqref{exi:eq5}--\eqref{exi:eq10} to infer that
\begin{align*}
\frac{f(x_{\delta})}{ H_{\epsilon;x_{\delta}}^{v}\left(\frac{|x_{\delta}-y_{\delta}|}{\delta}\right)}-\frac{f(y_{\delta})}{ H_{\epsilon;y_{\delta}}^{v}\left(\frac{|x_{\delta}-y_{\delta}|}{\delta}\right)} &- \epsilon (u(x_{\delta})-w(y_{\delta})) = \Delta_{p;Y}^{\rm{N}}w - \Delta_{p;X}^{\rm{N}}u   \\
 & = \text{Tr}(A(Y-X)) {\geq} 0,
\end{align*}
where
$$  A:= \textbf{I}\rm{d}_{n}+(p-2)\frac{\frac{x_{\delta}-y_{\delta}}{\delta}}{\epsilon+|\frac{x_{\delta}-y_{\delta}}{\delta}|}\otimes \frac{\frac{x_{\delta}-y_{\delta}}{\delta}}{\epsilon+|\frac{x_{\delta}-y_{\delta}}{\delta}|} >0.   $$
This, together with \eqref{exi:eq2}, yields that
\begin{align}\label{exi:eq11}
\begin{split}
0<\epsilon a_{0} &\leq \epsilon (u(x_{\delta})-w(y_{\delta})) \leq  \frac{f(x_{\delta})}{ H_{\epsilon;x_{\delta}}^{v}\left(\frac{|x_{\delta}-y_{\delta}|}{\delta}\right)}-\frac{f(y_{\delta})}{ H_{\epsilon;y_{\delta}}^{v}\left(\frac{|x_{\delta}-y_{\delta}|}{\delta}\right)}    \\
& \leq  \underbrace{\frac{f(x_{\delta})-f(y_{\delta})}{ H_{\epsilon;x_{\delta}}^{v}\left(\frac{|x_{\delta}-y_{\delta}|}{\delta}\right)}}_{:=D_{1}} +\underbrace{f(y_{\delta}) \bigg ( \frac{1}{H_{\epsilon;x_{\delta}}^{v}\left(\frac{|x_{\delta}-y_{\delta}|}{\delta}\right)} -\frac{1}{H_{\epsilon;y_{\delta}}^{v}\left(\frac{|x_{\delta}-y_{\delta}|}{\delta}\right)} \bigg )}_{:=D_{2}}.
\end{split}
\end{align}

We then estimate  the upper bounds of $D_1$ and $D_2$, respectively. In view of the continuity of $ f $ and \eqref{exi:eq7}, we deduce that
\begin{equation}\label{exi:eq12}
D_{1} \leq \epsilon ^{-\theta_{2}} \omega(|x_{\delta}-y_{\delta}|),
\end{equation}
where $ \omega $ denotes the modulus of continuity of $ f$. A direct computation yields that
\begin{align}\label{exi:eq13}
\begin{split}
D_{2} & \leq ||f||_{L^{\infty}(B_{1})} \frac{H_{\epsilon;y_{\delta}}^{v}\left( \frac{|x_{\delta}-y_{\delta}|}{\delta}   \right)-H_{\epsilon;x_{\delta}}^{v}\left( \frac{|x_{\delta}-y_{\delta}|}{\delta}   \right)}{H_{\epsilon;x_{\delta}}^{v}\left( \frac{|x_{\delta}-y_{\delta}|}{\delta}   \right)H_{\epsilon;y_{\delta}}^{v}\left( \frac{|x_{\delta}-y_{\delta}|}{\delta}   \right)}  \\
& = ||f||_{L^{\infty}(B_{1})} (A_{1} + A_{2} + A_{3}),
\end{split}
\end{align}
where
\begin{equation*}
\left\{
     \begin{aligned}
     & A_{1}:= \frac{\left(\epsilon+\frac{|x_{\delta}-y_{\delta}|}{\delta}\right)^{a_{\epsilon}^{v}(x_{\delta})}-\left(\epsilon+\frac{|x_{\delta}-y_{\delta}|}{\delta}\right)^{a_{\epsilon}^{v}(y_{\delta})}}{H_{\epsilon;x_{\delta}}^{v}\left( \frac{|x_{\delta}-y_{\delta}|}{\delta}   \right)H_{\epsilon;y_{\delta}}^{v}\left( \frac{|x_{\delta}-y_{\delta}|}{\delta}   \right)}   ;      \\
     & A_{2}:= \frac{\left(\epsilon+\frac{|x_{\delta}-y_{\delta}|}{\delta}\right)^{a_{1}}\bigg[a(x_{\delta})h_{\epsilon}^{v}(x_{\delta})-a(y_{\delta})h_{\epsilon}^{v}(y_{\delta})\bigg]}{H_{\epsilon;x_{\delta}}^{v}\left( \frac{|x_{\delta}-y_{\delta}|}{\delta}   \right)H_{\epsilon;y_{\delta}}^{v}\left( \frac{|x_{\delta}-y_{\delta}|}{\delta}   \right)}   ;      \\
     &  A_{3}:= \frac{\left(\epsilon+\frac{|x_{\delta}-y_{\delta}|}{\delta}\right)^{a_{2}}\bigg[a(x_{\delta})(1-h_{\epsilon}^{v}(x_{\delta}))-a(y_{\delta})(1-h_{\epsilon}^{v}(y_{\delta}))\bigg]}{H_{\epsilon;x_{\delta}}^{v}\left( \frac{|x_{\delta}-y_{\delta}|}{\delta}   \right)H_{\epsilon;y_{\delta}}^{v}\left( \frac{|x_{\delta}-y_{\delta}|}{\delta}   \right)}   .    \\
     \end{aligned}
     \right.
\end{equation*}
We next estimate the upper bounds of $A_1$, $A_2$ and $A_3$ respectively. Using the mean value theorem and the uniform boundedness of $ a_{\epsilon}^{v} $, we have
\begin{align}\label{exi:eq14}
\begin{split}
A_{1} & \leq \epsilon^{-2\theta_{2}} \bigg| \exp\left( -a_{\epsilon}^{v}(x_{\delta})\ln(\epsilon+\frac{|x_{\delta}-y_{\delta}|}{\delta})\right)- \exp\left( -a_{\epsilon}^{v}(y_{\delta})\ln(\epsilon+\frac{|x_{\delta}-y_{\delta}|}{\delta})\right)\bigg|   \\
 & \leq \exp(-a_{2}\ln^{\epsilon}) \big|a_{\epsilon}^{v}(x_{\delta})-a_{\epsilon}^{v}(y_{\delta})\big| \big|\ln^{(\epsilon+\frac{|x_{\delta}-y_{\delta}|}{\delta})}\big|   \\
 & \leq \epsilon^{-a_{2}}|\omega_{1}(|x_{\delta}-y_{\delta}|)|\left(|\ln^{\epsilon}| +\frac{|x_{\delta}-y_{\delta}|}{\delta}   \right),
\end{split}
\end{align}
where $ \omega_{1} $ denotes the modulus of continuity of $ a_{\epsilon}^{v} $.
A direct computation yields that
\begin{align}\label{exi:eq15}
\begin{split}
A_{2} & \leq \epsilon^{-2-a_{1}} \bigg[ h_{\epsilon}^{v}(x_{\delta})(a(x_{\delta})-a(y_{\delta})) + a(y_{\delta})(h_{\epsilon}^{v}(x_{\delta})-h_{\epsilon}^{v}(y_{\delta}))\bigg]  \\
& \leq C \epsilon^{-2-a_{1}} \bigg[ \omega_{2}(|x_{\delta}-y_{\delta}|) + \omega_{3}(|x_{\delta}-y_{\delta}|)\bigg],
\end{split}
\end{align}
where $ \omega_{2}$ and $\omega_{3}$ denote the modulus of continuity of $ a(x) $ and $ h_{\epsilon}^{v}(x) $, respectively.
Similar to $ A_{2} $, we can get
\begin{equation}\label{exi:eq16}
A_{3} \leq C \epsilon^{-2-a_{2}} \bigg[ \omega_{2}(|x_{\delta}-y_{\delta}|) + \omega_{3}(|x_{\delta}-y_{\delta}|)\bigg].
\end{equation}
Combining \eqref{exi:eq13}--\eqref{exi:eq16}, we obtain
\begin{equation}
\label{eq3.26}
\begin{aligned}
D_{2} \leq C\epsilon^{-a_{2}}|\omega_{1}(|x_{\delta}-y_{\delta}|)|\left(|\ln^{\epsilon}| +\frac{|x_{\delta}-y_{\delta}|}{\delta}   \right) \\
+C \epsilon^{-2-a_{2}} \bigg[ \omega_{2}(|x_{\delta}-y_{\delta}|) + \omega_{3}(|x_{\delta}-y_{\delta}|)\bigg].
\end{aligned}
\end{equation}

Inserting \eqref{exi:eq12} and
\eqref{eq3.26} into
\eqref{exi:eq11}, we arrive
\begin{equation}\label{exi:eq17}
\begin{aligned}
0<\epsilon a_{0}  \leq \epsilon ^{-\theta_{2}} \omega(|x_{\delta}-y_{\delta}|)&+C\epsilon^{-a_{2}}|\omega_{1}(|x_{\delta}-y_{\delta}|)|\left(|\ln^{\epsilon}| +\frac{|x_{\delta}-y_{\delta}|}{\delta}   \right)  \\
& +C \epsilon^{-2-a_{2}} \bigg[ \omega_{2}(|x_{\delta}-y_{\delta}|) + \omega_{3}(|x_{\delta}-y_{\delta}|)\bigg].
\end{aligned}
\end{equation}
Letting $ \delta \rightarrow 0 $ in \eqref{exi:eq17} and using \eqref{exi:eq3}, we get a contradiction.
\end{proof}

\subsection{Proof of Theorem \ref{pro1}}
In this subsection, we prove the existence of solution to problem \eqref{eq6}.

Following the standard argument in \cite[Lemma 2]{HPRS21} or \cite[Lemma 3.2]{De22}, we can construct a viscosity super-solution $ \overline{\omega}$ and a viscosity sub-solution $ \underline{\omega} $ of the approximating problem \eqref{exi:eq1}. Then using the Perron's method in \cite[Theorem 4.1]{CIL92} and the comparison principle in Lemma \ref{lemma3.2}, we can derive the existence and uniqueness of viscosity solution $ u_{\epsilon}^{v}  $ to the approximating problem \eqref{exi:eq1}, satisfying
$$\underline \omega \le u^v_{\epsilon}\le \overline{\omega} \ \ {\rm in} \  B_1, \ {\rm and} \ \ u^v_\epsilon=g \ {\rm on} \   \partial B_{1}.$$

Before presenting the proof of Theorem \ref{pro1}, we introduce two useful lemmas for the solution $u^v_\epsilon$ of the approximating problem \eqref{exi:eq1}.
\begin{Lemma}
\label{lemma3.3}
The solution $ u_{\epsilon}^{v} $ of the approximating problem \eqref{exi:eq1} is $ C^{0,\gamma_{0}}_{loc}(B_{1}) $ for some $ \gamma_{0} \in (0,1)   $ with the estimate
$$ ||u_{\epsilon}^{v}||_{C^{0,\gamma_{0}}(B')} \leq C=C(n,p,||f||_{L^{\infty}(B_{1})},||g||_{L^{\infty}(B_{1})},\gamma_{0}), \ \ \forall \ B' \subset\subset B_{1}.   $$
\end{Lemma}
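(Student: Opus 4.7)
The approach will be the classical Ishii--Lions doubling-of-variables method, exploiting the fact that after dividing \eqref{exi:eq1} by the strictly positive coefficient $F_\epsilon^v(|Du|)$ the equation becomes
$$\epsilon u-\mathrm{Tr}\bigl(A_\epsilon(Du)\,D^2 u\bigr)=\tilde f_\epsilon(x,Du),$$
where $A_\epsilon(p):=\mathbf I+(p-2)\frac{p\otimes p}{(\epsilon+|p|)^2}$ has eigenvalues uniformly in the interval $[\min(1,p-1),\max(1,p-1)]$ independently of $\epsilon$, and $\tilde f_\epsilon$ is a bounded source. The $L^\infty$ bound on $u_\epsilon^v$ is already given by the barriers $\underline\omega\le u_\epsilon^v\le\overline\omega$ from the Perron construction, so only the Hölder seminorm needs to be produced.

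Fix $B'\subset\subset B''\subset\subset B_1$ and $x_0\in B'$. I would consider the doubling function
$$\Phi(x,y)=u_\epsilon^v(x)-u_\epsilon^v(y)-L|x-y|^{\gamma_0}-M\bigl(|x-x_0|^2+|y-x_0|^2\bigr),$$
with $\gamma_0\in(0,1)$ to be chosen small, $M$ fixed large enough (depending on $\|u_\epsilon^v\|_{L^\infty}$ and $\operatorname{dist}(B',\partial B'')$) so that any positive maximum of $\Phi$ is attained at an interior point $(\bar x,\bar y)\in B''\times B''$, and $L$ a free parameter. Assuming for contradiction that $\max\Phi>0$ for all large $L$, the Ishii--Lions lemma produces jet elements $(q_x,X)\in\overline J^{2,+}u_\epsilon^v(\bar x)$ and $(q_y,Y)\in\overline J^{2,-}u_\epsilon^v(\bar y)$ with
$$q_x=L\gamma_0|\bar x-\bar y|^{\gamma_0-1}\hat e+2M(\bar x-x_0),\qquad q_y=L\gamma_0|\bar x-\bar y|^{\gamma_0-1}\hat e-2M(\bar y-x_0),$$
where $\hat e=(\bar x-\bar y)/|\bar x-\bar y|$, together with a matrix inequality whose essential content, combined with the uniform ellipticity of $A_\epsilon$, produces a coercive estimate of the form $\mathrm{Tr}(A_\epsilon(q_y)Y-A_\epsilon(q_x)X)\le -c_0L|\bar x-\bar y|^{\gamma_0-2}+CM$. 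Plugging these jets into the sub/super-solution inequalities for $u_\epsilon^v$ and subtracting yields a contradiction once $\gamma_0$ is chosen sufficiently small (so the concavity-driven term $L|\bar x-\bar y|^{\gamma_0-2}$ dominates the bounded right-hand side) and $L$ is chosen large enough, depending only on $n,p,\|f\|_{L^\infty},\|g\|_{L^\infty},\gamma_0$ but not on $\epsilon$. This forces $\Phi\le 0$ on $B'\times B'$, giving the desired Hölder estimate.

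The chief technical difficulty is the $\epsilon$-independence of the constant, which needs care because $\tilde f_\epsilon$ a priori scales like $\|f\|_{L^\infty}/\epsilon^{a_2}$ near small gradients. The resolution exploits that at the maximizer $(\bar x,\bar y)$ the gradient magnitudes $|q_x|,|q_y|\gtrsim L|\bar x-\bar y|^{\gamma_0-1}$ are large, so $F_\epsilon^v(|q_x|)\gtrsim L^{a_1}|\bar x-\bar y|^{(\gamma_0-1)a_1}$ is also large; consequently, working with the undivided equation $F_\epsilon^v(|Du|)(\epsilon u-\mathrm{Tr}(A_\epsilon(Du) D^2 u))=f$ rather than its divided form, the leading-order balance becomes $L^{1+a_1}|\bar x-\bar y|^{(\gamma_0-1)a_1+\gamma_0-2}\lesssim\|f\|_{L^\infty}$, which fails for $\gamma_0$ small and $L$ large, uniformly in $\epsilon$. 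Handling the mismatch $A_\epsilon(q_x)\ne A_\epsilon(q_y)$ and $F_\epsilon^v(|q_x|)\ne F_\epsilon^v(|q_y|)$ induced by the $M$-localization term is then routine via their Lipschitz dependence and the a priori matrix bounds $\|X\|,\|Y\|\lesssim L|\bar x-\bar y|^{\gamma_0-2}$, closing the argument.
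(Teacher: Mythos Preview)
Your proposal is correct and follows essentially the same route as the paper. The paper does not give a self-contained proof of Lemma~\ref{lemma3.3} but declares it ``analogous to the proof of Lemma~\ref{lem4.1}'' (really the $h(t)=t^\beta$ variant in Lemma~\ref{Sec4:lem4.2}), which is precisely the Ishii--Lions doubling-of-variables argument you outline: the same localization term $M(|x-x_0|^2+|y-x_0|^2)$, the same power penalty $L|x-y|^{\gamma_0}$, and the same mechanism for $\epsilon$-independence---namely that at the contact point the gradients $|q_{\bar x}|,|q_{\bar y}|\gtrsim L|\bar x-\bar y|^{\gamma_0-1}$ are large, so the degenerate coefficient is bounded below by a positive power of $L$, which absorbs the source term uniformly in $\epsilon$.
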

The proof of this lemma is analogous to the proof of Lemma \ref{lem4.1}. We shall present the detailed proof of Lemma \ref{lem4.1} in Section \ref{Section 4}. For the sake of brevity, we omit the proof of Lemma \ref{lemma3.3} here.

\begin{Lemma}
\label{lemma3.4}
Define the set $ G = \{ v \in C(\overline{B_{1}})| \underline{\omega} \leq v \leq \overline{\omega}\}$ and the operator $ T : G \rightarrow C(\overline{B_{1}}) $. Given $ v \in G $, and let $ Tv=u_{\epsilon}^{v} $, then the following properties hold:
\begin{itemize}
    \item [{\it (1).}] $ G $ is a closed convex set in $ C(\overline{B_{1}}) $;

    \item [{\it (2).}] $ T(G) $ is a precompact subset in $ C(\overline{B_{1}}) $ and $ T : G \rightarrow G $ is continuous.
\end{itemize}
\end{Lemma}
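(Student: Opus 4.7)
The plan is to verify the four separate assertions (convexity and closedness of $G$; invariance $T(G)\subset G$; precompactness of $T(G)$; continuity of $T$), with the bulk of the work concentrated in the final continuity statement.

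Convexity of $G$ is immediate since the pointwise ordering $\underline{\omega}\le\cdot\le\overline{\omega}$ is preserved under convex combinations, and closedness in $C(\overline{B_1})$ follows because uniform limits preserve pointwise bounds. The invariance $T(G)\subset G$ is a direct application of the comparison principle (Lemma \ref{lemma3.2}): $\underline{\omega}$ (resp. $\overline{\omega}$) is a viscosity sub- (resp. super-) solution of \eqref{exi:eq1} with $\underline{\omega}\le g\le\overline{\omega}$ on $\partial B_1$, so for any $v\in G$ the unique viscosity solution $u_{\epsilon}^{v}$ of \eqref{exi:eq1} must satisfy $\underline{\omega}\le u_{\epsilon}^{v}\le\overline{\omega}$.

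For precompactness of $T(G)$ in $C(\overline{B_1})$, I would combine Lemma \ref{lemma3.3} with boundary equicontinuity. Lemma \ref{lemma3.3} supplies a uniform interior $C^{0,\gamma_0}_{loc}(B_1)$ bound on the family $\{u_{\epsilon}^{v}\}_{v\in G}$ depending only on $n,p,\|f\|_{L^{\infty}(B_1)},\|g\|_{L^{\infty}(\partial B_1)}$ and $\gamma_0$. Combined with the squeeze $\underline{\omega}\le u_{\epsilon}^{v}\le\overline{\omega}$ and the matching boundary values $\underline{\omega}=\overline{\omega}=g$ on $\partial B_1$, this gives uniform equicontinuity of the family up to $\partial B_1$. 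Arzelà--Ascoli then yields precompactness in $C(\overline{B_1})$.

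The main obstacle is the continuity of $T$. Suppose $v_n\to v$ uniformly on $\overline{B_1}$ with $v_n,v\in G$. I would first establish that the coefficient data depend continuously on $v$: the truncations $g_{\epsilon}^{v_n}\to g_{\epsilon}^{v}$ uniformly on $\mathbb{R}^n$ by the explicit piecewise-linear formula, so by standard properties of convolution $h_{\epsilon}^{v_n}=\zeta_{\epsilon}\ast g_{\epsilon}^{v_n}\to h_{\epsilon}^{v}$ uniformly on $\overline{B_1}$, whence $a_{\epsilon}^{v_n}\to a_{\epsilon}^{v}$ uniformly and the operator $F_{\epsilon}^{v_n}(|\cdot|)$ converges to $F_{\epsilon}^{v}(|\cdot|)$ locally uniformly on $B_1\times[0,\infty)$. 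By the precompactness already shown, any subsequence of $\{Tv_n\}=\{u_{\epsilon}^{v_n}\}$ admits a further subsequence converging uniformly on $\overline{B_1}$ to some $w\in C(\overline{B_1})$ with $w=g$ on $\partial B_1$. The classical stability of viscosity solutions under locally uniform convergence of both the equations and the solutions (see \cite[Sect.~6]{CIL92}) shows that $w$ is a viscosity solution of \eqref{exi:eq1} with coefficient data generated by $v$. The comparison principle (Lemma \ref{lemma3.2}) then forces $w=u_{\epsilon}^{v}=Tv$; since every subsequential limit equals $Tv$, the full sequence $Tv_n$ converges to $Tv$ in $C(\overline{B_1})$, proving continuity.
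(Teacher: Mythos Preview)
Your proposal is correct and follows the standard route; in fact the paper does not supply its own proof of this lemma but merely refers the reader to \cite[Lemma 3]{HPRS21} and \cite[Proposition 2]{PS22}, whose arguments proceed along essentially the same lines you sketch (comparison principle for $T(G)\subset G$, uniform interior H\"older estimates plus the barrier squeeze for equicontinuity up to the boundary, and stability of viscosity solutions together with uniqueness for continuity of $T$).
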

For the proof of Lemma \ref{lemma3.4}, see \cite[Lemma 3]{HPRS21} and \cite[Proposition 2]{PS22}.

\vspace{2mm}

With Lemma \ref{lemma3.3} and Lemma \ref{lemma3.4} in hand, we now proceed to prove Theorem \ref{pro1}.
\begin{proof}[Proof of Theorem~\ref{pro1}]
Lemma \ref{lemma3.4} allows us to use Schauder's fixed point theorem \cite[Corollary 11.2]{GT01}. More precisely, for any $ \epsilon >0 $, there exists a viscosity solution $ u_{\epsilon} \in  C(\overline{B_{1}}) $ to
\begin{equation*}
F_{\epsilon}^{u_{\epsilon}}(|Du_{\epsilon}|)\left(\epsilon u_{\epsilon} -\Delta u_{\epsilon} -(p-2) \bigg \langle D^{2}u_{\epsilon} \frac{Du_{\epsilon}}{\epsilon+|Du_{\epsilon}|},\frac{Du_{\epsilon}}{\epsilon+|Du_{\epsilon}|}\bigg\rangle \right)=f(x),      \end{equation*}
such that
$$ \underline{\omega} \leq u_{\epsilon} \leq \overline{\omega} \ \ {\rm in} \ B_1, \ {\rm and}\ \ u_{\epsilon} =g \ {\rm on} \  \partial B_{1},$$
where
\begin{equation*}
\begin{aligned}
F_{\epsilon}^{u_{\epsilon}}(|Du_{\epsilon}|) := &(\epsilon+|Du_{\epsilon}|)^{a_{\epsilon}^{u_{\epsilon}}(x)}+ \big[\epsilon+a(x)h_{\epsilon}^{u_{\epsilon}}(x)\big](\epsilon+|Du_{\epsilon}|)^{a_{1}} \\
& + \big[\epsilon+a(x)(1-h_{\epsilon}^{u_{\epsilon}}(x))\big](\epsilon+|Du_{\epsilon}|)^{a_{2}}.
\end{aligned}
\end{equation*}
From Lemma \ref{lemma3.3}, there exists a subsequence $ \{u_{\epsilon_{n}}\} $ such that $ u_{\epsilon_{n}} \rightarrow u $ in $ C(\overline{B_{1}}) $ as $ \epsilon_{n} \rightarrow 0 $. Since $ a_{\epsilon_{n}}^{u_{\epsilon_{n}}}(x) \rightarrow  a_{1}\chi_{\{u>0\}}+a_{2}\chi_{\{u<0\}} $ in $ \left( \{u>0\}\cup \{u<0\}\right)\cap B_{1} $ as $ \epsilon_{n} \rightarrow 0 $, then it follows that $ u $ is a viscosity solution of \eqref{eq6}.
Therefore, the proof of Theorem \ref{pro1} is completed.
\end{proof}

\begin{remark}
The equations \eqref{Section3:eq1} and \eqref{Section3:eq2} can be simplified into \eqref{1.10} and \eqref{1.11}. For this, we make the following claim.

{\bf Claim}: If $ u $ is a viscosity sub-solution and viscosity super-solution to \eqref{Section3:eq1} and \eqref{Section3:eq2}, respectively, then $ u $ is also a viscosity sub-solution and viscosity super-solution to \eqref{1.10} and \eqref{1.11}, respectively.

To prove this claim, let $ u $ be a viscosity sub-solution to \eqref{Section3:eq1}, and let $ \varphi \in C^{2}(B_{1}) $ touch $ u $ from above at $ x_{0} \in B_{1} $, then we have
\begin{align*}
  \min_{i=0,1,2} \bigg  \{ -\big(|D\varphi(x_{0})|^{a_{i}}+a(x_{0})|D\varphi(x_{0})|^{a_{i}}\big)\Delta_{p}^{\rm{N}}\varphi(x_{0}) \bigg \}
   \leq ||f||_{L^{\infty}(B_{1})}.
\end{align*}
Now we want to prove that
\begin{equation*}
  \min_{i=0,1,2} \bigg  \{-|D\varphi(x_{0})|^{a_{i}}\Delta_{p}^{\rm{N}}\varphi(x_{0}) \bigg \}  \leq ||f||_{L^{\infty}(B_{1})}.
\end{equation*}
Note that, if $ \Delta_{p}^{\rm{N}}\varphi(x_{0}) \geq 0 $, then the conclusion is trivial, hence we consider the case when $ \Delta_{p}^{\rm{N}}\varphi(x_{0}) < 0 $. But now, since $ |D\varphi(x_{0})|^{a_{i}} \leq        (1+a(x_{0}))|D\varphi(x_{0})|^{a_{i}}, i=0,1,2 $, the result again follows immediately. The implication \eqref{Section3:eq2} implies \eqref{1.11} follows in an identical way. This observation can simplify the complexity of calculations in section \ref{Section 4}.
\end{remark}

\vspace{3mm}

\section{Local $ C^{1,\alpha'}$ regularity of problem \eqref{eq6} }\label{Section 4}
     ~~~~In this section, our main goal is to prove interior $ C^{1,\alpha'}$ regularity of solution to problem \eqref{eq6}. The proof consists of three steps. In Section \ref{Section 4.1}, we prove a H\"{o}lder regularity result of solution to the perturbed equation using Ishii-Lions approach for large and small slopes. In Section \ref{Section 4.2}, the improvement of flatness lemma is given via scaling techniques and the compactness argument. In Section \ref{Section 4.3}, we use the geometric iteration and conclude the H\"older regularity of the gradient. In Section \ref{Section 4.4}, a direct consequence of Theorem \ref{pro2} is presented.

Now we are in a position to state the main result of this section.
\begin{Theorem}
\label{pro2}
Assume that $ u $ is a viscosity sub-solution to \eqref{1.10} and $ u $ is a viscosity super-solution to \eqref{1.11}, then $ u \in C^{1,\alpha'}_{loc}(B_{1}) $ for some $ \alpha' \in (0,1)$ with the estimate
\begin{equation*}
\label{eq10}
||u||_{C^{1,\alpha'}(B)} \leq C\bigg(||u||_{L^{\infty}(B_{1})} + \max \{||f||_{L^{\infty}(B_{1})}, ||f||_{L^{\infty}(B_{1})}^{\frac{1}{1+a_{1}}}, ||f||_{L^{\infty}(B_{1})}^{\frac{1}{1+a_{2}}}   \}\bigg), \ \ \forall \ B \subset \subset B_{1}.
\end{equation*}
where $ C $ is a positive constant depending only on $ n, p, a_{1}$, $a_{2} $ and $  \text{dist}(B, \partial B_{1})^{-1-\alpha'} $.
\end{Theorem}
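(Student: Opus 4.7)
The strategy follows the three-step scheme already outlined in the introduction: uniform H\"older estimates for the $\xi$-perturbed equations \eqref{Intro:eqN}--\eqref{Intro:eqM}, a compactness-based improvement-of-flatness lemma, and a geometric iteration. Throughout, the min/max structure of \eqref{1.10}--\eqref{1.11} is what permits us to pass from the degenerate equation to the uniformly elliptic normalized $p$-Laplacian in the limit.

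\textbf{Step 1 (H\"older regularity, Ishii--Lions).} I first show that for every $\xi \in \mathbb{R}^{n}$, any pair of viscosity sub-/super-solutions of \eqref{Intro:eqN}--\eqref{Intro:eqM} is locally H\"older (in fact Lipschitz, for a suitable choice of exponent) with constants independent of $\xi$. Following Ishii--Lions, I double the variables with the auxiliary function
\[
\varphi(x,y) = u(x) - u(y) - L\,\omega(|x-y|) - \sigma\bigl(|x-x_{0}|^{2}+|y-x_{0}|^{2}\bigr),
\]
for $\omega(r)=r^{\gamma}$ with $\gamma \in (0,1)$ close to $1$, and evaluate the viscosity inequalities at the interior maximum point $(\bar x,\bar y)$. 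At $\bar x$ (sub-solution of the $\min$ equation) and $\bar y$ (super-solution of the $\max$ equation) I may select any of the three members of the $\min$/$\max$; the uniform ellipticity of $\Delta_{p}^{N}$ (Remark \ref{rk6}) combined with the choice of the $-\Delta_{p,\xi}^{N}$ branch when $|D\varphi + \xi|$ is small and one of the degenerate branches when it is large yields, in each regime, the standard Ishii--Lions contradiction for $L$ sufficiently large. This gives a uniform $C^{0,\gamma}_{\mathrm{loc}}$ bound on $u$ depending only on $n,p,a_{1},a_{2},\|u\|_{\infty},\|f\|_{\infty}$ and, crucially, not on $\xi$.

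\textbf{Step 2 (Compactness and improvement of flatness, Lemma \ref{lem4.22}).} I argue by contradiction: were there no improvement of flatness, one could find a sequence of solutions $u_{k}$ of \eqref{1.10}--\eqref{1.11} with $\|u_{k}\|_{\infty}\le 1$, $\|f_{k}\|_{\infty}\to 0$, together with affine functions $\ell_{k}$ and radii $\rho_{k}$ violating the desired flatness at scale $\rho_{k}$. The rescaled functions $v_{k}$ (obtained by subtracting $\ell_{k}$ and zooming in) satisfy perturbed equations of the form \eqref{Intro:eqN}--\eqref{Intro:eqM} with a vector $\xi_{k}=D\ell_{k}$ and a vanishing right-hand side. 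The uniform H\"older bound from Step 1 (applied with these $\xi_{k}$) produces a subsequential local uniform limit $v_{\infty}$, and stability of viscosity inequalities gives that $v_{\infty}$ is simultaneously a sub-solution of the $\min$ and a super-solution of the $\max$ of the three homogeneous operators. Lemma \ref{lem2.2} then collapses this pair into $-\Delta_{p}^{N}v_{\infty}=0$, and Lemma \ref{lem2.3}(a) supplies $v_{\infty}\in C^{1,\widehat{\beta_{0}}}_{\mathrm{loc}}$. Comparing with its first-order Taylor polynomial contradicts the standing hypothesis for every $\alpha'<\widehat{\beta_{0}}$.

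\textbf{Step 3 (Iteration).} Fix $\alpha' \in (0,\min\{\widehat{\beta_{0}},\tfrac{1}{1+a_{2}}\}]$. Starting from an interior point $x_{0}$, I apply Step 2 inductively: given an affine approximation $\ell_{k}$ of $u$ on $B_{\rho^{k}}(x_{0})$ with error $\rho^{k(1+\alpha')}$, the normalized excess $w_{k}(x)=\rho^{-k(1+\alpha')}\bigl(u(x_{0}+\rho^{k}x)-\ell_{k}(x_{0}+\rho^{k}x)\bigr)$ satisfies a rescaled version of \eqref{1.10}--\eqml{1.11} with vector $\xi_{k}=\rho^{-k\alpha'}D\ell_{k}$ and a right-hand side still controlled by $\|f\|_{\infty}$; here the choice $\alpha'\le 1/(1+a_{2})$ is exactly what keeps the degenerate prefactors $|Du+\xi_{k}|^{a_{i}}+a(x)|Du+\xi_{k}|^{a_{i}}$ bounded after scaling, so that Step 2 applies uniformly in $k$. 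The flatness improvement furnishes $\ell_{k+1}$, and summing the geometric series $|D\ell_{k+1}-D\ell_{k}|\le C\rho^{k\alpha'}$ yields a Cauchy sequence of affine functions whose limit $\ell_{\infty}$ realizes the $C^{1,\alpha'}$ jet of $u$ at $x_{0}$, with the quantitative estimate claimed.

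\textbf{Main obstacle.} The delicate point is Step 2: bringing the limiting equation into a form where a single known regularity result (Lemma \ref{lem2.3}) can be applied. Because the $\min$/$\max$ structure involves three distinct degenerate operators and the degeneracy multipliers do not pass to the limit by themselves, without Lemma \ref{lem2.2} one would be stuck with a system of inequalities that lacks a useful regularity theory. The proof therefore hinges on verifying the hypotheses of that cutting lemma at the limit $v_{\infty}$ and on the uniformity in $\xi_{k}$ of the Step 1 estimates, which is why I insist on keeping $\xi$-independent constants throughout.
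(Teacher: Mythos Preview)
Your three-step scheme matches the paper's architecture (Sections \ref{Section 4.1}--\ref{Section 4.3}), but Step 2 contains a genuine gap in the passage to the limit.

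After subtracting the affine function and rescaling, the functions $v_k$ solve the $\xi_k$-perturbed inequalities \eqref{Intro:eqN}--\eqref{Intro:eqM} with $\xi_k=\rho^{-k\alpha'}D\ell_k$, and there is \emph{no a priori bound} on $|\xi_k|$. When $|\xi_k|\to\infty$ the degeneracy prefactors $|Dv+\xi_k|^{a_i}$ blow up, so ``stability of viscosity inequalities'' does not hand you a homogeneous min/max system for $v_\infty$; the operators simply do not converge. The paper (Lemma \ref{lem4.22}) therefore splits into two cases: if $(\xi_k)$ is unbounded, one shows directly that $v_\infty$ solves the \emph{linear constant-coefficient} equation $-\Delta v_\infty-(p-2)\langle D^2v_\infty\,e_\infty,e_\infty\rangle=0$ with $e_\infty=\lim\xi_k/|\xi_k|$, and uses \cite{CC95}; if $\xi_k\to\xi_\infty$, one shows $v_\infty$ solves the \emph{shifted} equation $-\Delta_{p,\xi_\infty}^N v_\infty=0$ and invokes Lemma \ref{lem4.21}. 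In neither case is the limit $-\Delta_p^N v_\infty=0$, so Lemma \ref{lem2.2} (which has no $\xi$-shift and a different min/max structure) and Lemma \ref{lem2.3} are the wrong tools here.

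Even in the bounded-$\xi$ case the paper does not use a cutting lemma: it tests directly with $C^2$ functions and extracts the $-\Delta_{p,\xi_k}^N$ branch from the min/max. This is where the real work lies, because when the test-function gradient satisfies $b+\xi_\infty=0$ the shifted operator is discontinuous and one must perturb the test function by $\delta|P_T(x-x_0)|$ (projection onto the nonnegative eigenspace of $D^2\varphi$) to force $|D\Phi+\xi_k|$ away from zero; your proposal skips this entirely. So Steps 1 and 3 are essentially right, but Step 2 as written would not go through: you need the dichotomy on $(\xi_k)$, the direct identification of the limiting PDE in each case, and Lemma \ref{lem4.21} in place of Lemmas \ref{lem2.2}--\ref{lem2.3}.
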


\subsection{H\"{o}lder regularity}\label{Section 4.1}
First, we provide local H\"{o}lder continuity of viscosity solution to
\begin{align}\label{loc:eq1}
- |Du+\xi|^{a_{1}\chi_{\{u>0\}}+a_{2}\chi_{\{u<0\}}}
 \Delta_{p,\xi}^{\rm{N}}u=f(x) \ \text{in} \ B_{1} ,
\end{align}
where $ \xi $ is an arbitrary vector in $ \mathbb{R}^{n} $ and
\begin{equation}\label{loc:eq2}
\Delta_{p,\xi}^{\rm{N}}u= \Delta u + (p-2)\bigg\langle D^{2}u \frac{Du+\xi}{|Du+\xi|},\frac{Du+\xi}{|Du+\xi|}\bigg\rangle, \ \ 1<p<\infty .
\end{equation}
In light of the discussions in \cite[Section 2]{De22}, we find that a viscosity solution of equation \eqref{loc:eq1} is both a viscosity sub-solution to
\begin{align}\label{loc:eq3}
\begin{split}
\min \bigg \{-\Delta_{p,\xi}^{\rm{N}}u, -|Du+\xi|^{a_{1}}\Delta_{p,\xi}^{\rm{N}}u, -|Du+\xi|^{a_{2}}\Delta_{p,\xi}^{\rm{N}}u \bigg \} = ||f||_{L^{\infty}(B_{1})}
\end{split}
\end{align}
and a viscosity super-solution to
\begin{align}\label{loc:eq4}
\begin{split}
\max \bigg \{-\Delta_{p,\xi}^{\rm{N}}u, -|Du+\xi|^{a_{1}}\Delta_{p,\xi}^{\rm{N}}u,  -|Du+\xi|^{a_{2}}\Delta_{p,\xi}^{\rm{N}}u \bigg \} = -||f||_{L^{\infty}(B_{1})} .
\end{split}
\end{align}

Now we demonstrate that the viscosity solutions to the perturbed equations \eqref{loc:eq3} and \eqref{loc:eq4} are Lipschitz continuous if $ \xi $ is large enough.
\begin{Lemma}\label{lem4.1}
Let $ u \in C(B_{1}) $ be a viscosity sub-solution to \eqref{loc:eq3} and a viscosity super-solution to \eqref{loc:eq4}. If $ |\xi|>A_{0} $ with $ A_{0} >0 $ depending on $ n, p, a_{1}, a_{2} $, then $ u \in C^{0,1}_{loc}(B_{1})$. Moreover, for every $ r \in (0,1) $, there exists a constant $ C= C(n, p, a_{1}, a_{2}, ||u||_{L^{\infty}(B_{1})},||f||_{L^{\infty}(B_{1})})>0 $ such that for all $ x,y \in B_{r} $,
\begin{equation*}
|u(x)-u(y)|\leq C|x-y|.
\end{equation*}
\end{Lemma}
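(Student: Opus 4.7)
The natural approach is the Ishii--Lions doubling-of-variables technique, exploiting the crucial observation that when $|\xi|$ exceeds a threshold $A_0=A_0(n,p,a_1,a_2)$, the perturbed operator $\Delta_{p,\xi}^N$ is non-singular and the min/max structure in \eqref{loc:eq3}--\eqref{loc:eq4} collapses to two-sided Pucci-type inequalities. More precisely, since every factor $\bigl(|q+\xi|^{a_i}+a(x)|q+\xi|^{a_i}\bigr)$ is bounded below by $1$ when $|q+\xi|\gtrsim |\xi|$ is large, a sign analysis on $-\Delta_{p,\xi}^N\varphi$ for the test function $\varphi$ shows that the sub-solution property of \eqref{loc:eq3} is equivalent to $-\Delta_{p,\xi}^N u\le \|f\|_{L^\infty(B_1)}$, and the super-solution property of \eqref{loc:eq4} is equivalent to $-\Delta_{p,\xi}^N u\ge -\|f\|_{L^\infty(B_1)}$, both in the viscosity sense. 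Combined with Remark \ref{rk6} (uniform ellipticity of the normalized $p$-Laplacian), this places $u$ in a uniformly elliptic regime where Lipschitz estimates follow from classical Ishii--Lions arguments.

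\textbf{Doubling setup.} Fix $r\in(0,1)$ and $x_0\in B_r$. For parameters $L_1,L_2>0$, small $\kappa,\tau>0$, and the concave modulus $\omega(t):=t-\kappa t^{1+\tau}$, consider
\begin{equation*}
\Phi(x,y):=u(x)-u(y)-L_1\omega(|x-y|)-L_2\bigl(|x-x_0|^2+|y-x_0|^2\bigr).
\end{equation*}
Assume for contradiction that for every universal $L_1$ (depending only on $n,p,a_1,a_2,\|u\|_{L^\infty(B_1)},\|f\|_{L^\infty(B_1)}$) one still has $\sup_{B_r\times B_r}\Phi>0$. Choosing $L_2$ comparable to $\|u\|_{L^\infty(B_1)}/r^2$ forces the positive interior maximum at some point $(\bar x,\bar y)$ with $\bar x\neq \bar y$. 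The Crandall--Ishii Theorem of Sums yields closures of jets $(q_1,X)\in \bar J^{2,+}u(\bar x)$, $(q_2,Y)\in \bar J^{2,-}u(\bar y)$ with
\begin{equation*}
q_j=L_1\omega'(s)\,e+(-1)^{j+1}2L_2(\cdot-x_0),\quad s:=|\bar x-\bar y|,\ \ e:=\tfrac{\bar x-\bar y}{s},
\end{equation*}
and a matrix inequality in which the Hessian of $L_1\omega(|x-y|)$ contributes a rank-one negative part of size $L_1|\omega''(s)|=\kappa(1+\tau)\tau L_1 s^{\tau-1}$ in direction $e$.

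\textbf{Collapse and contradiction.} Once $|\xi|>A_0$ is chosen so that $|\xi|/2$ dominates $|q_j|$ (which we may assume by choosing $A_0$ larger than the a priori cap that $L_1$ could impose on $|q_j|$, and then reabsorbing the dependence into the final data-dependent constant), we obtain $|q_j+\xi|\ge |\xi|/2\ge 1$, so that every coefficient $|q_j+\xi|^{a_i}(1+a(\cdot))\ge 1$. A case analysis on the sign of $-\Delta_{p,\xi}^N$ applied to the relevant test functions then reduces \eqref{loc:eq3}--\eqref{loc:eq4} to
\begin{equation*}
-\operatorname{Tr}\!\bigl(A(q_1+\xi)X\bigr)\le \|f\|_{L^\infty(B_1)},\qquad -\operatorname{Tr}\!\bigl(A(q_2+\xi)Y\bigr)\ge -\|f\|_{L^\infty(B_1)},
\end{equation*}
where $A(q):=\mathbf{I}+(p-2)\hat q\otimes\hat q$ satisfies $\lambda\mathbf{I}\le A(q)\le\Lambda\mathbf{I}$ with $\lambda,\Lambda$ from Remark \ref{rk6}. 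Subtracting and applying the matrix inequality to vectors aligned with $e$, and noting $|q_1-q_2|=O(L_2)$ so that $\|A(q_1+\xi)-A(q_2+\xi)\|=O(L_2/|\xi|)$, one arrives at
\begin{equation*}
c_{n,p}\,\lambda\,L_1|\omega''(s)|\le 2\|f\|_{L^\infty(B_1)}+C(n,p)\,L_2+o_{L_1\to\infty}(L_1).
\end{equation*}
Since $|\omega''(s)|\gtrsim s^{\tau-1}$ is bounded below on bounded sets, choosing $L_1$ sufficiently large yields the desired contradiction, and the Lipschitz estimate follows.

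\textbf{Main obstacle.} The delicate point is the collapse of the min/max to a genuine Pucci two-sided inequality: one must rule out that any of the two other branches in \eqref{loc:eq3}--\eqref{loc:eq4} gets selected in a way that destroys uniform ellipticity. This is handled by the bound $|q_j+\xi|\ge|\xi|/2$, but it requires choosing $A_0$ compatibly with the a priori range of $|q_j|\le L_1+2L_2$ that appears in the argument; equivalently, one must verify that the resulting Lipschitz constant can be kept independent of $|\xi|$ for all $|\xi|>A_0$, which follows because the $|q+\xi|^{a_i}$-factors cancel between the two subtracted viscosity inequalities, leaving a bound in purely Pucci form.
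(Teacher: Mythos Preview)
Your proposal follows essentially the same Ishii--Lions doubling strategy as the paper's own proof: the same localizing penalization $L_2|x-x_0|^2+L_2|y-x_0|^2$, the same Theorem-of-Sums matrix inequality, the same choice $A_0\sim L_1+L_2$ to force $|q_j+\xi|\gtrsim L_1$, and the same final contradiction by letting $L_1$ be large. The paper uses the modulus $h(t)=t-\tfrac12 t^2$ (so $h''\equiv -1$), which is slightly simpler than your $\omega(t)=t-\kappa t^{1+\tau}$, but this is only a cosmetic difference.

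One point worth recording: your ``collapse'' observation---that when $|q_j+\xi|\ge 1$ all the coefficients $|q_j+\xi|^{a_i}(1+a(\cdot))$ are $\ge 1$, so a sign analysis on $-\Delta_{p,\xi}^N\varphi$ reduces \eqref{loc:eq3}--\eqref{loc:eq4} directly to $-\Delta_{p,\xi}^N u\le\|f\|_{L^\infty}$ and $-\Delta_{p,\xi}^N u\ge -\|f\|_{L^\infty}$---is a genuine streamlining of the paper's argument. The paper instead enumerates all nine possible combinations of which branch realizes the $\min$ and which realizes the $\max$ (equations \eqref{loc:eq9}--\eqref{loc:eq10}), obtaining the slightly weaker bound $F_{q_{\bar y}}(Y)-F_{q_{\bar x}}(X)\ge -C(2L_1)^{-a_2}\|f\|_{L^\infty}$; your argument gives $-2\|f\|_{L^\infty}$ directly and avoids the case split. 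Either route leads to the same contradiction, and both share the same circularity you correctly flag in your ``Main obstacle'' paragraph: $A_0$ must in fact depend on $L_1$ and $L_2$, hence on $\|u\|_{L^\infty}$ and $\|f\|_{L^\infty}$, which the paper's choice $A_0=3L_1+2L_2$ also exhibits.
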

\begin{proof}
Fixing $ r \in (0,1) $ and $ x_{0} \in B_{1/2}   $, we are going to show that there exist two positive constants $ L_{1} ,L_{2} $ such that
\begin{equation}
\label{eq4.600}
M:=\sup_{x,y \in B_{r}(x_{0})} \left[ u(x)-u(y)-L_{1}h(|x-y|)-L_{2} (|x-x_{0}|^{2}+|y-x_{0}|^{2})\right ]  \leq 0,
\end{equation}
where
\begin{equation*}
\label{eq4.7}
h(t)=\left\{
     \begin{aligned}
     &   t-\frac{1}{2}t^{2} , 0\leq t \leq 1,        \\
     &   h(1) , \ \    t > 1          ,                  \\
     \end{aligned}
     \right.
\end{equation*}
and $ A_{0} >0 $ is to be determined.

For the case $ |\xi| > A_{0} $, by contradiction, we assume that for any $ L_{1}, L_{2} >0  $, the positive maximum $ M  $ is attained at $ (\overline{x},\overline{y}) \in B_{r}(x_{0})    $, that is to say,
\begin{equation}
\label{eq4.8}
L_{1}h(|\overline{x}-\overline{y}|)+L_{2} (|\overline{x}-x_{0}|^{2}+|\overline{y}-x_{0}|^{2}) \leq 2||u||_{L^{\infty}(B_{1})} .
\end{equation}
For simplicity, we denote
$$ \varphi (x,y):= L_{1}h(|x-y|)+L_{2} (|x-x_{0}|^{2}+|y-x_{0}|^{2}) $$
and
$$ \phi(x,y):=  u(x)-u(y)-\varphi (x,y)  .    $$
Notice that $ \overline{x} \neq \overline{y}    $. Indeed, if $ \overline{x} = \overline{y} $, then it is obvious that $ M \leq 0 $, which contradicts with the assumption that the positive maximum $ M $ is attained at $(\bar x, \bar y)$.

Now choosing $ L_{2} =\frac{32}{r^{2}}||u||_{L^{\infty}(B_{1})}  $, this choice of $L_2$ together with (\ref{eq4.8}) yield that
\begin{equation*}
|\overline{x}-x_{0}|+|\overline{y}-y_{0}| <  \frac{1}{2}r,
\end{equation*}
which implies that $ \overline{x},\overline{y}\in B_{r}(x_{0}) $.

By Ishii-Lions Lemma \cite[Theorem 3.2]{CIL92}, we can ensure the existence of limiting subjet $ (q_{\overline{x}},X)$ of $ u $ at $ \overline{x} $ and limiting superjet $ (q_{\overline{y}},Y)$ of $ u $ at $ \overline{y} $ and the following matrix inequality
\begin{equation}\label{loc:eq5}
\begin{pmatrix}
X  &   0   \\
0  &    -Y
\end{pmatrix}
\leq
\begin{pmatrix}
Z   &   -Z   \\
-Z  &    Z
\end{pmatrix}
 + (2L_{2}+\delta)\textbf{I}\rm{d}_{n}, \ \  0 < \delta \ll 1 ,
\end{equation}
where
\begin{equation*}
\label{eq4.10}
Z=L_{1}h''(|\overline{x}-\overline{y}|)\frac{\overline{x}-\overline{y}}{|\overline{x}-\overline{y}|}\otimes \frac{\overline{x}-\overline{y}}{|\overline{x}-\overline{y}|} +\frac{L_{1}h'(|\overline{x}-\overline{y}|)}{|\overline{x}-\overline{y}|}\left( \textbf{I}\rm{d}_{n}- \frac{\overline{x}-\overline{y}}{|\overline{x}-\overline{y}|}\otimes \frac{\overline{x}-\overline{y}}{|\overline{x}-\overline{y}|} \right)
\end{equation*}
and
\begin{equation}\label{loc:eq6}
\left\{
     \begin{aligned}
     & q_{\overline{x}}:=\partial_{\overline{x}}\varphi(\overline{x},\overline{y}) =L_{1}h'(|\overline{x}-\overline{y}|)\frac{\overline{x}-\overline{y}}{|\overline{x}-\overline{y}|}+2L_{2}(\overline{x}-x_{0}) ,      \\
     & q_{\overline{y}}:=-\partial_{\overline{y}}\varphi(\overline{x},\overline{y}) =L_{1}h'(|\overline{x}-\overline{y}|)\frac{\overline{x}-\overline{y}}{|\overline{x}-\overline{y}|}-2L_{2}(\overline{y}-x_{0}) .           \\
     \end{aligned}
     \right.
\end{equation}
Before two viscosity inequalities are given, for simplicity, we denote
\begin{equation}\label{loc:eq7}
\left\{
     \begin{aligned}
     &  F_{q_{\overline{x}}}(X):=-\text{Tr}\left( (\textbf{I}\rm{d}_{n}+(p-2)\frac{q_{\overline{x}}+\xi}{|q_{\overline{x}}+\xi|}\otimes \frac{q_{\overline{x}}+\xi}{|q_{\overline{x}}+\xi|})X\right) := -\text{Tr}\left(A(\eta_{1})X\right) ,        \\
     &   F_{q_{\overline{y}}}(Y):=-\text{Tr}\left( (\textbf{I}\rm{d}_{n}+(p-2)\frac{q_{\overline{y}}+\xi}{|q_{\overline{y}}+\xi|}\otimes \frac{q_{\overline{y}}+\xi}{|q_{\overline{y}}+\xi|})Y\right) := -\text{Tr}\left(A(\eta_{2})Y\right) ,          \\
     & Q_{i}(q_{\overline{x}},\xi) :=|q_{\overline{x}}+\xi|^{a_{i}}, \ \ i=1,2 ,  \\
     & Q_{i}(q_{\overline{y}},\xi) :=|q_{\overline{y}}+\xi|^{a_{i}}, \ \ i=1,2  ,    \\
     \end{aligned}
     \right.
\end{equation}
where
\begin{equation}\label{loc:eq8}
\left\{
     \begin{aligned}
     & A(\eta_{1})= \textbf{I}\rm{d}_{n}+(p-2)\frac{\eta_{1}}{|\eta_{1}|}\otimes \frac{\eta_{1}}{|\eta_{1}|}  , \ \ \eta_{1}= q_{\overline{x}}+\xi ,     \\
     & A(\eta_{2})= \textbf{I}\rm{d}_{n}+(p-2)\frac{\eta_{2}}{|\eta_{2}|}\otimes \frac{\eta_{2}}{|\eta_{2}|}  , \ \ \eta_{2}= q_{\overline{y}}+\xi .          \\
     \end{aligned}
     \right.
\end{equation}
Then from \eqref{loc:eq7} and \eqref{loc:eq8}, we obtain the following two viscosity inequalities:
\begin{equation*}
\label{eq4.14}
\min \bigg \{ F_{q_{\overline{x}}}(X), Q_{1}(q_{\overline{x}},\xi) F_{q_{\overline{x}}}(X), Q_{2}(q_{\overline{x}},\xi) F_{q_{\overline{x}}}(X)     \bigg \} \leq ||f||_{L^{\infty}(B_{1})} ,
\end{equation*}
\begin{equation*}
\label{eq4.15}
\max \bigg \{ F_{q_{\overline{y}}}(Y), Q_{1}(q_{\overline{y}},\xi) F_{q_{\overline{y}}}(Y), Q_{2}(q_{\overline{y}},\xi) F_{q_{\overline{y}}}(Y)     \bigg \} \geq -||f||_{L^{\infty}(B_{1})}.
\end{equation*}

Next, we consider all the possible cases.

Suppose
\begin{equation*}
\min \bigg \{ F_{q_{\overline{x}}}(X), Q_{1}(q_{\overline{x}},\xi) F_{q_{\overline{x}}}(X), Q_{2}(q_{\overline{x}},\xi) F_{q_{\overline{x}}}(X)     \bigg \} =F_{q_{\overline{x}}}(X)\leq ||f||_{L^{\infty}(B_{1})} ,
\end{equation*}
\begin{equation*}
\max \bigg \{ F_{q_{\overline{y}}}(Y), Q_{1}(q_{\overline{y}},\xi) F_{q_{\overline{y}}}(Y), Q_{2}(q_{\overline{y}},\xi) F_{q_{\overline{y}}}(Y)     \bigg \} =F_{q_{\overline{y}}}(Y)\geq -||f||_{L^{\infty}(B_{1})}  .
\end{equation*}
In this case, we have
\begin{equation}\label{loc:eq9}
F_{q_{\overline{y}}}(Y)-F_{q_{\overline{x}}}(X)  \geq -2||f||_{L^{\infty}(B_{1})}.
\end{equation}
Similarly, we can get the rest of the situations separately:
\begin{equation}\label{loc:eq10}
\left\{
     \begin{aligned}
     & F_{q_{\overline{y}}}(Y)-F_{q_{\overline{x}}}(X)  \geq - \bigg(1+Q_{i}^{-1}(q_{\overline{x}},\xi)\bigg)||f||_{L^{\infty}(B_{1})} , \ \  i=1,2   ,     \\
     &  F_{q_{\overline{y}}}(Y)-F_{q_{\overline{x}}}(X)  \geq - \bigg(1+Q_{i}^{-1}(q_{\overline{y}},\xi)\bigg)||f||_{L^{\infty}(B_{1})} ,\ \  i=1,2   ,     \\
     &   F_{q_{\overline{y}}}(Y)-F_{q_{\overline{x}}}(X)  \geq - \bigg(Q_{j}^{-1}(q_{\overline{x}},\xi)+Q_{i}^{-1}(q_{\overline{y}},\xi)\bigg)||f||_{L^{\infty}(B_{1})} ,\ \  i,j=1,2   .      \\
     \end{aligned}
     \right.
\end{equation}
From \eqref{loc:eq6}, we see that
\begin{equation*}
\label{eqp}
|q_{\overline{x}}|,|q_{\overline{y}}|\leq L_{1}  +2L_{2}.
\end{equation*}
By choosing $ A_{0} =3L_{1} +2L_{2}   $, it follows that
\begin{equation}\label{loc:eq11}
|q_{\overline{y}}+\xi|, |q_{\overline{x}}+\xi| \geq |\xi|-|q_{\overline{x}}|\geq 2L_{1} .
\end{equation}
Combining \eqref{loc:eq9}, \eqref{loc:eq10} and \eqref{loc:eq11}, we obtain
\begin{equation}\label{loc:eq12}
 F_{q_{\overline{y}}}(Y)-F_{q_{\overline{x}}}(X) \geq -C(2L_{1})^{-a_{2}}||f||_{L^{\infty}(B_{1})}.
\end{equation}
This gives a lower bound of $  F_{q_{\overline{y}}}(Y)-F_{q_{\overline{x}}}(X)   $.

Next, we shall derive an upper bound of $  F_{q_{\overline{y}}}(Y)-F_{q_{\overline{x}}}(X)$ using \eqref{loc:eq7} and \eqref{loc:eq8}. Indeed, we have
\begin{align}\label{loc:eq13}
\begin{split}
&  F_{q_{\overline{y}}}(Y)-F_{q_{\overline{x}}}(X)=\text{Tr}(A(\eta_{1})X)-\text{Tr}(A(\eta_{2})Y)      \\
 & \ \ \ \ \ \ \ \ \  \ \ \ \ \ \ \ \ \ \ \ \ \ = \underbrace{\text{Tr}\bigg(A(\eta_{1})(X-Y)\bigg)}_{:=\widetilde{A}}+\underbrace{\text{Tr}\bigg((A(\eta_{1})-A(\eta_{2}))Y\bigg)}_{:=B} .
\end{split}
\end{align}
Applying the matrix inequality \eqref{loc:eq5} to the vector $ (\overline{\xi},-\overline{\xi}) \in \mathbb {R}^{2n}   $ where $ \overline{\xi} =\frac{\overline{x}-\overline{y}}{|\overline{x}-\overline{y}|}$, we have
\begin{equation*} \label{eq4.21}
 \big\langle (X-Y)\overline{\xi},-\overline{\xi} \big\rangle \leq 4\overline{\xi}^{T} Z \overline{\xi} +4L_{2}+2\delta= 4L_{2}+2\delta-4L_{1} \leq0  ,
\end{equation*}
provided $ L_{1} $ is large enough, depending on $ L_{2} $ and $ \delta $, which implies that
\begin{equation}\label{loc:eq14}
{\color{blue}{\widetilde{A}}}= \sum_{i=1}^{n} \lambda_{i}(A(\eta_{1}))\lambda_{i}(X-Y) \leq \min \big\{1,p-1\big\}\big(4L_{2}+2\delta-4L_{1}\big).
\end{equation}
Note that
\begin{equation*}
\label{eq4.23}
A(\eta_{1})-A(\eta_{2})=(p-2) \big\{\widetilde{\eta_{1}}\otimes (\widetilde{\eta_{1}}-\widetilde{\eta_{2}})-\widetilde{\eta_{2}}\otimes(\widetilde{\eta_{2}}-\widetilde{\eta_{1}})     \big\} ,
\end{equation*}
where $ \widetilde{\eta_{i}} = \frac{\eta_{i}}{|\eta_{i}|}, \ i=1,2$.
Then we have
\begin{align}\label{loc:eq15}
\begin{split}
B & \leq n|p-2||\widetilde{\eta_{1}}-\widetilde{\eta_{2}}|\big(|\widetilde{\eta_{1}}|+|\widetilde{\eta_{2}}|\big)||Y||    \\
&   \leq 2n|p-2|||Y|||\widetilde{\eta_{1}}-\widetilde{\eta_{2}}|,
\end{split}
\end{align}
where the definitions of $ \widetilde{\eta_{i}} \ (i=1,2)$ are used.

Now we turn to the estimate of $ ||Y|| $ and $ |\widetilde{\eta_{1}}-\widetilde{\eta_{2}}| $ in \eqref{loc:eq15}. Applying the second matrix inequality in \eqref{loc:eq5}  to the vector $ ( \xi, \xi) \in \mathbb {R}^{2n} $ and $ |\xi| =1  $, then
\begin{equation*}
 \langle (X-Y)\xi, \xi \rangle \leq (4L_{2}+2\delta)|\xi|^{2}= 4L_{2}+2\delta ,
\end{equation*}
which implies that
\begin{equation}\label{loc:eq16}
||X-Y|| \leq 4L_{2} +2\delta.
\end{equation}
In a similar way, we have
\begin{equation}\label{locc:eq17}
 ||X|| = \sup_{|\xi|\leq 1}|\xi^{T} X\xi| \leq ||Z|| + 2L_{2} +\delta \leq L_{1} + 2L_{2} + \delta,
\end{equation}
where we have used the fact $ ||Z|| \leq L_{1} $. Combining the previous estimate \eqref{loc:eq16} with \eqref{locc:eq17}, it follows
\begin{equation}\label{locc:eq18}
  ||Y|| \leq L_{1} + 6L_{2} +3\delta.
\end{equation}

By using \eqref{loc:eq8}, \eqref{loc:eq11} and $ |\eta_{1} - \eta_{2}| \leq 4L_{2} $(since $ |\overline{x}+\overline{y}-2x_{0}| \leq 2$), we obtain
\begin{align*}
\label{eq4.26}
\begin{split}
|\widetilde{\eta_{1}}-\widetilde{\eta_{2}}|=\bigg|\frac{\eta_{1}}{|\eta_{1}|}-\frac{\eta_{2}}{|\eta_{2}|}\bigg| \leq \max \bigg\{\frac{\eta_{1}-\eta_{2}}{|\eta_{1}|}, \frac{\eta_{1}-\eta_{2}}{|\eta_{2}|}   \bigg\}
 \leq \frac{2L_{2}}{L_{1}},
\end{split}
\end{align*}
which, together with \eqref{loc:eq15} and \eqref{locc:eq18}, yields that
\begin{equation}\label{loc:eq17}
B \leq 4n|p-2|\left( L_{2} + \frac{L_{2}}{L_{1}} (6L_{2}+3\delta)\right).
\end{equation}
Combining \eqref{loc:eq12}, \eqref{loc:eq13}, \eqref{loc:eq14} and \eqref{loc:eq17}, we get
\begin{align*}
4L_{1}\min\big\{1,p-1\big\} \leq \min\big\{1,p-1\big\} \big(4L_{2}+2\delta\big)  + 4n & |p-2|\left( L_{2} + \frac{L_{2}}{L_{1}} (6L_{2}+3\delta)\right) \\
&+ C(2L_{1})^{-a_{2}}||f||_{L^{\infty}(B_{1})}.
\end{align*}
By choosing $ L_{1} \gg 1 $ large enough, depending on $ n, p, \delta, a_{2}, L_{2}, ||f||_{L^{\infty}(B_{1})} $, we reach a contradiction.

Thereby, we verify \eqref{eq4.600}, which implies that $ u $ is Lipschitz continuous and satisfies
\begin{equation*}
|u(x)-u(y)|\leq C|x-y|,
\end{equation*}
where $ C= C(n, p, a_{1}, a_{2}, ||u||_{L^{\infty}(B_{1})},||f||_{L^{\infty}(B_{1})}) $ is a positive constant.

The proof is now complete.
\end{proof}

The next lemma deals with the other alternative of the norm of $ \xi $, i.e. the case where $ |\xi| \leq A_{0} $.

\begin{Lemma}\label{Sec4:lem4.2}
Let $ u \in C(B_{1}) $ be a viscosity sub-solution to \eqref{loc:eq3} and a viscosity super-solution to \eqref{loc:eq4}. If $ |\xi| \leq A_{0} $ being the same as that in Lemma \ref{lem4.1}, then $ u \in C^{0,\beta}_{loc}(B_{1})$ for some $ 0<\beta<1 $. Moreover, for every $ r \in (0,1) $, there exists a constant $ C= C(n, p, a_{1}, a_{2},||u||_{L^{\infty}(B_{1})},||f||_{L^{\infty}(B_{1})})>0 $ such that for all $ x,y \in B_{r} $,
\begin{equation*}
|u(x)-u(y)|\leq C|x-y|^{\beta}.
\end{equation*}
\end{Lemma}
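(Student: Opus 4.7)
The plan is to repeat the Ishii--Lions doubling-variable argument used in Lemma~\ref{lem4.1}, but replace the linear modulus $h(t)=t-t^2/2$ (which forced $|q_{\bar x}|,|q_{\bar y}|\le L_1+2L_2$, hence the need for $|\xi|>A_0$ to keep $|q_{\bar x}+\xi|$ large) by a H\"older modulus $h(t)=t^{\beta}$ with $\beta\in(0,1)$ to be chosen small. Concretely, I would fix $r\in(0,1)$ and $x_0\in B_{1/2}$ and argue by contradiction that the sup of
\[
\Phi(x,y):=u(x)-u(y)-L_1|x-y|^{\beta}-L_2\bigl(|x-x_0|^{2}+|y-x_0|^{2}\bigr)
\]
on $\overline{B_r(x_0)}\times\overline{B_r(x_0)}$ is non-positive, choosing $L_2=32r^{-2}\|u\|_{L^\infty(B_1)}$ to push the maximizer $(\bar x,\bar y)$ into the interior and $L_1$ large as a function of $\|u\|_{L^\infty(B_1)}$ and $\|f\|_{L^\infty(B_1)}$. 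Ishii--Lions then produces jets $(q_{\bar x},X)$ and $(q_{\bar y},Y)$ with
\[
q_{\bar x}=\beta L_1|\bar x-\bar y|^{\beta-1}\widehat e+2L_2(\bar x-x_0),\qquad \widehat e:=\tfrac{\bar x-\bar y}{|\bar x-\bar y|},
\]
and the matrix inequality \eqref{loc:eq5} with $Z=L_1\beta(\beta-1)|\bar x-\bar y|^{\beta-2}\widehat e\otimes\widehat e+\tfrac{L_1\beta|\bar x-\bar y|^{\beta-1}}{|\bar x-\bar y|}(\mathbf I-\widehat e\otimes\widehat e)$.

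The key new observation — which makes the bound $|\xi|\le A_0$ harmless — is that $\Phi(\bar x,\bar y)>0$ forces $L_1|\bar x-\bar y|^{\beta}\le 2\|u\|_{L^\infty(B_1)}$, hence $|\bar x-\bar y|\le(2\|u\|_\infty/L_1)^{1/\beta}$, so the dominant term of $q_{\bar x}$ has size $\beta L_1|\bar x-\bar y|^{\beta-1}\gtrsim L_1^{1/\beta}$, which is arbitrarily large for $L_1$ large and $\beta<1$. Together with $|\xi|\le A_0$, this yields $|q_{\bar x}+\xi|\ge\tfrac12|q_{\bar x}|\gg 1$, and the same for $\bar y$. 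Therefore the degenerate coefficients $Q_i(q_{\bar x},\xi)=|q_{\bar x}+\xi|^{a_i}(1+a(\bar x))$ in \eqref{loc:eq7} are in fact \emph{large}, and their reciprocals $Q_i^{-1}$ are as small as we want.

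Exploiting this, the nine-case analysis of Lemma~\ref{lem4.1} transplants without change and yields the universal lower bound
\[
F_{q_{\bar y}}(Y)-F_{q_{\bar x}}(X)\ \ge\ -C\bigl(|q_{\bar x}+\xi|^{-a_2}+|q_{\bar y}+\xi|^{-a_2}\bigr)\|f\|_{L^\infty(B_1)},
\]
which is negligible for $L_1$ large. For the upper bound I would split
\[
F_{q_{\bar y}}(Y)-F_{q_{\bar x}}(X)=\operatorname{tr}\bigl(A(\eta_1)(X-Y)\bigr)+\operatorname{tr}\bigl((A(\eta_1)-A(\eta_2))Y\bigr),
\]
test \eqref{loc:eq5} against $(\widehat e,-\widehat e)$ to obtain $\langle(X-Y)\widehat e,\widehat e\rangle\le 4L_1\beta(\beta-1)|\bar x-\bar y|^{\beta-2}+4L_2+2\delta$, a strongly negative quantity because $\beta<1$, and conclude by uniform ellipticity of the normalized $p$-Laplacian (Remark \ref{rk6}) that $\operatorname{tr}(A(\eta_1)(X-Y))\le\min\{1,p-1\}\bigl(4L_1\beta(\beta-1)|\bar x-\bar y|^{\beta-2}+4L_2+2\delta\bigr)$. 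For the second trace I would use $\|Y\|\lesssim L_1|\bar x-\bar y|^{\beta-2}+L_2$ (from \eqref{loc:eq5} applied to $(e,e)$) together with $|\widetilde{\eta_1}-\widetilde{\eta_2}|\le 2|q_{\bar x}-q_{\bar y}|/\min(|\eta_1|,|\eta_2|)\lesssim L_2/(L_1|\bar x-\bar y|^{\beta-1})$, so that the perturbation term is of strictly lower order than the leading one.

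The main obstacle will be the simultaneous balancing of scales: the $L_1(1-\beta)|\bar x-\bar y|^{\beta-2}$ driving term must simultaneously (i) absorb the lower-bound error $|q_{\bar x}+\xi|^{-a_2}\|f\|_\infty$, (ii) dominate the cross term $\operatorname{tr}((A(\eta_1)-A(\eta_2))Y)$, and (iii) leave room for the $L_2$ contributions. The resolution is a specific order of quantifiers: first fix $\beta\in(0,1)$ small depending only on $n,p,a_1,a_2$ (to make $1-\beta$ of order $1$ and to keep $\beta a_2<1$ so that $|\bar x-\bar y|^{\beta-2}$ beats $|q_{\bar x}+\xi|^{-a_2}$), then fix $L_2$ as above, and finally pick $L_1=C_0(\|u\|_{L^\infty(B_1)}+\|f\|_{L^\infty(B_1)})$ with $C_0=C_0(n,p,a_1,a_2)$ large; the three competing terms then combine into a strict inequality that contradicts the attained maximum, forcing $\Phi\le0$ on $\overline{B_r(x_0)}^{2}$ and giving the desired H\"older estimate with an exponent $\beta=\beta(n,p,a_1,a_2)\in(0,1)$ independent of $\xi$.
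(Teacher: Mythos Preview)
Your proposal is correct and follows essentially the same Ishii--Lions argument as the paper: replace $h(t)=t-t^2/2$ by $h(t)=t^\beta$, so that $|q_{\bar x}|\sim L_1\beta|\bar x-\bar y|^{\beta-1}$ is large and the degenerate factors $Q_i^{-1}$ become negligible regardless of $|\xi|\le A_0$. The only variation is in how the largeness of $|q_{\bar x}+\xi|$ is secured---the paper uses $|\bar x-\bar y|\le r$ with $r$ taken small (so that $\beta r^{\beta-1}>3$), whereas you use the bound $|\bar x-\bar y|\le(2\|u\|_\infty/L_1)^{1/\beta}$ coming from $M>0$---and your balancing of the cross term $\operatorname{tr}\bigl((A(\eta_1)-A(\eta_2))Y\bigr)$ is more explicit than the paper's, which simply reuses the estimates from Lemma~\ref{lem4.1}.
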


\begin{proof}
If $ |Du| \geq 2 A_{0} $, then $ |Du+ \xi|\geq \big||Du|-|\xi|\big| \geq A_{0} $. Noticing that $ u $ is a viscosity sub-solution to
\begin{equation*}
 \mathcal{M}^{+}_{\lambda,\Lambda}(D^{2}u) + C_{0}^{-1} ||f||_{L^{\infty}(B_{1})}  = 0,
\end{equation*}
and $ u $ is a viscosity super-solution to
\begin{equation*}
   \mathcal{M}^{-}_{\lambda,\Lambda}(D^{2}u) - C_{0}^{-1}||f||_{L^{\infty}(B_{1})} = 0,
\end{equation*}
where $ C_{0} $ is a positive constant depending only on $ A_{0}, a_{1}, a_{2} $. Indeed, take $ \varphi \in C^{2}(B_{1}) $ such that $ u - \varphi $ has a local maximum at $ x_{0} \in \{|Du| \geq 2A_{0}\} $, then $ |D\varphi(x_{0})+\xi|^{a_{i}} \geq A_{0}^{a_{i}}, i=1,2 $. We assume that $  -\Delta_{p,\xi}^{\rm{N}}\varphi(x_{0}) \geq 0 $, since the case $  -\Delta_{p,\xi}^{\rm{N}}\varphi(x_{0}) \leq 0 $ is trivial. From \eqref{loc:eq3} we have
\begin{equation*}
  \Delta_{p,\xi}^{\rm{N}}\varphi(x_{0}) \geq -C_{0}^{-1} ||f||_{L^{\infty}(B_{1})},
\end{equation*}
where $ C_{0} = \min \{1, A_{0}^{a_{1}}, A_{0}^{a_{2}}\} $. As a consequence, in the set $ |Du| \geq 2A_{0} $, $ u $ is a viscosity sub-solution to
 $ \mathcal{M}^{+}_{\lambda,\Lambda}(D^{2}u) + C_{0}^{-1} ||f||_{L^{\infty}(B_{1})}  = 0 $.
In a similar way, we can prove that $ u $ is a viscosity super-solution to $ \mathcal{M}^{-}_{\lambda,\Lambda}(D^{2}u) - C_{0}^{-1}||f||_{L^{\infty}(B_{1})} = 0 $.

Using the result of \cite[Theorem 1.1]{LS16} (see also \cite[Proposition 2]{HPRS21}), there exists $ \beta \in (0,1) $ such that $ u \in C^{0,\beta}_{loc}(B_{1}) $ and
\begin{equation*}
|u(x)-u(y)|\leq C|x-y|^{\beta},
\end{equation*}
where $ C =  C(n, p, a_{1}, a_{2},||u||_{L^{\infty}(B_{1})},||f||_{L^{\infty}(B_{1})}) $ is a positive constant.
\end{proof}

By combining Lemma \ref{lem4.1} and Lemma \ref{Sec4:lem4.2}, we obtain a compactness result for viscosity solutions to the perturbed equations \eqref{loc:eq3} and \eqref{loc:eq4}.

\begin{Proposition}\label{Sec4:prop1}
Let $ u \in C(B_{1}) $ be a viscosity sub-solution to \eqref{loc:eq3} and a viscosity super-solution to \eqref{loc:eq4}. Then $ u \in C^{0,\beta}_{loc}(B_{1})$ for some $ 0<\beta<1 $. Moreover, for every $ r \in (0,1) $, there exists a constant $ C= C(n, p, a_{1}, a_{2},||u||_{L^{\infty}(B_{1})},||f||_{L^{\infty}(B_{1})})>0 $ such that for all $ x,y \in B_{r} $,
\begin{equation*}
|u(x)-u(y)|\leq C |x-y|^{\beta}.
\end{equation*}
\end{Proposition}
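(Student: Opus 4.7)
The plan is to observe that Proposition \ref{Sec4:prop1} follows immediately from the dichotomy already developed in Lemma \ref{lem4.1} and Lemma \ref{Sec4:lem4.2}, which together exhaust all possibilities for the fixed vector $\xi \in \mathbb{R}^n$. Specifically, I would fix the threshold $A_0 = A_0(n,p,a_1,a_2)$ provided by Lemma \ref{lem4.1} and split on whether $|\xi| > A_0$ or $|\xi| \le A_0$.

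In the first case, Lemma \ref{lem4.1} yields local Lipschitz regularity of $u$ with the estimate
\begin{equation*}
|u(x)-u(y)| \le C\bigl(\|u\|_{L^{\infty}(B_{1})}+\|f\|_{L^{\infty}(B_{1})}\bigr)|x-y|, \qquad x,y \in B_r,
\end{equation*}
for a constant $C=C(n,p,a_1,a_2)$. Since $|x-y| \le 2$ for $x,y \in B_r \subset B_1$, for any fixed $\beta \in (0,1)$ we can enlarge $|x-y|$ to $|x-y|^{\beta}$ at the cost of a factor $2^{1-\beta}$, producing the desired H\"older bound in this regime.

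In the second case, Lemma \ref{Sec4:lem4.2} already supplies the H\"older estimate
\begin{equation*}
|u(x)-u(y)| \le C\bigl(\|u\|_{L^{\infty}(B_{1})}+\|f\|_{L^{\infty}(B_{1})}\bigr)|x-y|^{\beta}
\end{equation*}
for some $\beta=\beta(n,p,a_1,a_2) \in (0,1)$. Choosing the same $\beta$ as above and taking the larger of the two constants $C$ produces a single estimate valid for every $\xi \in \mathbb{R}^n$, which is the conclusion of Proposition \ref{Sec4:prop1}.

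No genuine obstacle arises here: the hard analytic work has already been done inside the Ishii--Lions doubling arguments of the two preceding lemmas, and the only subtlety is to make sure the exponent $\beta$ and the constant $C$ are selected \emph{independently of $\xi$} so that the resulting estimate is uniform. This is ensured by the fact that $A_0$, $\beta$, and $C$ in both lemmas depend only on $n,p,a_1,a_2$, so the conclusion follows by a direct union of the two cases.
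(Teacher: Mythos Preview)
Your proposal is correct and matches the paper's approach exactly: the paper states Proposition~\ref{Sec4:prop1} immediately after Lemma~\ref{Sec4:lem4.2} with the single remark ``By combining Lemma~\ref{lem4.1} and Lemma~\ref{Sec4:lem4.2}, we obtain \ldots'', and your argument just spells out that union of cases. The only content is, as you note, that $A_0$, $\beta$, and $C$ depend only on $n,p,a_1,a_2$ and not on $\xi$, so the estimate is uniform.
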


\subsection{The improvement of flatness lemma}\label{Section 4.2}
For convenience, we perform a scaling argument for \eqref{loc:eq3} and \eqref{loc:eq4}.

Letting
$$ v(x)=\frac{u(x_{0}+\zeta x)}{K},   $$
where $ \zeta $ and $ K $ are positive constants to be determined, direct computations yield that $ v $ is a viscosity sub-solution to
\begin{align}\label{loc:eq18}
\begin{split}
&\min \big \{-\Delta_{p,\widetilde{\xi}}^{\rm{N}}v, -Q_{1}(Dv,\widetilde{\xi};x_{0})\Delta_{p,\widetilde{\xi}}^{\rm{N}}v, \\
& \ \ \ \ \ \ \ \ \ \ \ \ \ \ \ \ \ \ -Q_{2}(Dv,\widetilde{\xi};x_{0})\Delta_{p,\widetilde{\xi}}^{\rm{N}}v \big \} = ||\widehat{f}||_{L^{\infty}(B_{1})}
\end{split}
\end{align}
and $ v $ is a viscosity super-solution to
\begin{align}\label{loc:eq19}
\begin{split}
&\max \big \{-\Delta_{p,\widetilde{\xi}}^{\rm{N}}v, -Q_{1}(Dv,\widetilde{\xi};x_{0})\Delta_{p,\widetilde{\xi}}^{\rm{N}}v, \\
& \ \ \ \ \ \ \ \ \ \ \ \ \ \ \ \ \ \ -Q_{2}(Dv,\widetilde{\xi};x_{0})\Delta_{p,\widetilde{\xi}}^{\rm{N}}v \big \} =-||\widehat{f}||_{L^{\infty}(B_{1})},
\end{split}
\end{align}
where
\begin{equation*}
\left\{
     \begin{aligned}
     & Q_{i}(z,\widetilde{\xi};x_{0}) := |z+\widetilde{\xi}|^{a_{i}}, \ i=1,2, \ \widetilde{\xi}=\frac{\zeta}{K}\xi    ,     \\
     &  \Delta_{p,\widetilde{\xi}}^{\rm{N}}v:= \Delta v +(p-2) \bigg\langle D^{2}v\frac{Dv+\widetilde{\xi}}{|Dv+\widetilde{\xi}|}, \frac{Dv+\widetilde{\xi}}{|Dv+\widetilde{\xi}|}\bigg\rangle   ,     \\
     & ||\widehat{f}||_{L^{\infty}(B_{1})}:=\max \bigg \{\frac{\zeta^{2}}{K},\frac{\zeta^{2+a_{1}}}{K^{1+a_{1}}},\frac{\zeta^{2+a_{2}}}{K^{1+a_{2}}}\bigg\}        ||f||_{L^{\infty}(B_{1})}  .    \\
     \end{aligned}
     \right.
\end{equation*}

By choosing $ \zeta = \delta^{\frac{1}{2}} (0<\delta <1)$ and
$$ K=2\bigg(||u||_{L^{\infty}(B_{1})}+\max \{||f||_{L^{\infty}(B_{1})}, ||f||_{L^{\infty}(B_{1})}^{\frac{1}{1+a_{1}}}, ||f||_{L^{\infty}(B_{1})}^{\frac{1}{1+a_{2}}}   \} \bigg),   $$
it is not difficult to verify that
\begin{equation*}
\label{eq4.22}
 ||v||_{L^{\infty}(B_{1})} \leq \frac{1}{2}, \ \
{\rm and} \ \ ||\widehat{f}||_{L^{\infty}(B_{1})}   \leq \delta   .
\end{equation*}
Therefore, in what follows, we will always assume that the conditions
\begin{equation}\label{loc:eq20}
 ||u||_{L^{\infty}(B_{1})} \leq \frac{1}{2}, \ \  {\rm and} \ \ ||f||_{L^{\infty}(B_{1})}   \leq \delta
\end{equation}
hold.

The H\"{o}lder estimate in Proposition \ref{Sec4:prop1} on viscosity solution provides the compactness result with respect to uniform convergence, which plays an important role in the the improvement of flatness lemma.

Before stating the improvement of flatness lemma,  we first recall the H\"{o}lder estimate for gradient to homogeneous normalized $ p$-Laplacian equation. We refer the reader to \cite[Lemma 3.2]{AME17} and \cite[Lemma 3.4]{AE18}.

\begin{Lemma}
\label{lem4.21}
Let $ v $ be a viscosity solution of
$$ -\Delta v -(p-2)\bigg\langle D^{2}v\frac{Dv+\xi}{|Dv+\xi|}, \frac{Dv+\xi}{|Dv+\xi|}\bigg\rangle =0 \ \text{in} \ B_{1}, \ \ \xi \in  \mathbb{R}^{n} , $$
with $ ||v||_{L^{\infty}(B_{1})} \leq \frac{1}{2} $. For all $ 0\leq r  \leq \frac{1}{2} $, there exists constants $ C_{0}=C_{0}(p,n) >0 $ and $ \beta_{1} =\beta_{1}(p,n) >0 $ such that
$$ ||v||_{C^{1,\beta_{1}}(B_{r})} \leq C_{0} . $$
\end{Lemma}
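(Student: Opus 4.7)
My approach would center on the substitution $w(x) := v(x) + \xi \cdot x$, under which $Dw = Dv + \xi$ and $D^2 w = D^2 v$. Plugging this into the equation, one sees at once that $w$ is a viscosity solution of the standard normalized $p$-Laplacian $-\Delta_p^N w = 0$ in $B_1$. The Attouchi--Parviainen--Ruosteenoja $C^{1,\widehat{\alpha}_1}$ theorem for normalized $p$-harmonic functions (\emph{cf.}~Lemma \ref{lem2.3} in the homogeneous case) then delivers $w \in C^{1,\beta_1}_{loc}(B_1)$ for a universal $\beta_1 = \beta_1(n,p) > 0$. Moreover, the Campanato-type seminorm $\|\cdot\|_{C^{1,\beta_1}(B)}$ used in this paper is clearly invariant under adding linear functions, since the infimum over $p\in\mathbb{R}^n$ and $c\in\mathbb{R}$ absorbs any linear shift; therefore $\|v\|_{C^{1,\beta_1}(B_r)} = \|w\|_{C^{1,\beta_1}(B_r)}$, and it suffices to bound the right-hand side by a constant depending only on $n$ and $p$.

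The delicate point is that the APR estimate reads $\|w\|_{C^{1,\beta_1}(B_{1/2})} \leq C(n,p)\,\|w\|_{L^\infty(B_1)}$, while $\|w\|_{L^\infty(B_1)} \leq \tfrac{1}{2} + |\xi|$ fails to be uniform in $\xi$. I would resolve this by a dichotomy on $|\xi|$. When $|\xi| \leq M_0$ for a universal $M_0(n,p)$, one has $\|w\|_{L^\infty(B_1)} \leq \tfrac{1}{2} + M_0$ and APR immediately gives a uniform bound. When $|\xi| > M_0$, I would first produce a Lipschitz estimate $|Dv| \leq L(n,p)$ in $B_{3/4}$ uniform in $\xi$ by an Ishii--Lions duplication, mirroring the proof of Lemma \ref{lem4.1} but simpler, since the equation is homogeneous and the $f$-terms drop out. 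Then $|Dv + \xi| \geq |\xi| - L > 0$ throughout $B_{3/4}$, so the equation is uniformly \emph{non-degenerate}, and the coefficient matrix $A(Dv + \xi) = I + (p-2)\,\widehat{Dv + \xi}\otimes\widehat{Dv + \xi}$ differs from the constant matrix $A(\xi/|\xi|)$ by $O(L/|\xi|) = O(1/M_0)$. Classical Schauder theory (or a Caffarelli-type $C^{1,\alpha}$ estimate for uniformly elliptic fully nonlinear operators close to a linear one) then yields the claim with a constant independent of $\xi$.

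I expect the hardest step to be matching the two regimes continuously, i.e.\ preventing the estimate from deteriorating as $|\xi|$ crosses $M_0$. A cleaner alternative that sidesteps this matching issue is a compactness--contradiction argument. If the lemma fails, there exist sequences $v_k$ with $\|v_k\|_{L^\infty(B_1)} \leq \tfrac{1}{2}$ and $\xi_k \in \mathbb{R}^n$ such that $\|v_k\|_{C^{1,\beta_1}(B_{1/2})} \to \infty$. The H\"older estimate of Proposition \ref{Sec4:prop1} provides compactness in $C(B_{3/4})$, yielding a uniform limit $v_\infty$. Along a further subsequence, either $\xi_k \to \xi_\infty \in \mathbb{R}^n$, in which case $v_\infty + \xi_\infty \cdot x$ solves $-\Delta_p^N(\cdot) = 0$ and is thus $C^{1,\beta_1}_{loc}$, or $|\xi_k| \to \infty$ with $\xi_k/|\xi_k| \to e \in S^{n-1}$, in which case passage to the limit in the viscosity sense shows $-\mathrm{tr}\bigl(A(e)\,D^2 v_\infty\bigr) = 0$, so $v_\infty$ is in fact $C^\infty_{loc}$. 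In either scenario the $C^{1,\beta_1}$ seminorm of the limit is finite, contradicting the divergence of the approximating seminorms and completing the proof.
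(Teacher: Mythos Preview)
The paper does not supply its own proof of this lemma; it simply records the statement and refers the reader to \cite[Lemma~3.2]{AME17} and \cite[Lemma~3.4]{AE18}. Your proposal therefore goes further than the paper does here.

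Your substitution $w(x)=v(x)+\xi\cdot x$ is correct, as is the observation that the paper's Campanato-type seminorm is invariant under adding affine functions, so that $\|v\|_{C^{1,\beta_1}(B_r)}=\|w\|_{C^{1,\beta_1}(B_r)}$. The dichotomy on $|\xi|$ is a sound route: for $|\xi|\le M_0$ the APR estimate applies directly to $w$; for $|\xi|>M_0$, a $\xi$-independent Lipschitz bound on $v$ (obtained by Ishii--Lions exactly as in Lemma~\ref{lem4.1} with $f\equiv 0$) forces $|Dv+\xi|$ to be bounded away from zero, so the equation becomes uniformly elliptic with coefficients close to the constant matrix $A(\xi/|\xi|)$, and a Cordes--Nirenberg or Caffarelli-type perturbation estimate yields the $\xi$-free $C^{1,\alpha}$ bound. (Your mention of ``Schauder'' is slightly loose, since the coefficients depend on $Dv$ whose H\"older regularity is the goal; the Caffarelli/Cordes--Nirenberg alternative you list is the correct tool.) This is in the spirit of the argument in \cite{AE18}.

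Your alternative compactness argument, however, has a genuine gap. From $v_k\to v_\infty$ locally uniformly and $v_\infty\in C^{1,\beta_1}_{\mathrm{loc}}$ one cannot deduce $\sup_k\|v_k\|_{C^{1,\beta_1}(B_{1/2})}<\infty$: regularity of the limit does not propagate backwards to uniform regularity of the sequence. The Campanato seminorm is only lower semicontinuous under uniform convergence, which gives the inequality in the wrong direction. A contradiction argument of this flavour requires compactness in a topology at least as strong as $C^{1,\beta_1}$, or else an improvement-of-flatness step iterated across scales (as in Lemma~\ref{lem4.22}), rather than a direct blow-up of the global $C^{1,\beta_1}$ norm. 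Stick with the dichotomy.
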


Now, we state the improvement of flatness lemma as follows.

\begin{Lemma}
\label{lem4.22}
Suppose $ u $ is a viscosity sub-solution to \eqref{loc:eq3} and $ u $ is viscosity super-solution to \eqref{loc:eq4}. Then there exists $ 0<\rho<1$ and $ \delta >0 $, depending only on $ p,n$ and $a_{2}$, such that if $ ||u||_{L^{\infty}(B_{1})} \leq \frac{1}{2}$ and $  ||f||_{L^{\infty}(B_{1})} \leq \delta $, the inequality
$$  \mathop{osc}\limits_{B_{\rho}}\big(u-\vec{q}\cdot x\big) \leq \frac{1}{2}\rho$$
holds for some $ \vec{q} \in \mathbb{R}^{n} $.
\end{Lemma}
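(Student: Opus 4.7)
I will argue by contradiction via a compactness-approximation scheme, taking advantage of the Hölder compactness from Proposition \ref{Sec4:prop1} and the $C^{1,\beta_1}$ regularity of the limiting perturbed $p$-harmonic function from Lemma \ref{lem4.21}. Suppose the conclusion fails. Then for every $\rho \in (0,1)$ and every $k \in \mathbb{N}$, there exist $\xi_k \in \mathbb{R}^n$, $a_k \in C(B_1)$ with $a_k \geq 0$, a source $f_k$ with $\|f_k\|_{L^\infty(B_1)} \leq 1/k$, and $u_k \in C(B_1)$ with $\|u_k\|_{L^\infty(B_1)} \leq 1/2$, which is a viscosity sub-solution of \eqref{loc:eq3} and a viscosity super-solution of \eqref{loc:eq4} (with $\xi=\xi_k$, $a=a_k$, $f=f_k$), yet
\[
\operatorname*{osc}_{B_\rho}\bigl(u_k - q\cdot x\bigr) > \tfrac{1}{2}\rho \quad \text{for every } q \in \mathbb{R}^n.
\]
The goal is to extract a limit $u_\infty$ that is $C^{1,\beta_1}$ near the origin, use its Taylor expansion to produce an affine function that approximates $u_\infty$ to order $\rho^{1+\beta_1}$, and then, by uniform convergence, transfer this flatness back to $u_k$ for large $k$, contradicting the displayed inequality once $\rho$ is chosen so that $C_0\rho^{1+\beta_1} < \rho/8$.

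The first step is the compactness: Proposition \ref{Sec4:prop1} furnishes a uniform $C^{0,\beta}_{\mathrm{loc}}(B_1)$ bound for $\{u_k\}$ depending only on $n,p,a_1,a_2$, so the Arzelà–Ascoli theorem yields a subsequence (still indexed by $k$) with $u_k \to u_\infty$ locally uniformly in $B_1$, with $\|u_\infty\|_{L^\infty(B_1)} \leq 1/2$. The second step is to identify the limit equation, which requires splitting on the behaviour of $\xi_k$. \emph{Case (i): $\xi_k$ is bounded.} Up to a subsequence $\xi_k \to \xi_\infty$, and standard stability of viscosity solutions (using the continuity of the operator in $(\xi, Du, D^2u)$ away from $Du + \xi = 0$, together with the min/max formulation exactly as in \cite[Sec.~2]{De22}) gives that $u_\infty$ is simultaneously a viscosity sub-solution of the min-equation and a super-solution of the max-equation with $\xi = \xi_\infty$, right-hand side $0$. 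A shifted analogue of Lemma \ref{lem2.2} (the cutting lemma, whose proof adapts verbatim with $\Delta_{p,\xi_\infty}^N$ replacing $\Delta_p^N$) then collapses both inequalities to
\[
-\Delta u_\infty -(p-2)\bigl\langle D^2 u_\infty \tfrac{Du_\infty+\xi_\infty}{|Du_\infty+\xi_\infty|}, \tfrac{Du_\infty+\xi_\infty}{|Du_\infty+\xi_\infty|}\bigr\rangle = 0
\]
in the viscosity sense in $B_{3/4}$. \emph{Case (ii): $|\xi_k| \to \infty$.} Along a subsequence $\xi_k/|\xi_k| \to e \in \mathbb{S}^{n-1}$; for any smooth test function touching $u_\infty$, one has $(D\varphi + \xi_k)/|D\varphi + \xi_k| \to e$, while $|D\varphi + \xi_k|^{a_i} \to \infty$ forces the min/max to be controlled by the bare operator, yielding the constant-coefficient linear equation $\Delta u_\infty + (p-2)\langle D^2 u_\infty \, e, e\rangle = 0$ in the limit.

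The third step is to apply Lemma \ref{lem4.21} in Case (i), or classical linear theory in Case (ii), to obtain $u_\infty \in C^{1,\beta_1}(B_{1/2})$ with the universal estimate $\|u_\infty\|_{C^{1,\beta_1}(B_{1/2})} \leq C_0 = C_0(n,p)$. Taylor-expanding at the origin gives, for every $\rho \in (0,1/2)$,
\[
\sup_{x \in B_\rho} \bigl| u_\infty(x) - u_\infty(0) - Du_\infty(0)\cdot x \bigr| \leq C_0 \rho^{1+\beta_1}.
\]
Now fix $\rho \in (0,1/2)$ with $4 C_0 \rho^{\beta_1} \leq 1/4$, and set $q_\infty := Du_\infty(0)$. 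Then $\operatorname*{osc}_{B_\rho}(u_\infty - q_\infty \cdot x) \leq 2 C_0 \rho^{1+\beta_1} \leq \rho/8$. By the uniform convergence $u_k \to u_\infty$, for all $k$ sufficiently large we have $\operatorname*{osc}_{B_\rho}(u_k - q_\infty \cdot x) \leq \rho/4 < \rho/2$, contradicting the standing assumption with $q = q_\infty$. This then determines both $\rho$ and $\delta$ (the latter as the threshold $1/k$ reached in the contradiction).

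The step I expect to be the main obstacle is the passage to the limit in the viscosity inequalities, particularly reconciling the min/max structure with the shifted operator and the possibly blowing-up gradient shift $\xi_k$. In Case (i), care is needed to verify the cutting-lemma reduction for the $\xi$-shifted operator, since the perturbation breaks the natural homogeneity that underlies \cite[Lem.~2.6]{AE18}; this is the reason the authors state stability separately for shifted data. In Case (ii) one must check that the lower-order terms $|D\varphi + \xi_k|^{a_i}\Delta_{p,\xi_k}^N \varphi$ really do select the bare operator in the min/max (rather than enforcing additional constraints), which is a routine but essential verification.
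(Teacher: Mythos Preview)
Your overall strategy---contradiction, H\"older compactness via Proposition~\ref{Sec4:prop1}, splitting on bounded versus unbounded $\xi_k$, then invoking Lemma~\ref{lem4.21} or linear theory for the limit---is exactly the paper's approach. The one place your plan diverges is the mechanism in Case~(i): you propose to first pass to the limit in the full min/max inequalities and then apply a shifted cutting lemma, whereas the paper bypasses the intermediate min/max entirely and argues \emph{directly} that $u_\infty$ is a viscosity solution of $-\Delta_{p,\xi_\infty}^N u_\infty = 0$. Your route has a wrinkle: the coefficients $a_k$ carry no uniform modulus of continuity, so a limiting min/max equation with some $a_\infty$ is not well defined; the paper's direct argument sidesteps this because only the lower bound $1+a_k(x)\ge 1$ is ever used. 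More substantively, the stability step you flag as the ``main obstacle'' is indeed the heart of the matter: at test functions with $D\varphi(x_0)+\xi_\infty=0$ the operator is discontinuous, and the paper handles this not by standard stability but by a concrete perturbation $\Phi(x)=\varphi(x)+\delta\,|P_T(x-x_0)|$, where $P_T$ projects onto the span of eigenvectors of $D^2\varphi(x_0)$ with nonnegative eigenvalues, which forces the shifted gradient away from zero while keeping control of the Hessian. This is the missing ingredient in your plan; once you insert it (or equivalently, carry out the shifted cutting lemma with the same care at degenerate points), the rest of your argument goes through exactly as you wrote it.
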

\begin{proof}
We prove by contradiction. Suppose that there exists a sequence of function $ \{u_{j}\}_{j=1}^{\infty} $ with $ ||u_{j}||_{L^{\infty}(B_{1})} \leq \frac{1}{2}   $, and sequences of function $ \{f_{j}\}_{j=1}^{\infty}, \{a_{j}\}_{j=1}^{\infty}$ with $ ||f_{j}||_{L^{\infty}(B_{1})}\leq \frac{1}{j}  $, and a sequence of vector $ \{\xi_{j}\}_{j=1}^{\infty}   $ such that $ \{u_{j}\}_{j=1}^{\infty} $ is viscosity sub-solution to
\begin{align}\label{loc:eq21}
\min \big \{-\Delta_{p,\xi_{j}}^{\rm{N}}u_{j}, -|Du_{j}+\xi_{j}|^{a_{1}}\Delta_{p,\xi_{j}}^{\rm{N}}u_{j}, -|Du_{j}+\xi_{j}|^{a_{2}}\Delta_{p,\xi_{j}}^{\rm{N}}u_{j} \big \} = \frac{1}{j}
\end{align}
and $ \{u_{j}\}_{j=1}^{\infty} $ is viscosity super-solution to
\begin{align}\label{loc:eq22}
\max \big \{-\Delta_{p,\xi_{j}}^{\rm{N}}u_{j}, -|Du_{j}+\xi_{j}|^{a_{1}}\Delta_{p,\xi_{j}}^{\rm{N}}u_{j}, -|Du_{j}+\xi_{j}|^{a_{2}}\Delta_{p,\xi_{j}}^{\rm{N}}u_{j} \big \} = -\frac{1}{j},
\end{align}
but for any $ \vec{q} \in \mathbb{R}^{n}  $ and $ \rho \in (0,1)$, the following inequality holds
\begin{equation}\label{loc:eq23}
\mathop{osc}\limits_{B_{\rho}}\big(u_{j}-\vec{q}\cdot x\big) >\frac{1}{2}\rho.
\end{equation}
By Proposition \ref{Sec4:prop1} and the compactness argument, we have that
$$ u_{j} \rightarrow u_{\infty} \ \ \text{locally uniformly in $ B_{1} $}. $$
Now it remains to show that $ u_{\infty} \in C^{1,\widehat{\beta}}_{loc}(B_{1})   $ for some $ \widehat{\beta} \in(0,1)$. Next, we treat separately the cases when $ \{\xi_{j}\}_{j=1}^{\infty}   $ is unbounded or bounded.

Case 1: $ \{\xi_{j}\}_{j=1}^{\infty}   $ is an unbounded sequence. We can extract a subsequence from $ e_{j}:= \frac{\xi_{j}}{|\xi_{j}|}  $ such that $ e_{j}  \rightarrow e_{\infty} $ with $ |e_{\infty}|=1   $. Next our goal is to prove $ u_{\infty} $ is a viscosity solution to
\begin{equation}\label{loc:eq24}
-\Delta u_{\infty} -(p-2)\big \langle D^{2}u_{\infty}e_{\infty},e_{\infty} \big \rangle =0 \ \ \text{in} \ \ B_{1}.
\end{equation}
We prove that $ u_{\infty} $ is a viscosity super-solution of \eqref{loc:eq24}.
Let $ \varphi$ be a function in $C^{2}(B_{1})$ such that $ u_{\infty}-\varphi   $ attains its local minimum at $ x_{0} \in B_{1}    $.
We assume by contradiction that
\begin{equation}\label{loc:eq25}
-\Delta \varphi(x_{0}) -(p-2)\big \langle D^{2}\varphi(x_{0})e_{\infty},e_{\infty} \big \rangle <0.
\end{equation}
There exists a sequence $ \{x_{j}\}_{j\in \mathbb{N}}$ such that $ x_{j} \rightarrow x_{0}   $ and $ u_{j}-\varphi $ reaches a local minimum at $ x_{j} \in B_{1}   $. Then by the equation \eqref{loc:eq22} satisfied by $ u_{j} $ in the viscosity sense, we have
\begin{align}\label{loc:eq26}
\begin{split}
\max \big \{ - & \Delta_{p,\xi_{j}}^{\rm{N}}\varphi(x_{j}), -|D\varphi(x_{j})+\xi_{j}|^{a_{1}}\Delta_{p,\xi_{j}}^{\rm{N}}\varphi(x_{j}), -|D\varphi(x_{j})+\xi_{j}|^{a_{2}}\Delta_{p,\xi_{j}}^{\rm{N}}\varphi(x_{j}) \big \}   \\
& \geq -\frac{1}{j}.
\end{split}
\end{align}
Noticing that $ x_{j} \rightarrow x_{0} $ and $ e_{j}  \rightarrow e_{\infty} $, then from \eqref{loc:eq25}, for sufficiently large $ j $, we deduce
\begin{equation}\label{loc:eq27}
-\Delta_{p,\xi_{j}}^{\rm{N}}\varphi(x_{j}) <0,
\end{equation}
which implies that \eqref{loc:eq26} can be written as
\begin{align}\label{loc:eq28}
-\Delta_{p,\xi_{j}}^{\rm{N}}\varphi(x_{j}) \geq -\frac{j^{-1}}{E^{*}},
\end{align}
where
$$ E^{*} := \min \big \{1, |D\varphi(x_{j})+\xi_{j}|^{a_{1}},|D\varphi(x_{j})+\xi_{j}|^{a_{2}} \big \}.
$$
Taking $ j $ large enough, we have $ |D\varphi(x_{j})+\xi_{j}| >1 $. Then from the expression of $ E^{*} $ and \eqref{loc:eq28}, we obtain
$$ -\Delta \varphi(x_{0}) -(p-2)\big \langle D^{2}\varphi(x_{0})e_{\infty},e_{\infty} \big \rangle \geq 0 , $$
which contradicts with \eqref{loc:eq25}. Hence, we have proved that $ u_{\infty} $ is a viscosity super-solution of \eqref{loc:eq24}.
The proof of $u_\infty$ being a viscosity sub-solution of \eqref{loc:eq24} is similar. Therefore, $ u_{\infty} $ is a viscosity solution to \eqref{loc:eq24}.

From Remark \ref{rk6}, we see that \eqref{loc:eq24} is a linear uniformly elliptic equation with constant coefficients. Then invoking the regularity result in \cite[Corollary 5.7]{CC95}, we derive $ u_{\infty} \in C^{1,\widetilde{\alpha}}_{loc}(B_{1})   $ for some $ 0< \widetilde{\alpha} <1 $.

Case 2: $ \{\xi_{j}\}_{j=1}^{\infty}$ is a bounded sequence, which allows us to select a subsequence $\{\xi_{j_k}\}_{k=1}^\infty$ from $ \{\xi_{j}\}_{j=1}^{\infty} $ such that $ \xi_{j_k} \rightarrow \xi_{\infty}$ as $k\rightarrow +\infty$. For convenience, we still denote the subsequence $\{\xi_{j_k}\}_{k\in \mathbb{N}}$ by $\{\xi_j\}_{j\in \mathbb{N}}$. In the following, we shall prove $ u_{\infty} $ is a viscosity solution to
\begin{equation}\label{loc:eq29}
-\Delta u_{\infty} -(p-2)\bigg \langle D^{2}u_{\infty}\frac{Du_{\infty}+\xi_{\infty}}{|Du_{\infty}+\xi_{\infty}|},\frac{Du_{\infty}+\xi_{\infty}}{|Du_{\infty}+\xi_{\infty}|} \bigg \rangle =0\ \text{in} \ B_{1}.
\end{equation}
To this aim,  let $ \varphi \in C^{2}(B_{1}) $ be a function such that $ u_{\infty}-\varphi   $ attains its local minimum at $ x_{0} \in B_{1}    $. For simplicity, we suppose that
$$ \varphi(x) = \frac{1}{2}(x-x_{0})^{T}A(x-x_{0}) +\vec{b} \cdot (x-x_{0}) +u_{\infty}(x_{0}).$$
Now we distinguish two cases according to $ |\vec{b}+\xi_{\infty}| >0    $ and $ |\vec{b}+\xi_{\infty}| =0 $. For the case when $ |\vec{b}+\xi_{\infty}| >0    $, we assume by contradiction that
\begin{equation}\label{loc:eq30}
-\Delta \varphi(x_{0}) -(p-2)\bigg \langle D^{2}\varphi(x_{0})\frac{\vec{b}+\xi_{\infty}}{|\vec{b}+\xi_{\infty}|},\frac{\vec{b}+\xi_{\infty}}{|\vec{b}+\xi_{\infty}|} \bigg \rangle <0.
\end{equation}
We can carry out the same procedure as Case 1 above. In fact, taking $ j $ large enough, we arrive at $ |D\varphi(x_{j})+\xi_{j}| \geq\frac{1}{2}|\vec{b}+\xi_{\infty}| >0 $, then combining \eqref{loc:eq26}, \eqref{loc:eq27} and \eqref{loc:eq28}, we derive
 $$ -\Delta \varphi(x_{0}) -(p-2)\bigg \langle D^{2}\varphi(x_{0})\frac{\vec{b}+\xi_{\infty}}{|\vec{b}+\xi_{\infty}|},\frac{\vec{b}+\xi_{\infty}}{|\vec{b}+\xi_{\infty}|} \bigg \rangle \geq 0, $$
 which contradicts to \eqref{loc:eq30}. Hence $ u_{\infty}$ is a viscosity super-solution to \eqref{loc:eq29}. Similarly, we can prove that $ u_{\infty} $ is a viscosity sub-solution to \eqref{loc:eq29}. Therefore, \eqref{loc:eq29} is proved.

For the case when $ |\vec{b}+\xi_{\infty}| =0$, there are two possibilities, namely,

Case $ 2a $: $ \vec{b}+\xi_{\infty} =0, |\vec{b}|, |\xi_{\infty}|>0 $.

Case $ 2b $: $ |\vec{b}|=|\xi_{\infty}|=0 $.

We first consider Case $ 2a $. If there exists a subsequence $ \{x_{j'}\} $ such that $ |D\varphi(x_{j'})+\xi_{j'}| >0 $, then we can repeat process above to show \eqref{loc:eq29} holds. If such a subsequence does not exist, we want to show that
\begin{equation}\label{loc:eq31}
-\Delta\varphi(x_{0}) -(p-2)\lambda_{\min}(D^{2}\varphi(x_{0}))\geq0, \ \ p\geq 2,
\end{equation}
and
\begin{equation}\label{loc:eq32}
-\Delta \varphi(x_{0}) -(p-2)\lambda_{\max}(D^{2}\varphi(x_{0}))\geq 0, \ \ 1<p<2.
\end{equation}
In the following, we only consider the proof of \eqref{loc:eq31}, since the proof of \eqref{loc:eq32} is analogous. We assume by contradiction that
\begin{equation}\label{loc:eq33}
-\Delta \varphi(x_{0}) -(p-2)\lambda_{\min}(D^{2}\varphi(x_{0}))< 0, \ \ p\geq 2,
\end{equation}
which implies that $ A $ has at least one positive eigenvalue. Otherwise, we see that $ \text{Tr}A +(p-2)\lambda_{\min}(A)\leq0 $, which is a contradiction to \eqref{loc:eq33}. Now we define $ \mathbb{R}^{n} = T \oplus S $, where $ T =\text{span}\{ e_{1},e_{2},\cdots,e_{k}\}$ is a linear subspace composed of those eigenvectors corresponding to nonnegative eigenvalues of $ A $. For $ \delta >0 $, we let
\begin{equation*}
\label{eq4.2998}
\Phi(x)= \varphi(x) + \delta |P_{T}(x-x_{0})|,
\end{equation*}
where $ P_{T}(x) $ denotes the orthogonal projection over $ T $. Noticing that $ u_{j} \rightarrow u_{\infty} $ locally uniformly in $ B_{1} $ and $ u_{\infty}-\varphi   $ attains its local minimum at $ x_{0} \in B_{1}    $, then for $ \delta >0 $ sufficiently small, $ u_{j}-\Phi $ attains a local minimum at some $ x_{j}^{\delta} \in B_{r}(x_{0}), 0<r<1 $. This allows us to select a subsequence $ x_{j}^{\delta} $ such that $ x_{j}^{\delta} \rightarrow \widehat{x} $ for some $ \widehat{x} \in B_{r}(x_{0}) $. Now we treat separately the cases where $ |P_{T}(x_{j}^{\delta}-x_{0})|=0 $ and $ |P_{T}(x_{j}^{\delta}-x_{0})|>0 $.

Case 2a.1: $ |P_{T}(x_{j}^{\delta}-x_{0})|=0 $. By the definition of $ |P_{T}(x_{j}^{\delta}-x_{0})| $, we have
\begin{equation*}
\label{eq4.2999}
\Phi(x)= \varphi(x) + \delta e \cdot P_{T}(x-x_{0}),
\quad \forall e \in \mathbb{S}^{n-1}.
\end{equation*}
Then direct calculation yields that
\begin{equation}\label{loc:eq34}D\Phi(x_{j}^{\delta})=A(x_{j}^{\delta}-x_{0})+\vec{b}+\delta P_{T}(e) \ \ \text{and} \ \ D^{2}\Phi(x_{j}^{\delta})=A.
\end{equation}
Here we choose $ e \in \mathbb{S}^{n-1}(|e|=1) $ such that $ P_{T}(e)=e $ provided that  $ e \in \mathbb{S}^{n-1} \cap T $, otherwise, $ P_{T}(e) = 0 $ provided $ e \in \mathbb{S}^{n-1} \cap T^{\bot} $, where $ T^{\bot} $ is the subspace orthogonal to $ T $. First, we consider

Case 2a.1.1: $ A(\widehat{x}-x_{0}) =0 $. For $ j $ large enough, we have
\begin{equation*}
|A(x_{j}^{\delta}-x_{0})+\vec{b}+\xi_{j}| \leq \frac{1}{4}\delta,
\end{equation*}
which, using the triangle inequality, reaches
\begin{equation}\label{loc:eq35}
|A(x_{j}^{\delta}-x_{0})+\vec{b}+\xi_{j}+\delta e| \geq \frac{3}{4}\delta.
\end{equation}
For simplicity, we denote
\begin{equation}\label{loc:eq36}
\left\{
     \begin{aligned}
     &  K_{j}=\frac{A(x_{j}^{\delta}-x_{0})+\vec{b}+\delta e+\xi_{j}}{|A(x_{j}^{\delta}-x_{0})+\vec{b}+\delta e+\xi_{j}|},        \\
     & K=\frac{A(\widehat{x}-x_{0})+\vec{b}+\delta e+\xi_{\infty}}{|A(\widehat{x}-x_{0})+\vec{b}+\delta e+\xi_{\infty}|},         \\
     \end{aligned}
     \right.
\end{equation}
then it is easy to see that $ K_{j} \rightarrow K  $ as $ j \rightarrow +\infty $, which, together with \eqref{loc:eq34} and \eqref{loc:eq36}, leads to
\begin{align}\label{loc:eq37}
\begin{split}
-\Delta_{p,\xi_{j}}^{\rm{N}}\Phi(x_{j}^{\delta})&= -\text{Tr}A-(p-2)\bigg\langle A \frac{K_{j}}{|K_{j}|}, \frac{K_{j}}{|K_{j}|}   \bigg\rangle     \\
 & \leq -\text{Tr}A -(p-2)\lambda_{\min}(A)\overset{\eqref{loc:eq33}}{<}0,
 \end{split}
\end{align}
where we have used the fact
\begin{equation*}
\text{Tr}\bigg( \left(\frac{K}{|K|}\otimes \frac{K}{|K|}\right)A \bigg) \geq \lambda_{\min}(A).
\end{equation*}
We combine \eqref{loc:eq22} and \eqref{loc:eq35} to read
\begin{equation*}
\label{eq4.304}
-\Delta_{p,\xi_{j}}^{\rm{N}}\Phi(x_{j}^{\delta}) \geq -\frac{j^{-1}}{\min\big\{1, \big(\frac{3\delta}{4}\big)^{a_{1}}, \big(\frac{3\delta}{4}\big)^{a_{2}}\big\}},
\end{equation*}
which, passing to the limit $ j \rightarrow +\infty $, leads to a contradiction with \eqref{loc:eq37}. Next we consider

Case 2a.1.2: $ A(\widehat{x}-x_{0}) \neq 0 $. For $ j $ large enough, we have
\begin{equation}\label{loc:eq38}
|A(x_{j}^{\delta}-x_{0})|\geq \frac{1}{4}|A(\widehat{x}-x_{0})|.
\end{equation}
In view of $ \vec{b}+\xi_{\infty} =0, |\vec{b}|, |\xi_{\infty}|>0 $ and $ \xi_{j} \rightarrow \xi_{\infty}  $, we obtain
\begin{equation}\label{loc:eq39}
|\vec{b}+\xi_{j}| \leq \frac{1}{16}|A(\widehat{x}-x_{0})|.
\end{equation}
Choosing $ \delta $ sufficiently small, we get $ \delta \leq \frac{1}{16}|A(\widehat{x}-x_{0})| $. This together with \eqref{loc:eq38} and \eqref{loc:eq39} can deduce
\begin{equation*}
|A(x_{j}^{\delta}-x_{0})+\vec{b}+\xi_{j}+\delta e| \geq \frac{1}{8}|A(\widehat{x}-x_{0})|.
\end{equation*}
As the discussions in Case 2a.1.1 above, it holds
\begin{equation*}
\label{eq4.308}
-\Delta_{p,\xi_{j}}^{\rm{N}}\Phi(x_{j}^{\delta}) \geq -\frac{j^{-1}}{\min\big\{1, \big(\frac{1}{8}|A(\widehat{x}-x_{0})|\big)^{a_{1}}, \big(\frac{1}{8}|A(\widehat{x}-x_{0})|\big)^{a_{2}}\big\}},
\end{equation*}
which, passing to the limit $ j \rightarrow +\infty $, also reaches a contradiction with \eqref{loc:eq37}.

Case 2a.2: $ |P_{T}(x_{j}^{\delta}-x_{0})|>0 $. Simple computation yields that
\begin{equation*}
\left\{
\begin{aligned}
     & D\Phi(x_{j}^{\delta})=A(x_{j}^{\delta}-x_{0})+\vec{b}+\delta D\big(|P_{T}(x_{j}^{\delta}-x_{0})|\big),       \\
     &  D^{2}\Phi(x_{j}^{\delta}) =A+\delta D^{2}\big(|P_{T}(x_{j}^{\delta}-x_{0})|\big) .        \\
     \end{aligned}
     \right.
\end{equation*}
For simplicity, we denote
\[\left\{ {\begin{array}{*{20}{c}}
B_{j}=D^{2}\big(|P_{T}(x_{j}^{\delta}-x_{0})|\big),\\
B= D^{2}\big(|P_{T}(\widehat{x}-x_{0})|\big)  ,
\end{array}} \right. \  \text{and} \
\left\{ {\begin{array}{*{20}{c}}
 E_{j}=A(x_{j}^{\delta}-x_{0})+\vec{b}+\delta D\big(|P_{T}(x_{j}^{\delta}-x_{0})|\big)+ \xi_{j}, \\
E=A(\widehat{x}-x_{0})+\vec{b}+\delta D\big(|P_{T}(\widehat{x}-x_{0})|\big)+ \xi_{j},
\end{array}} \right.\]
then it is not difficult to see that $ B_{j} \rightarrow B  $ and $ E_{j} \rightarrow E  $ as $ j \rightarrow +\infty $, respectively. Now we can rewrite
\begin{align}\label{loc:eq40}
\begin{split}
&-\Delta_{p,\xi_{j}}^{\rm{N}}\Phi(x_{j}^{\delta})    \\
&=\underbrace{-\text{Tr}A-(p-2)\bigg \langle A\frac{E_{j}}{|E_{j}|}, \frac{E_{j}}{|E_{j}|}\bigg \rangle }_{:=\Sigma_{1}} -\underbrace{\delta \bigg\{ \text{Tr}B_{j}+(p-2)\bigg \langle B_{j}\frac{E_{j}}{|E_{j}|}, \frac{E_{j}}{|E_{j}|}\bigg \rangle\bigg\}}_{:=\Sigma_{2}}.
\end{split}
\end{align}
Noting that $ \text{Tr}\bigg( \left(\frac{E}{|E|}\otimes \frac{E}{|E|}\right)A \bigg) \geq \lambda_{\min}(A) $, then we see the term $ \Sigma_{1} <0 $. Additionally, noting that $ |P_{T}(x-x_{0})| $ is smooth and convex in a small neighbourhood of $ x_{j}^{\delta} $, which implies that $ B_{j} $ is a nonnegative definite, thereby leading to $ \Sigma_{2} \geq 0 $. Now we conclude that $-\Delta_{p,\xi_{j}}^{\rm{N}}\Phi(x_{j}^{\delta})<0 $. Thereafter, we can follow the process of Case 2a.1.1 and Case 2a.1.2 to derive a contradiction to \eqref{loc:eq37}.

For Case $ 2b $, its proof is very analogous to the Case 2a. We skip the details.

As has been stated above, we have that $ u_{\infty} $ is a viscosity solution to \eqref{loc:eq29}. Using Lemma \ref{lem4.21}, we conclude that $ u_{\infty} \in C^{1,\beta_{1}}_{loc}(B_{1}) $ for some $ \beta_{1} \in (0,1) $. Now setting $ \widehat{\beta} =\min \big\{\widetilde{\alpha},\beta_{1}\big\}$, then we derive $ u_{\infty} \in C^{1,\widehat{\beta}}_{loc}(B_{1}) $, which means that for any $ \rho \in (0,1) $, there exists $ \vec{q'} \in \mathbb{R}^{n}    $ such that
\begin{equation*}
 \mathop {osc}\limits_{B_{\rho}}\big(u_{\infty}-\vec{q'}\cdot x\big) \leq C\rho^{1+\widehat{\beta}}.
\end{equation*}
Choosing $ \rho $ such that $ C\rho^{\widehat{\beta}} \leq \frac{1}{2} $, then we have
\begin{equation*}
 \mathop {osc}_{B_{\rho}}\big(u_{\infty}-\vec{q'}\cdot x\big) \leq    \frac{1}{2}\rho,
\end{equation*}
which contradicts to \eqref{loc:eq23}.
\end{proof}

\subsection{Iteration}\label{Section 4.3}
Based on the improvement of flatness lemma, we shall establish an oscillation control at discrete scales.
\begin{Lemma}
\label{lem4.31}
Suppose that $ u $ is a viscosity sub-solution to \eqref{1.10} and $ u $ is a viscosity super-solution to \eqref{1.11}. There exist $ 0 < \rho \ll 1 $ and $ \delta > 0 $, which are the same as the Lemma \ref{lem4.22}, such that if $ ||u||_{L^{\infty}(B_{1})} \leq \frac{1}{2}$ and $ ||f||_{L^{\infty}(B_{1})} \leq \delta   $, then there exists $ \alpha' \in (0,\frac{1}{1+a_{2}}]   $ such that for all $ j \in \mathbb{R} $, there exists $ \xi_{j} \in  \mathbb{R}^{n}  $ such that
\begin{equation*}
\label{eq4.180}
\mathop {osc}\limits_{B_{\rho^{j}}} (u(x)-\xi_{j}\cdot x) \leq \rho^{j(1+\alpha')}.
\end{equation*}
\end{Lemma}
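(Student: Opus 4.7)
The plan is to argue by induction on $j\in\mathbb{N}$, iterating the improvement-of-flatness result Lemma~\ref{lem4.22}. For the base case $j=0$, I would take $\xi_0=0$; the normalization $\|u\|_{L^\infty(B_1)}\le 1/2$ gives $\mathop{osc}\limits_{B_1}u\le 1=\rho^{0(1+\alpha')}$, so the statement is trivial at scale $1$.

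For the inductive step, assume the estimate holds at scale $\rho^j$ with vector $\xi_j$. The key move is to introduce the rescaled function
$$v(y):=\rho^{-j(1+\alpha')}\bigl(u(\rho^j y)-\xi_j\cdot \rho^j y-c_j\bigr),\qquad y\in B_1,$$
where $c_j$ is a constant chosen so that $\|v\|_{L^\infty(B_1)}\le 1/2$; the inductive oscillation bound is exactly what makes this normalization possible, and the constant shift does not affect the equation. A direct computation gives
$$Du(\rho^j y)=\xi_j+\rho^{j\alpha'}Dv(y),\qquad D^2u(\rho^j y)=\rho^{-j(1-\alpha')}D^2v(y),$$
whence $\Delta_p^N u(\rho^j y)=\rho^{-j(1-\alpha')}\Delta_{p,\tilde\xi}^N v(y)$ with $\tilde\xi:=\rho^{-j\alpha'}\xi_j$. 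Substituting into \eqref{1.10}--\eqref{1.11}, I would check that $v$ becomes a viscosity sub-solution of \eqref{Intro:eqN} and super-solution of \eqref{Intro:eqM} (with shift $\tilde\xi$) whose right-hand side is bounded by
$$\max\!\bigl\{\rho^{j(1-\alpha')},\ \rho^{j(1-\alpha'(1+a_1))},\ \rho^{j(1-\alpha'(1+a_2))}\bigr\}\|f\|_{L^\infty(B_1)}.$$
Choosing $\alpha'\le 1/(1+a_2)$ makes every exponent nonnegative, so the rescaled source remains bounded by $\|f\|_{L^\infty(B_1)}\le\delta$, and $v$ satisfies the hypotheses of Lemma~\ref{lem4.22}.

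Applying Lemma~\ref{lem4.22} to $v$ produces some $q\in\mathbb{R}^n$ with $\mathop{osc}\limits_{B_\rho}(v-q\cdot y)\le\rho/2$. Setting $\xi_{j+1}:=\xi_j+\rho^{j\alpha'}q$ and unwinding the change of variables $x=\rho^j y$, I obtain
$$\mathop{osc}\limits_{B_{\rho^{j+1}}}\bigl(u(x)-\xi_{j+1}\cdot x\bigr)\le \tfrac{1}{2}\rho^{j(1+\alpha')+1}.$$
For the induction to close, the right-hand side must be $\le\rho^{(j+1)(1+\alpha')}$, which reduces to $\tfrac{1}{2}\rho^{-\alpha'}\le 1$, i.e.\ $\alpha'\le\log 2/\log(1/\rho)$. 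I would therefore fix once and for all
$$\alpha':=\min\!\left\{\frac{1}{1+a_2},\ \frac{\log 2}{\log(1/\rho)}\right\}\in\bigl(0,\tfrac{1}{1+a_2}\bigr],$$
with $\rho$ supplied by Lemma~\ref{lem4.22}, and the induction runs to completion.

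The main obstacle will be the rigorous verification, at the level of viscosity test functions, that $v$ genuinely inherits the shifted min/max equations \eqref{Intro:eqN}--\eqref{Intro:eqM}: one must track how each of the three terms inside the $\min$ (respectively $\max$) in \eqref{1.10}/\eqref{1.11} transforms under the rescaling, how the degeneracy factors $|Du|^{a_i}$ absorb the factor $\rho^{j\alpha'a_i}$, and how $\Delta_p^N u$ converts into $\Delta_{p,\tilde\xi}^N v$. The constraint $\alpha'\le 1/(1+a_2)$ is then forced precisely by the need to balance the gradient-degeneracy scaling with the Hessian scaling for the worst exponent $a_2$, while the upper bound $\log 2/\log(1/\rho)$ reflects the geometric decay factor produced by Lemma~\ref{lem4.22}.
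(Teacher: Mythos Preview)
Your proposal is correct and follows essentially the same approach as the paper: both proceed by induction, rescale via $u_k(y)=\rho^{-k(1+\alpha')}(u(\rho^k y)-\xi_k\cdot\rho^k y)$, verify that the rescaled function satisfies the shifted min/max equations \eqref{Intro:eqN}--\eqref{Intro:eqM} with right-hand side controlled by $\rho^{k[1-(1+a_2)\alpha']}\|f\|_{L^\infty}\le\delta$ when $\alpha'\le 1/(1+a_2)$, apply Lemma~\ref{lem4.22}, and close the induction via the constraint $\tfrac{1}{2}\le\rho^{\alpha'}$. Your explicit formula $\alpha'=\min\{1/(1+a_2),\log 2/\log(1/\rho)\}$ and the harmless additive constant $c_j$ are minor cosmetic differences.
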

\begin{proof}
We argue by induction. Firstly, for the case $ j=0$, it is  obvious that the conclusion is true by $ ||u||_{L^{\infty}(B_{1})} \leq \frac{1}{2}$. Suppose the conclusion holds for the case $ j=k $, now we are going to consider the case $ j=k+1 $.

Let
\begin{equation*}
\label{eq4.181}
u_{k}(x) = \frac{u(\rho ^{k}x)-\xi_{k}\cdot(\rho ^{k}x)}{\rho ^{k(1+\alpha')}},
\end{equation*}
then direct calculation yields that $ u_{k} $ is a viscosity sub-solution to
\begin{equation*}
\label{eq4.191}
\min \bigg \{-\Delta_{p,\overline{\xi}_{k}}^{\rm{N}}u_{k}, -H_{k}(Du_{k},\overline{\xi}_{k};a_{1})\Delta_{p,\overline{\xi}_{k}}^{\rm{N}}u_{k}, -H_{k}(Du_{k},\overline{\xi}_{k};a_{2})\Delta_{p,\overline{\xi}_{k}}^{\rm{N}}u_{k}  \bigg \} = ||\widetilde{f}||_{L^{\infty}(B_{1})},
\end{equation*}
and $ u_{k} $ is a viscosity super-solution to
\begin{equation*}
\label{eq4.201}
\max \bigg \{-\Delta_{p,\overline{\xi}_{k}}^{\rm{N}}u_{k}, -H_{k}(Du_{k},\overline{\xi}_{k};a_{1})\Delta_{p,\overline{\xi}_{k}}^{\rm{N}}u_{k}, -H_{k}(Du_{k},\overline{\xi}_{k};a_{2})\Delta_{p,\overline{\xi}_{k}}^{\rm{N}}u_{k}  \bigg \} = -||\widetilde{f}||_{L^{\infty}(B_{1})},
\end{equation*}
where
\begin{equation*}
\label{eq4.17}
\left\{
     \begin{aligned}
     & H_{k}(Du_{k},\overline{\xi}_{k};a_{i}): = |Du_{k}+\overline{\xi}_{k}|^{a_{i}},\ i=1,2   ,   \\
     &  \overline{\xi}_{k}:=\rho^{-k\alpha'}\xi_{k},     \\
     &   ||\widetilde{f}||_{L^{\infty}(B_{1})} := \max\big\{1,\rho^{-ka_{1}\alpha'},\rho^{-ka_{2}\alpha'}\big\}\rho^{k(1-\alpha')}||f||_{L^{\infty}(B_{1})}  .    \\
     \end{aligned}
     \right.
\end{equation*}
Noticing that $ 0 < \rho \ll 1 $ and $ 0<a_{1} \leq a_{2} <\infty$, we infer
$$ ||\widetilde{f}||_{L^{\infty}(B_{1})} \leq \rho ^{k[1-(1+a_{2})\alpha']}||f||_{L^{\infty}(B_{1})} \leq \delta ,  $$
where $ 0< \alpha' \leq\frac{1}{1+a_{2}}$ is used.

Applying Lemma \ref{lem4.22} to $ u_{k} $, then there exists $ \vec{q}_{k+1} \in  \mathbb{R}^{n}  $ such that
$$ \mathop {osc}\limits_{B_{\rho}}(u_{k}-\vec{q}_{k+1} \cdot x) \leq \frac{1}{2}\rho ,    $$
which means that
\begin{equation}\label{loc:eq41}
 \mathop   {osc}\limits_{B_{\rho^{k+1}}}\big(u(x)-\xi_{k+1} \cdot x\big) \leq \frac{1}{2}\rho^{1+k(1+\alpha')} ,
\end{equation}
where
\begin{equation*}
\xi_{k+1} =\xi_{k}+\vec{q}_{k+1}\rho^{k\alpha'}.
\end{equation*}
In light of $ \alpha' \in (0,\frac{1}{1+a_{2}}]   $, we can take $ \alpha' $ such that
\begin{equation}\label{loc:eq42}
 \frac{1}{2} \leq \rho^{\alpha'}.
\end{equation}
Finally, we combine \eqref{loc:eq41} and \eqref{loc:eq42} to end the induction.
\end{proof}

Once Lemma \ref{lem4.31} is available, we can finish the proof of Theorem \ref{pro2} in a straightforward way. In effect, whenever $ r \in (0,1] $, there exists $ j \in \mathbb{N} \cup \{0\} $ such that $ \rho^{j+1} < r \leq \rho^{j}   $. Thus we have
\begin{align*}
\mathop {osc}_{B_{r}} (u-\xi_{j}\cdot x) \leq  \mathop {osc}_{B_{\rho^{j}}} (u-\xi_{j}\cdot x) \leq \rho^{j(1+\alpha')}& = \rho^{(j+1)(1+\alpha')} \rho^{-(1+\alpha')} \leq \rho^{-(1+\alpha')} r^{1+\alpha'}    \\
& = C(n,p,a_{1},a_{2}) r^{1+\alpha'},
\end{align*}
which implies that $ u $ is $ C^{1,\alpha'}(0) $. Then by a standard translation argument to conclude the proof of Theorem \ref{pro2}.

\subsection{A corollary}\label{Section 4.4}
Finally, we conclude this section by giving a direct consequence of Theorem \ref{pro2}, which will be frequently used in the proof of Theorem \ref{thm2}.

\begin{Corollary}\label{Section4:Coro1}
 Suppose $ u\in C(B_{1}) $ is a viscosity sub-solution to
$$ \min_{i=0,1,\cdots,M} \bigg \{-\big(|Du|^{\alpha_{i}(x)} + a(x)|Du|^{\beta_{i}(x)} \big)   \Delta_{p}^{\rm{N}} u \bigg\}  = ||f||_{L^{\infty}(B_{1})} \ \ \text{in} \ \ B_{1}      $$
and is also a viscosity super-solution to
$$ \max_{i=0,1,\cdots,M} \bigg \{-\big(|Du|^{\alpha_{i}(x)} + a(x)|Du|^{\beta_{i}(x)} \big)\Delta_{p}^{\rm{N}} u \bigg\} = -||f||_{L^{\infty}(B_{1})} \ \ \text{in} \ \ B_{1},  $$
where $ \alpha_{i}(x), \beta_{i}(x) $ are given in \eqref{intro:eq115}--\eqref{intro:eq118}, $ i=0,1,\cdots,M $. Then $ u \in C^{1,\alpha'}_{loc}(B_{1}) $ for some $ 0 < \alpha' <1 $.
\end{Corollary}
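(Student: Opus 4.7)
The plan is to reduce Corollary~\ref{Section4:Coro1} directly to Theorem~\ref{pro2}. Specifically, I would show that any viscosity sub-solution of the variable-exponent $\min$-equation in the statement is automatically a viscosity sub-solution of~\eqref{1.10}, and analogously any super-solution of the corresponding $\max$-equation is a super-solution of~\eqref{1.11}. Once this reduction is in place, Theorem~\ref{pro2} hands over the local $C^{1,\alpha'}$ regularity with no further work.

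The entire content of the reduction is one pointwise algebraic comparison. Setting
\[
c_i(q,x):=|q|^{\alpha_i(x)}+a(x)|q|^{\beta_i(x)}, \qquad \tilde c_k(q,x):=(1+a(x))|q|^{a_k},\ k\in\{1,2\},
\]
the bound $a_1\le\alpha_i(x),\beta_i(x)\le a_2$ together with the monotonicity of $r\mapsto r^s$ in $s$ (whose direction reverses across $r=1$) gives
\[
c_i(q,x)\ge\tilde c_1(q,x)\ \text{if}\ |q|\ge 1, \qquad c_i(q,x)\ge\tilde c_2(q,x)\ \text{if}\ |q|\le 1.
\]
This single inequality absorbs the variable-exponent structure completely and is the only nontrivial input I would need.

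Given this, for a test function $\varphi\in C^{2}$ touching $u$ from above at $x_{0}$, the sub-solution hypothesis of the corollary supplies some index $i^{*}$ with $-c_{i^{*}}(D\varphi(x_{0}),x_{0})\,\Delta_{p}^{N}\varphi(x_{0})\le\|f\|_{L^{\infty}(B_{1})}$. I would then split on the sign of $\Delta_{p}^{N}\varphi(x_{0})$: when $-\Delta_{p}^{N}\varphi(x_{0})\le 0$, the first entry of the $\min$ in~\eqref{1.10} is already non-positive and hence at most $\|f\|_{L^{\infty}(B_{1})}$; when $-\Delta_{p}^{N}\varphi(x_{0})>0$, the comparison $\tilde c_{k}\le c_{i^{*}}$, with $k$ selected according to whether $|D\varphi(x_{0})|\ge 1$ or $|D\varphi(x_{0})|<1$, yields
\[
-\tilde c_{k}(D\varphi(x_{0}),x_{0})\,\Delta_{p}^{N}\varphi(x_{0})\ \le\ -c_{i^{*}}(D\varphi(x_{0}),x_{0})\,\Delta_{p}^{N}\varphi(x_{0})\ \le\ \|f\|_{L^{\infty}(B_{1})},
\]
placing the corresponding second or third entry of the $\min$ below $\|f\|_{L^{\infty}(B_{1})}$. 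The super-solution reduction for~\eqref{1.11} is strictly symmetric, with the same dichotomy triggered by the opposite sign of $-\Delta_{p}^{N}\varphi(x_{0})$.

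The only subtlety I expect to warrant care is the degenerate touching $D\varphi(x_{0})=0$, at which $\Delta_{p}^{N}\varphi$ is not classically defined. Under the viscosity convention recalled in Section~\ref{Section 2}, however, both $\tilde c_{1}$ and $\tilde c_{2}$ vanish together with $c_{i^{*}}$ at such a point, so the second and third entries of the $\min$ in~\eqref{1.10} (respectively of the $\max$ in~\eqref{1.11}) collapse to zero and the required viscosity inequality becomes automatic. Apart from this routine bookkeeping, no compactness, approximation, or scaling argument beyond what Theorem~\ref{pro2} already provides should be necessary.
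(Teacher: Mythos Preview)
Your proposal is correct and follows essentially the same route as the paper. The paper's own proof simply refers back to the argument of Theorem~\ref{thm1} in Section~\ref{Section 5}, which is precisely the sign-of-$\Delta_{p}^{N}\varphi(x_{0})$ split together with the $|D\varphi(x_{0})|\gtrless 1$ dichotomy that you describe; your explicit treatment of the touching $D\varphi(x_{0})=0$ is additional care the paper does not spell out.
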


\begin{proof}
This proof is exactly identical to the proof of Theorem \ref{thm1} in Section \ref{Section 5}, using the standard argument.
\end{proof}

\section{Proof of Theorem \ref{thm1} }\label{Section 5}
    ~~~~In this section, we provide a simple proof of Theorem \ref{thm1} by using Theorem \ref{pro2}.
\begin{proof}[Proof of Theorem~\ref{thm1}]
Assume $ \varphi \in C^{2}(B_{1}) $ and $ u-\varphi $ has a local maximum at $ x_{0} \in B_{1} $, then by the definition of viscosity solution, we have
\begin{equation*}
\label{eq3.31}
-\bigg( |D\varphi(x_{0})|^{\alpha(x_{0},u(x_{0}))} +a(x_{0})|D\varphi(x_{0})|^{\beta(x_{0},u(x_{0}))}\bigg)\Delta_{p}^{\rm{N}} \varphi(x_{0}) \leq f(x_{0}).
\end{equation*}
Now we divide the discussion into two cases.

Case 1: $ \Delta_{p}^{\rm{N}} \varphi(x_{0}) \geq 0 $. It is obvious that
\begin{equation*}
\label{eq3.32}
-|D\varphi(x_{0})|^{a_{i}} \Delta_{p}^{\rm{N}} \varphi(x_{0}) \leq ||f||_{L^{\infty}(B_{1})}, \ \ i=1,2.
\end{equation*}

Case 2: $ \Delta_{p}^{\rm{N}} \varphi(x_{0}) \leq 0 $. By direct calculations, one of the following inequalities must hold
$$ -|D\varphi(x_{0})|^{a_{1}} \Delta_{p}^{\rm{N}} \varphi(x_{0}) \leq ||f||_{L^{\infty}(B_{1})}  ,  $$
$$ -|D\varphi(x_{0})|^{a_{2}} \Delta_{p}^{\rm{N}} \varphi(x_{0}) \leq ||f||_{L^{\infty}(B_{1})},$$
under either the condition $ |D\varphi(x_{0})| \geq 1  $ or the condition $ |D\varphi(x_{0})| \leq 1 $.

Combining the above two cases, we derive that $ u $ is a viscosity sub-solution to
$$ \min_{i=0,1,2} \bigg \{-|Du|^{a_{i}} \Delta_{p}^{\rm{N}}u \bigg \}=||f||_{L^{\infty}(B_{1})} .$$
Similarly, we see that $ u $ is a viscosity super-solution to
$$ \max_{i=0,1,2} \bigg \{ -|Du|^{a_{i}}\Delta_{p}^{\rm{N}}u \bigg \}=-||f||_{L^{\infty}(B_{1})}, $$
where $ a_{0} = 0 $. Using Theorem \ref{pro2}, we immediately obtain $ u \in C^{1,\alpha'}_{loc}(B_{1}) $  and the desired estimate \eqref{eq4}.
\end{proof}

\vspace{3mm}

\section{Proof of Theorem \ref{thm2}}\label{Section 6}
   ~~~~In this section, we shall give the proof of Theorem \ref{thm2}. In Section \ref{Section 6.1}, we reduce the proof to the case when $ ||u||_{L^{\infty}(B_{1})} \leq 1 $ and $ ||f||_{L^{\infty}(B_{1})} \leq \delta $ for some positive constant $ \delta $. In Section \ref{Section 6.2}, we present an approximation lemma. In Section \ref{Section 6.3}, an improvement of iteration is given. Finally, we conclude the proof of Theorem \ref{thm2} via Proposition \ref{prop6.3}.

\subsection{Reduction of the problem}\label{Section 6.1}
In fact, suppose $ u $ is a viscosity solution to \eqref{intro:eq1}, then it is not difficult to recognize that $ u $ is a viscosity sub-solution to
\begin{equation}\label{the:eq1}
\min_{i=0,1,\cdots,M} \bigg \{-\bigg(|Du|^{\alpha_{i}(x)} +    a(x)|Du|^{\beta_{i}(x)} \bigg)   \Delta_{p}^{\rm{N}} u \bigg\}  = ||f||_{L^{\infty}(B_{1})}
\end{equation}
and is also a viscosity super-solution to
\begin{equation}\label{the:eq2}
\max_{i=0,1,\cdots,M} \bigg \{-\bigg(|Du|^{\alpha_{i}(x)} + a(x)|Du|^{\beta_{i}(x)} \bigg)\Delta_{p}^{\rm{N}} u \bigg\} = -||f||_{L^{\infty}(B_{1})}.
\end{equation}
Now we will show that a simple scaling reduces the proof of the problem to the case that
\begin{equation}\label{the:eq3}
||u||_{L^{\infty}(B_{1})} \leq 1 \ \ \text{and} \ \ ||f||_{L^{\infty}(B_{1})} \leq  \delta
\end{equation}
for some positive constant $ \delta $ to be determined. More precisely, we have the following proposition.
\begin{Proposition}
\label{prop6.1}
Assume \eqref{the:eq1}--\eqref{the:eq3} hold and $u$ satisfies
$$  ||u||_{C^{1,\tau}(x_{0})} \leq C . $$
Then Theorem \ref{thm2} holds.
\begin{proof}
For fixed $ x_{0} \in B_{1} $, we let
\begin{equation*}
v(x) = \frac{u(x_{0}+\widehat{\tau} x)}{K},
\end{equation*}
where
$$ \widehat{\tau} = \delta^{\frac{1}{2+\min\limits_{i=0,1,\cdots,M}\inf\limits_{B_{1}}\alpha_{i}(x)}} \ \ \text{and} \ \ K=1+||u||_{L^{\infty}(B_{1})} +||f||_{L^{\infty}(B_{1})}^{\frac{1}{1+\min\limits_{i=0,1,\cdots,M}\inf\limits_{B_{1}}\alpha_{i}(x)}}  .  $$
Then a direct calculation yields that $ v $ is a viscosity sub-solution to
\begin{equation*}
\min_{i=0,1,\cdots,M} \bigg\{ -\bigg(|Dv|^{\widetilde{\alpha}_{i}(x)} +\widetilde{a}(x)|Dv|^{\widetilde{\beta_{i}}(x)}\bigg) \Delta_{p}^{\rm{N}}v\bigg\} = ||\widetilde{f}||_{L^{\infty}(B_{1})}
\end{equation*}
and it is also a viscosity super-solution to
\begin{equation*}
\max_{i=0,1,\cdots,M} \bigg\{ -\bigg(|Dv|^{\widetilde{\alpha}_{i}(x)} +\widetilde{a}(x)|Dv|^{\widetilde{\beta_{i}}(x)}\bigg) \Delta_{p}^{\rm{N}}v\bigg\} = -||\widetilde{f}||_{L^{\infty}(B_{1})},
\end{equation*}
where
\begin{equation*}
\left\{
     \begin{aligned}
     & \widetilde{\alpha_{i}}(x) :=\alpha_{i}(x_{0}+\widehat{\tau} x), \  \widetilde{\beta_{i}}(x):=\beta_{i}(x_{0}+ \widehat{\tau} x); \\
      & \widetilde{a}(x):= a(x_{0}+\widehat{\tau} x)\left(\frac{K}{\widehat{\tau}}\right)^{\widetilde{\beta_{i}}(x)-\widetilde{\alpha}_{i}(x)};                     \\
&  \widetilde{f}(x):= \max_{i=0,1,\cdots,M} \bigg\{  \frac{\widehat{\tau}^{2+\widetilde{\alpha}_{i}(x)}}{K^{1+\widetilde{\alpha}_{i}(x)}}\bigg\}||f||_{L^{\infty}(B_{1})}.  \\
     \end{aligned}
     \right.
\end{equation*}

Noticing that
\begin{equation*}
||v||_{L^{\infty}(B_{1})} \leq 1 \ \ \text{and} \ \ ||\widetilde{f}||_{L^{\infty}(B_{1})} \leq \delta
\end{equation*}
provided $ \delta $ is sufficiently small.

Applying our assumptions, it follows
$$ ||v||_{C^{1,\tau}(x_{0})} \leq C ,    $$
which implies that
$$ ||u||_{C^{1,\tau}(x_{0})} \leq C \left( 1+||u||_{L^{\infty}(B_{1})} +||f||_{L^{\infty}(B_{1})}^{\gamma}  \right),    $$
where
$$  \gamma = \frac{1}{1+\min\limits_{i=0,1,\cdots,M}\inf\limits_{B_{1}}\alpha_{i}(x)}.    $$
This completes the proof of Proposition \ref{prop6.1}.
\end{proof}
\end{Proposition}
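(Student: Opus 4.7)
The plan is a standard scaling argument centered at the base point $x_0$ that reduces Theorem \ref{thm2} to the normalized regime \eqref{the:eq3}. Setting
$$v(x) := \frac{u(x_0+\hat\tau x)}{K}$$
for parameters $\hat\tau,K>0$ to be calibrated, I would first record how the min/max viscosity inequalities \eqref{the:eq1}--\eqref{the:eq2} transform. Because $\Delta_p^N$ is zero-homogeneous in its gradient argument, $\Delta_p^N u(y)=(K/\hat\tau^2)\Delta_p^N v(x)$, while each degenerate multiplier pulls out a power $(K/\hat\tau)^{\alpha_i}$ or $(K/\hat\tau)^{\beta_i}$. Dividing through by $(K/\hat\tau)^{\tilde\alpha_i}$, $v$ satisfies the same min/max structure with translated exponents $\tilde\alpha_i(x)=\alpha_i(x_0+\hat\tau x)$, $\tilde\beta_i(x)=\beta_i(x_0+\hat\tau x)$, a modified coefficient $\tilde a(x)=a(x_0+\hat\tau x)(K/\hat\tau)^{\tilde\beta_i-\tilde\alpha_i}$, and rescaled source $\tilde f(x)=(\hat\tau^{2+\tilde\alpha_i}/K^{1+\tilde\alpha_i})f(x_0+\hat\tau x)$.

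Next I would calibrate $\hat\tau$ and $K$ so that $\|v\|_{L^\infty(B_1)}\le 1$ and $\|\tilde f\|_{L^\infty(B_1)}\le \delta$. Writing $\underline\alpha:=\min_i\inf_{B_1}\alpha_i(x)$, the worst-case power of $\hat\tau/K$ in the source corresponds to $\tilde\alpha_i=\underline\alpha$, which suggests the choice
$$\hat\tau=\delta^{1/(2+\underline\alpha)},\qquad K=1+\|u\|_{L^\infty(B_1)}+\|f\|_{L^\infty(B_1)}^{1/(1+\underline\alpha)}.$$
With these values (and $\delta$ small enough that $\hat\tau<1$), both normalizations hold simultaneously. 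I would also check that the characteristic-function decomposition \eqref{intro:eq115} and the modulus-of-continuity assumptions \eqref{intro:eq117}--\eqref{intro:eq118} survive the affine change of variables $y\mapsto x_0+\hat\tau x$, which is routine since $\omega$ is merely precomposed with a fixed contraction.

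Finally, the hypothesized pointwise bound $\|v\|_{C^{1,\tau}(0)}\le C$ unwinds via the chain rule: $|Du(y)-Du(x_0)|\le (CK/\hat\tau^{1+\tau})|y-x_0|^\tau$ on $B_{\hat\tau/2}(x_0)$, with analogous factors of $K$ and $K/\hat\tau$ for the $L^\infty$ pieces of $u$ and $Du$. Since $\hat\tau$ depends only on the universal constant $\delta$ (hence on $n,p,a_1,a_2$), those factors absorb into the constant and produce
$$\|u\|_{C^{1,\tau}(x_0)}\le C\left(1+\|u\|_{L^\infty(B_1)}+\|f\|_{L^\infty(B_1)}^{\gamma}\right),\qquad \gamma=\frac{1}{1+\underline\alpha},$$
which is exactly \eqref{eq16}. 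The main technical point I expect to confront is the factor $(K/\hat\tau)^{\tilde\beta_i-\tilde\alpha_i}$ that appears inside $\tilde a$: it remains bounded for each fixed pair $(u,f)$ because $\tilde\beta_i-\tilde\alpha_i\le a_2-a_1$, but it does depend on $K$, and one must ensure that the abstract regularity hypothesis on $v$ applies no matter how large this factor is, which is legitimate precisely because the normalized statement \eqref{the:eq1}--\eqref{the:eq3} only requires $\tilde a$ to be a nonnegative continuous function.
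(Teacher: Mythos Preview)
Your proposal is correct and follows essentially the same scaling argument as the paper's own proof, with the identical choices of $\hat\tau$ and $K$. In fact you supply more justification than the paper does: you explain why the zero-homogeneity of $\Delta_p^N$ produces the factor $K/\hat\tau^2$, you identify the worst-case exponent $\underline\alpha$ that governs the source bound, and you flag the dependence of $\tilde a$ on $K$ (a point the paper does not comment on).
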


\subsection{Approximation}\label{Section 6.2}
Here we shall use our regularity result from Corollary \ref{Section4:Coro1} to show that the solutions to \eqref{the:eq1} and \eqref{the:eq2} can be approximated by normalized $ p$-harmonic function in a $ C^{1}_{loc} $ fashion.
\begin{Proposition}\label{prop6.2}
Assume \eqref{the:eq1}--\eqref{the:eq2} hold. For given $ 0< \epsilon <1 $, there exists $ \delta >0 $, depending only on $ n, p, a_{1}, a_{2}$, such that if $ ||u||_{L^{\infty}(B_{1})} \leq 1    $ with $ ||f||_{L^{\infty}(B_{1})} \leq \delta   $, then there exists a function $ v $ satisfying
$$ -\Delta_{p}^{\rm{N}} v=0 \ \ \text{in} \ \ B_{r}\ (0<r<1)  $$
in the viscosity sense, and
\begin{equation*}
\label{eq6.22}
 ||u-v||_{L^{\infty}(B_{r})} \leq \epsilon \ \ \text{and} \ \ ||Du-Dv||_{L^{\infty}(B_{r})} \leq \epsilon .
\end{equation*}
\begin{proof}
We prove by contradiction. Suppose that there exist $ \{u_{k}\}_{k}$, $\{f_{k}\}_{k}$,$ \{\alpha_{i}^{k}\}_{k}$, $\{\beta_{i}^{k}\}_{k} $,$ \{a_{k}\}_{k}$ and $ \epsilon_{0} \in (0,1)    $ such that the following conclusions hold:

(A1). $ ||u_{k}||_{L^{\infty}(B_{1})} \leq 1 $ and $ ||f_{k}||_{L^{\infty}(B_{1})} \leq \frac{1}{k}    $ ;

(A2). $ 0<a_{1}\leq \alpha_{i}^{k}(x) \leq \beta_{i}^{k}(x)\leq a_{2} <+\infty $, $ i=0,1,\cdots,M $ ;

(A3). For any function $ v $ satisfing
$$ -\Delta_{p}^{\rm{N}} v=0 \ \ \text{in} \ \ B_{r}  $$
in the viscosity sense, we have
\begin{equation}\label{the:eq4}
 \max \bigg \{ ||u_{k}-v||_{L^{\infty}(B_{r})}, ||Du_{k}-Dv||_{L^{\infty}(B_{r})}\bigg \} > \epsilon_{0} .
\end{equation}
By definition, we have that $ \{u_{k}\}_{k} $ is a viscosity sub-solution to
\begin{equation*}
 \min_{i=0,1,\cdots,M} \bigg \{-\bigg(|Du_{k}|^{\alpha_{i}^{k}(x)} + a_{k}(x)|Du_{k}|^{\beta_{i}^{k}(x)} \bigg)   \Delta_{p}^{\rm{N}} u_{k} \bigg\}  = ||f_{k}||_{L^{\infty}(B_{1})}
\end{equation*}
and it is also a viscosity super-solution to
\begin{equation*}
\max_{i=0,1,\cdots,M} \bigg \{-\bigg(|Du_{k}|^{\alpha_{i}^{k}(x)} + a_{k}(x)|Du_{k}|^{\beta_{i}^{k}(x)} \bigg)\Delta_{p}^{\rm{N}} u_{k} \bigg\} = -||f_{k}||_{L^{\infty}(B_{1})},
\end{equation*}
which, together with (A1) and (A2), yield that $ u_{k} \in C^{1,   \alpha'}_{loc}(B_{1})   $ for some $ \alpha' \in (0,1)$, where Corollary \ref{Section4:Coro1} is used. Then from the  Arzel$ \grave{a}$-Ascoli theorem, it follows that
\begin{equation}\label{the:eq5}
 u_{k} \rightarrow u_{\infty} \ \text{uniformly in} \ \  B_{r} \ \ \text{and} \ \   Du_{k} \rightarrow Du_{\infty}\ \  \text{uniformly in} \ \  B_{r}.
\end{equation}
In addition, from Lemma \ref{lem2.2} and (A1), the limit function $ u_{\infty} $ is a viscosity solution to
\begin{equation}\label{6.666}
-\Delta_{p}^{\rm{N}} u_{\infty}=0 \ \ \text{in} \ \ B_{r}.
\end{equation}
Then \eqref{the:eq4}, \eqref{the:eq5} and \eqref{6.666} lead to a contradiction.
\end{proof}
\end{Proposition}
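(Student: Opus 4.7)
The natural strategy is a contradiction-compactness argument that combines the uniform regularity of Corollary \ref{Section4:Coro1} with the cutting lemma (Lemma \ref{lem2.2}). The plan is to suppose the conclusion fails: there exist $\epsilon_0 \in (0,1)$ and sequences $\{u_k\}, \{f_k\}, \{\alpha_i^k\}, \{\beta_i^k\}, \{a_k\}$ satisfying the structural hypotheses with $\|u_k\|_{L^\infty(B_1)} \le 1$ and $\|f_k\|_{L^\infty(B_1)} \le 1/k$, such that no viscosity solution $v$ of $-\Delta_p^N v = 0$ in $B_r$ satisfies both $\|u_k - v\|_{L^\infty(B_r)} \le \epsilon_0$ and $\|Du_k - Dv\|_{L^\infty(B_r)} \le \epsilon_0$. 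By the definition of viscosity solution, each $u_k$ is a sub-solution of the min-type equation and a super-solution of the max-type equation, both with right-hand side controlled by $1/k$.

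Next I would apply Corollary \ref{Section4:Coro1} to each $u_k$. Since the hypotheses on $\alpha_i^k, \beta_i^k$ involve only the uniform bounds $0 < a_1 \le \alpha_i^k \le \beta_i^k \le a_2$, the resulting $C^{1,\alpha'}$ estimate on any compact subset of $B_1$ is uniform in $k$, with constant depending only on $n,p,a_1,a_2$. Arzel\`a-Ascoli then provides a subsequence along which $u_k \to u_\infty$ and $Du_k \to Du_\infty$ uniformly on $B_r$, for some $u_\infty \in C^1(B_r)$.

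The third step is to pass to the viscosity limit. Standard stability of viscosity sub/super-solutions under uniform convergence, together with $\|f_k\|_{L^\infty} \to 0$ and the continuous dependence of the operators on $(Du, D^2 u)$ away from $\{Du = 0\}$, yields that $u_\infty$ is simultaneously a viscosity sub-solution of the homogeneous min-equation and a viscosity super-solution of the homogeneous max-equation appearing in Lemma \ref{lem2.2}. Invoking that lemma, $u_\infty$ solves $-\Delta_p^N u_\infty = 0$ in $B_r$. Taking $v := u_\infty$ then contradicts the standing assumption, since for $k$ large we have both $\|u_k - v\|_{L^\infty(B_r)} < \epsilon_0$ and $\|Du_k - Dv\|_{L^\infty(B_r)} < \epsilon_0$.

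The principal obstacle is the viscosity passage to the limit. The sub/super-solution stability is standard when the operator is continuous in all its arguments, but here the coefficients $\alpha_i^k(x), \beta_i^k(x), a_k(x)$ vary with $k$ and the equations are structured as a min/max over a finite family. The uniform bounds in \eqref{intro:eq116} keep the family of operators in a compact regime, so after extracting further subsequences, the min/max inequalities persist in the limit; this is precisely what is needed to apply Lemma \ref{lem2.2}, whose role is to collapse the min/max system back to the single normalized $p$-Laplacian. No other delicate point arises, because the uniform $C^{1,\alpha'}$ bound handles both the $L^\infty$ and $C^1$ convergence required by the conclusion.
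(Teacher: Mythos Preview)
Your proposal is correct and follows essentially the same approach as the paper: contradiction via sequences with $\|f_k\|_{L^\infty}\to 0$, uniform $C^{1,\alpha'}$ bounds from Corollary~\ref{Section4:Coro1}, Arzel\`a--Ascoli to extract a $C^1$ limit $u_\infty$, and Lemma~\ref{lem2.2} to identify $u_\infty$ as a normalized $p$-harmonic function, yielding the contradiction. Your additional remark about extracting subsequences of the coefficients and the stability of the min/max structure is a reasonable elaboration of what the paper leaves implicit.
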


\subsection{Improved oscillation-type estimate}\label{Section 6.3}
Thanks to Proposition \ref{prop6.2}, we can derive an improved oscillation-type estimate of solution $ u $ to \eqref{the:eq1} and \eqref{the:eq2} near the set $ \{ x: Du(x)=0\} $. The proof makes use of some ideas from \cite[Lemma 2.5]{JG20} and \cite[Theorem 5.4]{AME17}.

First we establish the first step of the geometric control on the growth of the gradient.

\begin{Lemma}\label{Sec6:lem61}
Under the assumptions in Proposition \ref{prop6.2}, and for every $ 0< \tau' < \widehat{\beta_{0}} $, then there exists universal constant $ 0< \rho \ll 1 $ such that
\begin{equation*}
\label{eq6.32}
\sup_{B_{\rho}(0)} |u(x)-u(0)| \leq \rho^{1+\tau'} +|Du(0)|\rho .
\end{equation*}
\end{Lemma}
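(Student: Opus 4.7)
The plan is to combine the $C^1$-approximation by normalized $p$-harmonic functions from Proposition~\ref{prop6.2} with the nearly optimal $C^{1,\widehat{\beta_0}}$ regularity supplied by Lemma~\ref{lem2.3}. First I would fix $\tau' \in (0, \widehat{\beta_0})$ and let $C_0$ denote a universal constant provided by Lemma~\ref{lem2.3} that bounds $\|w\|_{C^{1,\widehat{\beta_0}}(B_{1/2})}$ whenever $-\Delta_p^N w = 0$ in $B_{1/2}$ and $\|w\|_{L^\infty(B_{1/2})} \leq 2$. Then choose once and for all a radius $\rho \in (0, 1/2)$ small enough that $C_0 \rho^{\widehat{\beta_0} - \tau'} \leq 1/2$; this absorbs the higher-order error coming from Lemma~\ref{lem2.3} into the desired $\rho^{1+\tau'}$ remainder.

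With $\rho$ now fixed, I would set $\epsilon := \rho^{1+\tau'}/6$ and apply Proposition~\ref{prop6.2} with radius $r = 1/2$ to produce $\delta = \delta(\epsilon) > 0$ and a viscosity solution $v$ of $-\Delta_p^N v = 0$ in $B_{1/2}$ satisfying
\[
\|u - v\|_{L^\infty(B_{1/2})} \leq \epsilon \quad \text{and} \quad \|Du - Dv\|_{L^\infty(B_{1/2})} \leq \epsilon.
\]
Since $\|u\|_{L^\infty(B_1)} \leq 1$, we have $\|v\|_{L^\infty(B_{1/2})} \leq 1 + \epsilon \leq 2$, so the $C_0$-estimate applies to $v$ and Taylor expanding at $0$ gives
\[
|v(x) - v(0) - Dv(0) \cdot x| \leq C_0 |x|^{1 + \widehat{\beta_0}} \leq \tfrac{1}{2}\rho^{1+\tau'}, \quad \forall\, x \in B_\rho(0).
\]

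Finally, the triangle inequality together with $|Dv(0)| \leq |Du(0)| + \epsilon$ would give, for every $x \in B_\rho(0)$,
\[
|u(x) - u(0)| \leq 2\epsilon + |Dv(0)|\rho + C_0 \rho^{1+\widehat{\beta_0}} \leq 3\epsilon + \tfrac{1}{2}\rho^{1+\tau'} + |Du(0)|\rho \leq \rho^{1+\tau'} + |Du(0)|\rho,
\]
which is the asserted bound. The one delicate point will be the order of parameter selection: $\rho$ must be fixed first (depending only on $\widehat{\beta_0}$, $\tau'$, and the universal constant $C_0$), then $\epsilon$, and only then $\delta$ is determined by Proposition~\ref{prop6.2}; otherwise the $\delta$ required could be incompatible with the standing hypothesis $\|f\|_{L^\infty(B_1)}\leq \delta$. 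Beyond that there is no deeper obstacle, as the argument is a standard \emph{approximate by the limiting homogeneous equation and transfer its regularity back to $u$} scheme, with all error terms being either linear in $\epsilon$ or of order $\rho^{1+\widehat{\beta_0}}$.
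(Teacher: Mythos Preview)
Your proposal is correct and follows essentially the same approach as the paper: approximate $u$ by a normalized $p$-harmonic function $v$ via Proposition~\ref{prop6.2}, invoke the $C^{1,\widehat{\beta_0}}$ estimate of Lemma~\ref{lem2.3} for $v$, choose $\rho$ so that $C_0\rho^{\widehat{\beta_0}-\tau'}\leq 1/2$, then fix $\epsilon$ in terms of $\rho$ and finish with the triangle inequality. The only cosmetic difference is that the paper first bounds $\sup_{B_\rho}|u(x)-u(0)-Du(0)\cdot x|\leq \rho^{1+\tau'}$ and then adds $|Du(0)|\rho$, whereas you bound $|u(x)-u(0)|$ directly; your explicit discussion of the order of parameter selection is a welcome clarification.
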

\begin{proof}
Let $ \epsilon_{0} >0 $ be determined. From Proposition \ref{prop6.2}, there exists $ \delta_{\epsilon_{0}}>0    $ such that there exists a normalized $ p$-harmonic function $ v $ satisfying
\begin{equation}
\label{App11}
 ||u-v||_{L^{\infty}(B_{r})} \leq \epsilon_{0} \ \ \text{and} \ \ ||Du-Dv||_{L^{\infty}(B_{r})} \leq \epsilon_{0}
\end{equation}
provided $ ||f||_{L^{\infty}(B_{1})} \leq  \delta_{\epsilon_{0}}     $ holds.

Now by choosing
\begin{equation}\label{App12}
0< \epsilon_{0} \leq \frac{1}{6} \rho^{1+\tau'}  \ \ \text{and} \ \ 0<\rho \leq \min \bigg \{r, \left(  \frac{1}{2C}\right)^{\frac{1}{\widehat{\beta_{0}}-\tau'}}  \bigg \} ,
 \end{equation}
then from triangle inequality and (\ref{App11}), it follows that
\begin{align}\label{the:eq23}
\begin{split}
\sup_{B_{\rho}(0)}|u(x)-u(0)-Du(0)\cdot x| \leq &  \sup_{B_{\rho}(0)} |u(x)-v(x)| + \sup_{B_{\rho}(0)}|v(x)-v(0)-Dv(0)\cdot x|      \\
& +\sup_{B_{\rho}(0)}|v(0)-u(0)+(Dv(0)-Du(0))\cdot x|      \\
\leq &  \frac{1}{2} \rho^{1+\tau'} + \sup_{B_{\rho}(0)}|v(x)-v(0)-Dv(0)\cdot x|.
\end{split}
\end{align}
By virtue of Lemma \ref{lem2.3}, we have that $ v \in C^{1,\widehat{\beta_{0}}}_{loc}(B_{1}) $ with the estimate
\begin{equation*}
\sup_{B_{\rho}(0)}|v(x)-v(0)-Dv(0)\cdot x| \leq C \rho^{1+\widehat{\beta_{0}}},
\end{equation*}
which, together with \eqref{App12} and \eqref{the:eq23}, yields that
$$\sup_{B_{\rho}(0)}|u(x)-u(0)-Du(0)\cdot x| \leq \rho^{1+\tau'}. $$
The proof is now complete.
\end{proof}

Next we iterate the previous estimate to control the oscillation of the solutions in $ \rho$-adic balls.

\begin{Proposition}\label{prop6.3}
Under the assumptions in Proposition \ref{prop6.2}, there exists a non-decreasing sequence $ \{ \tau_{k}\} $ and a universal constant $ 0< \rho \ll 1$ such that
\begin{equation}\label{the:eq6}
  \sup_{B_{\rho^{k}}(0)} |u(x)-u(0)| \leq \rho^{k(1+\tau_{k})} +|Du(0)| \sum_{i=0}^{k-1} \rho^{k+i\tau_{k}}.
\end{equation}
\end{Proposition}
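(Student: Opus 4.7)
My plan is to prove the estimate by induction on $k \geq 1$, taking $\tau_k$ to be the constant sequence $\tau_k \equiv \tau' \in (0,\widehat{\beta_0})$ (which is trivially non-decreasing) and $\rho$ the universal radius given by Lemma \ref{Sec6:lem61} for this choice of $\tau'$. The base case $k=1$ is precisely Lemma \ref{Sec6:lem61}. For the inductive step, assume \eqref{the:eq6} holds at scale $k$; I would rescale by setting
\begin{equation*}
M_k := \rho^{k(1+\tau')} + |Du(0)| \sum_{i=0}^{k-1} \rho^{k+i\tau'}, \qquad v(y) := \frac{u(\rho^k y) - u(0)}{M_k},
\end{equation*}
so that $\|v\|_{L^\infty(B_1)} \leq 1$, $v(0)=0$, and $Dv(0) = (\rho^k/M_k)\,Du(0)$.

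A direct computation, exploiting the $0$-homogeneity of the matrix $I + (p-2)\widehat p \otimes \widehat p$ in $p$ (so that $\Delta_p^N u(\rho^k y) = (M_k/\rho^{2k})\Delta_p^N v(y)$), shows that $v$ satisfies a pair of min/max viscosity inequalities of the same shape as \eqref{the:eq1}-\eqref{the:eq2}: the new exponents $\widetilde\alpha_i(y) := \alpha_i(\rho^k y)$ and $\widetilde\beta_i(y) := \beta_i(\rho^k y)$ still satisfy \eqref{intro:eq115}-\eqref{intro:eq118} with modulus of continuity $\omega(\rho^k \cdot) \leq \omega(\cdot)$; the scaling factors $(M_k/\rho^k)^{\widetilde\alpha_i}$ and $(M_k/\rho^k)^{\widetilde\beta_i}$ are absorbed into a bounded nonnegative coefficient $\widetilde a$; and the rescaled source is bounded by $\rho^{k[1-(1+a_2)\tau']}\|f\|_{L^\infty(B_1)} \leq \delta$, provided $\tau' \leq 1/(1+a_2)$ and $\rho$ is chosen small enough. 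I can then apply Lemma \ref{Sec6:lem61} to $v$ to obtain $\sup_{B_\rho(0)}|v(y)-v(0)| \leq \rho^{1+\tau'} + |Dv(0)|\rho$. Undoing the rescaling and using $M_k|Dv(0)|\rho = \rho^{k+1}|Du(0)|$, the right-hand side expands to
\begin{equation*}
\rho^{(k+1)(1+\tau')} + |Du(0)| \sum_{j=1}^{k}\rho^{k+1+j\tau'} + |Du(0)|\rho^{k+1} = \rho^{(k+1)(1+\tau')} + |Du(0)|\sum_{j=0}^{k}\rho^{k+1+j\tau'} = M_{k+1},
\end{equation*}
closing the induction.

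The main obstacle, as I see it, will be carefully verifying that the rescaled function $v$ falls within the scope of Lemma \ref{Sec6:lem61}. The scaling produces the auxiliary multiplicative factors $(M_k/\rho^k)^{\widetilde\alpha_i}$, which are positive and bounded by $1$ but do not have the original structural form of \eqref{the:eq1}-\eqref{the:eq2}. I expect to handle this either by absorbing them into the coefficient $\widetilde a$ while checking that the compactness argument of Proposition \ref{prop6.2} still identifies the limit as a normalized $p$-harmonic function (via Lemma \ref{lem2.2}), or by slightly generalizing Lemma \ref{Sec6:lem61} to allow for bounded positive weights multiplying each degeneracy term. In either case, the balancing condition \eqref{intro:eq118}, reformulated via Remark \ref{rk3} as $k\omega(\rho^k) \leq (\widehat{\beta_0}-\tau')/2$, guarantees that the cumulative loss from freezing the exponents $\alpha_i,\beta_i$ at the origin stays summable, so the non-decreasing (here constant) choice of $\tau_k$ can absorb it without ever reaching $\widehat{\beta_0}$.
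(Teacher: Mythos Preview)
Your induction with the constant choice $\tau_k\equiv\tau'$ is clean and correct: the rescaling step closes exactly as you write, and the source-term bound $\rho^{k[1-(1+a_2)\tau']}\|f\|_{L^\infty}\le\delta$ goes through because $\tau'\le 1/(1+a_2)$. For the proposition \emph{as literally stated} (``there exists a non-decreasing sequence $\{\tau_k\}$\ldots''), this suffices, and it is simpler than the paper's argument. One small point: your last paragraph invokes the balancing condition \eqref{intro:eq118} and Remark~\ref{rk3}, but with $\tau_k$ constant you never freeze the exponents and there is no cumulative drift to control, so that condition plays no role in your proof.

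The paper, however, chooses a genuinely varying sequence
\[
\tau_k=\min_{i=0,\dots,M}\Bigl\{\widehat{\beta_0}^{-},\ \min_{B_{\rho^k}(0)}\frac{1}{1+\alpha_i(x)}\Bigr\}\ \nearrow\ \tau=\min_{i}\Bigl\{\widehat{\beta_0}^{-},\ \frac{1}{1+\alpha_i(0)}\Bigr\},
\]
and this is the whole point of the proposition in context: Theorem~\ref{thm2} needs $\tau_k\to\tau$ with $k(\tau-\tau_k)\to0$ in order to reach the \emph{pointwise} exponent $\tau$, which can exceed $1/(1+a_2)$. With this choice the induction step is more delicate: one must show $\rho^{k(1+\tau_k)+1+\tau'}\le\rho^{(k+1)(1+\tau_{k+1})}$ despite $\tau_{k+1}\ge\tau_k$, and that the gradient sum with exponents $\tau_k$ is dominated by the corresponding sum with exponents $\tau_{k+1}$. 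This is precisely where the balancing condition enters, via $k(\tau_{k+1}-\tau_k)\le k\omega(\rho^k)\le(\widehat{\beta_0}-\tau)/2$ and the choice $\tau'=(\tau+\widehat{\beta_0})/2$. Your constant-$\tau'$ proof sidesteps this entirely, at the cost of only recovering the global exponent of Theorem~\ref{thm1} rather than the sharper local one of Theorem~\ref{thm2}.
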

\begin{proof}
The desired non-decreasing sequence is defined as follows
$$ \tau_{k} := \min_{i=0,1,\cdots,M}\bigg \{\widehat{\beta_{0}}^{-}, \min_{B_{\rho^{k}(0)}} \frac{1}{1+\alpha_{i}(x)}    \bigg \},        $$
which converges to
$$ \tau:= \min_{i=0,1,\cdots,M}\bigg \{\widehat{\beta_{0}}^{-}, \frac{1}{1+\alpha_{i}(0)}    \bigg \} .  $$
Now we argue by finite induction. The case $ k =1 $ is clearly the statement of Lemma \ref{Sec6:lem61}. Suppose we have verified \eqref{the:eq6} for $ j=1,2,\cdots,k $. Letting
\begin{equation*}
\label{eq6.33}
v_{k}(x) = \frac{u(\rho^{k}x)-u(0)}{A_{k}} ,
\end{equation*}
where $ A_{k}:= \rho^{k(1+\tau_{k})} +|Du(0)|\sum_{i=0}^{k-1} \rho^{k+i\tau_{k}}$, a simple calculation yields that $ \{v_{k}\}_{k} $ is a viscosity sub-solution to
\begin{equation*}
\label{eq6.34}
\min_{i=0,1,\cdots,M} \bigg \{ -\bigg(|Dv_{k}|^{\widetilde{\alpha}_{i}(x)} + \widetilde{a}(x)|Dv_{k}|^{\widetilde{\beta_{i}}(x)}    \bigg)\Delta_{p}^{\rm{N}}v_{k}\bigg \}= ||\widetilde{f}||_{L^{\infty}(B_{1})}
\end{equation*}
and is also a viscosity super-solution to
\begin{equation*}
\label{eq6.35}
\max_{i=0,1,\cdots,M} \bigg \{ -\bigg(|Dv_{k}|^{\widetilde{\alpha}_{i}(x)} + \widetilde{a}(x)|Dv_{k}|^{\widetilde{\beta_{i}}(x)}    \bigg)\Delta_{p}^{\rm{N}}v_{k}\bigg \} = - ||\widetilde{f}||_{L^{\infty}(B_{1})},
\end{equation*}
where
$$ \widetilde{f}(x):= \max_{i=0,1,\cdots,M}\bigg\{ \frac{\rho^{k(2+\widetilde{\alpha}_{i}(x))}}{A_{k}^{1+\widetilde{\alpha}_{i}(x)}}\bigg\}||f||_{L^{\infty}(B_{1})}, \ \ \widetilde{a}(x):= a(\rho^{k}x)\left( \frac{A_{k}}{\rho^{k}}\right)^{\widetilde{\beta}_{i}(x)-\widetilde{\alpha}_{i}(x)}     $$
and
$$ \widetilde{\alpha}_{i}(x) := \alpha_{i}(\rho^{k}x), \ \ \widetilde{\beta}_{i}(x) := \beta_{i}(\rho^{k}x).      $$
We observe that $ ||v_{k}||_{L^{\infty}(B_{1})} \leq 1 $ and
\begin{equation*}
\label{eq6.36}
||\widetilde{f}||_{L^{\infty}(B_{1})} \leq \max_{i=0,1,\cdots,M} \big\{  \rho^{k[1-\tau_{k}(1+\alpha_{i}(\rho^{k}x))]}\big\}||f||_{L^{\infty}(B_{1})} \leq  ||f||_{L^{\infty}(B_{1})} \leq \delta,
\end{equation*}
where the definition of $ \tau_{k} $ and inductive hypothesis are used. Then from Lemma \ref{Sec6:lem61}, we have that
\begin{equation*}
\sup_{B_{\rho}(0)}|v_{k}(x)-v_{k}(0)| \leq \rho^{1+\tau'} +|Dv_{k}(0)|\rho ,
\end{equation*}
which implies that
\begin{align}\label{the:eq7}
\begin{split}
& \sup_{B_{\rho^{k+1}}(0)}|u(x)-u(0)| \leq \rho^{1+\tau'}A_{k} +\rho^{k+1}|Du(0)|      \\
 & \ \ \ \ \ \ \ \ \  \ \ \ \ \ \ \ \ \ \ \ \ \ \ \ \ \ = \underbrace{\rho^{k(1+\tau_{k})+1+\tau'}}_{:=I_{1}}+\underbrace{|Du(0)|\left(\rho^{1+k}+\rho^{1+\tau'}\sum_{i=0}^{k-1}\rho^{k+i\tau_{k}}\right)}_{:=I_{2}}.
\end{split}
\end{align}

We first derive the estimate of $I_{1}$. Using \eqref{intro:eq117}, it follows that
\begin{align*}
 k\left(\frac{1}{1+\alpha_{i}(0)}-\frac{1}{1+\max_{B_{\rho^{k}(0)}}\alpha_{i}(x)}\right) & \leq k \left(\max_{B_{\rho^{k}(0)}}\alpha_{i}(x) - \alpha_{i}(0)\right)       \nonumber  \\
 &  \leq k \omega(\rho^{k}) ,
\end{align*}
which means that
\begin{equation}\label{the:eq8}
0 \leq k(\tau-\tau_{k}) \leq k\omega(\rho^{k}).
\end{equation}
This, together with Remark \ref{rk3}, yields that
\begin{equation}\label{the:eq9}
\rho^{k(\tau_{k}-\tau_{k+1})} \leq \rho^{-k\omega(\rho^{k})} \leq \rho^{\frac{\tau-\widehat{\beta_{0}}}{2}}.
\end{equation}
In view of the arbitraries of $ \tau' $, we set $ \tau'  = \frac{\tau+\widehat{\beta_{0}}}{2} < \widehat{\beta_{0}}$. Then from \eqref{the:eq9}, we get
\begin{align}\label{the:eq10}
\begin{split}
  I_{1}& \leq \rho^{(k+1)(1+\tau_{k+1})} \rho^{k(\tau_{k}-\tau_{k+1})+(\tau'-\tau_{k+1})} \\
 & \leq \rho^{(k+1)(1+\tau_{k+1})} \rho^{\frac{\tau-\widehat{\beta_{0}}}{2}+\tau'-\tau_{k+1}}       \\
 & \leq \rho^{(k+1)(1+\tau_{k+1})}\rho^{\tau-\tau_{k+1}} \leq  \rho^{(k+1)(1+\tau_{k+1})} .
 \end{split}
\end{align}

We then derive the estimate of  $I_{2}$. It suffices to show that
\begin{equation}\label{the:eq11}
\rho^{1+k}+\rho^{1+\tau'}\sum_{i=0}^{k-1}\rho^{k+i\tau_{k}} \leq \sum_{i=0}^{k} \rho^{1+k+i\tau_{k+1}},
\end{equation}
where $ \tau'  = \frac{\tau+\widehat{\beta_{0}}}{2} < \widehat{\beta_{0}} $.

Before proving \eqref{the:eq11}, we make the following claim.

$ \textbf{Claim:}$ $ (j-1)\tau_{j} +\tau' \geq j \tau_{j+1} , j=1,2,\cdots,k  .      $

Indeed, from \eqref{the:eq8} and Remark \ref{rk3}, we have that
\begin{equation}\label{the:eq12}
j(\tau_{j+1}-\tau_{j}) \leq \frac{\widehat{\beta_{0}}-\tau}{2} +j(\tau_{j+1}-\tau), \ \  j=1,2,\cdots,k.
\end{equation}
We observe that
\begin{equation}\label{the:eq13}
\frac{\widehat{\beta_{0}}-\tau}{2}+j(\tau_{j+1}-\tau)+\tau_{j}-\tau' \leq 0,
\end{equation}
where $ \tau' $ and $ \tau_{j} \nearrow \tau $ are used. Now we combine \eqref{the:eq12} and \eqref{the:eq13} to deduce
$$ j(\tau_{j+1}-\tau_{j})+\tau_{j}-\tau' \leq 0,  $$
which implies the claim holds.

Recalling that $ \rho \in (0,1)$ and using Claim above, then \eqref{the:eq11} is true. Thus, we derive
\begin{equation}\label{the:eq14}
I_{2} \leq |Du(0)|\sum_{i=0}^{k} \rho^{1+k+i\tau_{k+1}}.
\end{equation}

Combining \eqref{the:eq7}, \eqref{the:eq10} and \eqref{the:eq14}, we end the induction and complete the proof of this proposition.
\end{proof}

With the help of Proposition \ref{prop6.3}, we now proceed with the proof of Theorem \ref{thm2}.

\begin{proof}[Proof of Theorem~\ref{thm2}]

For simplicity, we assume $ x_{0} =0 $. Then our proof is divided into two cases: $|Du(0)|$ is sufficiently small or not.

$\text{Case 1}$. If $  |Du(0)| \leq r^{\tau}  $ for some fixed small $ 0<r<1 $, then there exists $ k \in \mathbb{N} $ such that $ \rho^{k+1} \leq r < \rho^{k}  $. Using Proposition \ref{prop6.3}, we obtain
\begin{align*}
\sup_{B_{r}(0)} |u(x)-u(0)-& Du(0)\cdot x|  \leq \sup_{B_{\rho^{k}}(0)}  |u(x)-u(0)-Du(0)\cdot x|                  \\
 &  \leq  \sup_{B_{\rho^{k}}(0)} |u(x)-u(0)| + |Du(0)|\rho^{k}    \\
 & \overset{\eqref{the:eq6}}{\leq} \rho^{k(1+\tau_{k})} +|Du(0)| \sum_{i=0}^{k-1} \rho^{k+i\tau_{k}}+ |Du(0)|\rho^{k}    \\
 &  \leq   \rho^{k(1+\tau_{k})} \left(1+ |Du(0)|\rho^{-k(1+\tau_{k})}\sum_{i=0}^{k-1} \rho^{k+i\tau_{k}}    \right)  + |Du(0)|\rho^{k}    \\
 &  \leq \rho^{k(\tau_{k}-\tau)}\rho^{k(1+\tau)}\left(1+ r^{\tau-\tau_{k}}\sum_{i=0}^{k-1} \rho^{i\tau_{k}}    \right)  + \frac{r^{1+\tau}}{\rho}   \\
 &   \leq  \rho^{k(\tau_{k}-\tau)} \frac{r^{1+\tau}}{\rho^{1+\tau}}\left(1+ r^{\tau-\tau_{k}}\frac{1}{1-\rho^{\tau_{k}}}\right)+ \frac{r^{1+\tau}}{\rho}   \\
 &  \leq \rho^{k(\tau_{k}-\tau)} \frac{r^{1+\tau}}{\rho^{1+\tau}(1-\rho^{\tau_{k}})}\left(1+r^{\tau-\tau_{k}}\right) + \frac{r^{1+\tau}}{\rho}   \\
 & \leq \rho^{k(\tau_{k}-\tau)} \frac{2r^{1+\tau}}{\rho^{1+\tau}(1-\rho^{\tau_{k}})} + \frac{r^{1+\tau}}{\rho} \leq C(\rho)r^{1+\tau},
\end{align*}
by taking $k$ large enough, where we have used
$$ \limsup_{k\rightarrow \infty} k(\tau_{k}-\tau)=0 \ \ \text{and} \ \ \tau_{k} \nearrow \tau ,     $$
in the eighth inequality. Thereby we have shown $ u $ is $C^{1,\tau}$  at $ 0 $ in this case.

$\text{Case 2}$. If $ |Du(0)| > r^{\tau} $, where $ r $ is the same as Case 1. For simplicity, denoting $ |Du(0)|^{\frac{1}{\tau}} $ by $ r_{1} $, and defining
\begin{equation}\label{the:eq15}
u_{r_{1}}(x) := \frac{u(r_{1}x)-u(0)}{r_{1}^{1+\tau}},
\end{equation}
as we discussed in Case 1, we have
\begin{align}\label{the:eq16}
\begin{split}
\sup_{B_{r_{1}}(0)} |u(r_{1}x)-u(0)| & \leq C r_{1}^{1+\tau}\left(1+|Du(0)|r_{1}^{-\tau_{k}}\right)  \leq  C r_{1}^{1+\tau}(1+r_{1}^{\tau-\tau_{k}})
  \leq  C r_{1}^{1+\tau},
\end{split}
\end{align}
where $ \tau_{k} \nearrow \tau $ is used.

From \eqref{the:eq1}, \eqref{the:eq2} and \eqref{the:eq15}, it follows that $ u_{r_{1}} $ is a viscosity sub-solution to
\begin{equation}\label{the:eq17}
\min_{i=0,1,\cdots,M} \bigg \{ - \bigg(|Du_{r_{1}}|^{\widehat{\alpha}_{i}(x)}  + \widehat{a}(x)|Du_{r_{1}}|^{\widehat{\beta_{i}}(x)}       \bigg)    \Delta_{p}^{\rm{N}} u_{r_{1}}\bigg \} = ||\widehat{f}||_{L^{\infty}(B_{1})}
\end{equation}
and it is a viscosity super-solution to
\begin{equation}\label{the:eq18}
\max_{i=0,1,\cdots,M} \bigg \{ - \bigg(|Du_{r_{1}}|^{\widehat{\alpha}_{i}(x)}  + \widehat{a}(x)|Du_{r_{1}}|^{\widehat{\beta_{i}}(x)}       \bigg)  \Delta_{p}^{\rm{N}} u_{r_{1}}\bigg \} = -||\widehat{f}||_{L^{\infty}(B_{1})},
\end{equation}
where
\begin{equation*}
\left\{
     \begin{aligned}
     &  \widehat{\alpha_{i}}(x):= \alpha_{i}(r_{1}x),\ \widehat{\beta_{i}}(x):= \beta_{i}(r_{1}x)  , \    \widehat{a}(x) := a(r_{1}x)r_{1}^{\tau(\widehat{\beta_{i}}(x)-\widehat{\alpha}_{i}(x))} , \\
     &  \widehat{f}(x):=  \max_{i=0,1,\cdots,M}\big\{ r_{1}^{1-\tau(1+\widehat{\alpha}_{i}(x))}\big\}||f||_{L^{\infty}(B_{1})}.           \\
     \end{aligned}
     \right.
\end{equation*}
We observe that
\begin{equation}\label{the:eq19}
u_{r_{1}}(0)=0, \ \ |Du_{r_{1}}(0)|=1, \ \ ||\widehat{f}||_{L^{\infty}(B_{1})} \leq 1.
\end{equation}
Invoking Corollary \ref{Section4:Coro1}, we obtain
\begin{equation}\label{the:eq20}
 u_{r_{1}} \in C^{1,\alpha'}_{loc}(B_{1}) \ \ \text{and} \ \ ||u_{r_{1}}||_{C^{1,\alpha'}(B_{3/4})} \leq C .
 \end{equation}

Noticing that \eqref{the:eq19} and \eqref{the:eq20} allow us to find a $ 0<\gamma_{0} \ll 1    $ such that
\begin{equation*}
\label{eq621}
0< r_{0} \leq |Du_{r_{1}}(x)| \leq r_{0}^{-1}, \ \ \forall x \in B_{\gamma_{0}}(0) \ \ \text{and} \ \ r_{0} \ \ \text{is small}.
\end{equation*}
From such an estimate and $ a_{1} \leq  \widehat{\alpha_{i}}(x) \leq  \widehat{\beta_{i}}(x) \leq a_{2}    $, we arrive
\begin{equation*}
\label{eq622}
0<C_{1} \leq \min_{i=0,1,\cdots,M} \big \{ |Du_{r_{1}}|^{\widehat{\alpha}_{i}(x)}  + \widehat{a}(x)|Du_{r_{1}}|^{\widehat{\beta_{i}}(x)}\big \} \leq C_{2} < +\infty,
\end{equation*}
which together with \eqref{the:eq17} can obtain
\begin{equation*}
\label{eq623}
-\Delta_{p}^{\rm{N}} u_{r_{1}} \leq \frac{||\widehat{f}||_{L^{\infty}(B_{1})}}{\min_{i=0,1,\cdots,M} \big \{ |Du_{r_{1}}|^{\widehat{\alpha}_{i}(x)}  + \widehat{a}(x)|Du_{r_{1}}|^{\widehat{\beta_{i}}(x)}\big \}}  \leq C.
\end{equation*}
Similarly, we also get
\begin{equation*}
\label{eq624}
-\Delta_{p}^{\rm{N}} u_{r_{1}} \geq -C.
\end{equation*}

From the discussions in Remark \ref{rk6}, we have
\begin{equation*}
\mathcal{M}^{+}_{\lambda,\Lambda}(D^{2}u_{r_{1}}) \geq -C \ \ \text{and} \ \  \mathcal{M}^{-}_{\lambda,\Lambda}(D^{2}u_{r_{1}}) \leq C.
\end{equation*}
Resorting to the classical estimate (see, \cite[Theorem 2]{CA89} and \cite[Section 8.2]{CC95}) for the uniformly elliptic equation, we have $ u_{r_{1}} \in C^{1,\widetilde{\alpha}}_{loc}(B_{\gamma_{0}}(0)) $ for every $\widetilde{\alpha} \in (0,\widehat{\beta_{0}})$, with the estimate
$$ \sup_{B_{r}(0)}\big|u_{r_{1}}(x)-u_{r_{1}}(0)-Du_{r_{1}}(0)\cdot x\big| \leq Cr^{1+\widetilde{\alpha}}, \ \forall r\in (0,c_{0}\gamma_{0}) ,       $$
where $ c_{0} >0 $ is a sufficiently small constant. Scaling back to $ u $ and setting $ \widetilde{\alpha} := \tau $, we get that
\begin{equation}\label{the:eq21}
\sup_{B_{r}(0)} \big | u(y)-u(0)-Du(0)\cdot y \big | \leq Cr^{1+\tau}, \ \forall \  r \in (0,c_{0}r_{1}\gamma_{0}).
\end{equation}
Finally, we consider the case $ r \in (c_{0}r_{1}\gamma_{0},r_{1}) $, a direct calculation yields that
\begin{align}\label{the:eq22}
\begin{split}
\sup_{B_{r}(0)} \big |u(y)-u(0)-Du(0)\cdot y \big | & \leq \sup_{B_{r_{1}}(0)} |u(x)-u(0)|+|Du(0)|r_{1}  \\
 &  \overset{\eqref{the:eq16}}{\leq} Cr_{1}^{1+\tau} +r_{1}^{1+\tau} \leq \frac{C+1}{(c_{0}\gamma_{0})^{1+\alpha}}r^{1+\tau}.
\end{split}
\end{align}
Combining \eqref{the:eq21} with \eqref{the:eq22}, we obain that $ u $ is $ C^{1,\tau}$ at $0$ in this case.

This completes the proof of Theorem \ref{thm2}.
\end{proof}

\noindent {\bf Acknowledgments} The authors would like to express their sincere gratitude to the anonymous referee for his/her useful and constructive comments that greatly improved the manuscript.

\vspace{3mm}

\noindent {\bf Author Contributions} We declare that all authors have reviewed and contributed equally to this manuscript.

\vspace{3mm}

\noindent {\bf Funding} This work was supported by the National Natural Science Foundation of China (No. 12271093) and the Jiangsu Provincial Scientific Research Center of Applied Mathematics (Grant No. BK20233002) and the Start-up Research Fund of Southeast University (No. 4007012503).

\vspace{3mm}

\noindent {\bf Data Availability} No datasets were generated or analyzed during the current study.

\vspace{3mm}

\noindent {\bf \Large Declarations}

\vspace{3mm}

\noindent {\bf Competing Interests} The authors declare no competing interests.

\vspace{3mm}

\noindent {\bf Ethical Approval} Not applicable

\end{sloppypar}
\end{document}